\DeclareMathOperator{\coker}{coker}
\DeclareMathOperator{\Aut}{Aut}
\newcommand{\inv}{{\operatorname{inv}}}
\newcommand{\Gal}{\operatorname{Gal}}
\newcommand{\Hom}{\operatorname{Hom}}
\newcommand{\GL}{\operatorname{GL}}
\newcommand{\SL}{\operatorname{SL}}
\newtheorem{theorem}{Theorem}[section]
\newtheorem{proposition}[theorem]{Proposition}
\newtheorem{corollary}[theorem]{Corollary}
\newtheorem{lemma}[theorem]{Lemma}
\newtheorem{remark}[theorem]{Remark}
\newtheorem{definition}[theorem]{Definition}
\newtheorem{example}[theorem]{Example}
\newtheorem{assumption}[theorem]{Assumption}
\newcommand{\changeurlcolor}[1]{\hypersetup{urlcolor=#1}} 
\newcommand{\thickhline}{%
    \noalign {\ifnum 0=`}\fi \hrule height 1pt
    \futurelet \reserved@a \@xhline
}
\newcolumntype{"}{@{\hskip\tabcolsep\vrule width 1pt\hskip\tabcolsep}}
\newcommand{\Z}{\mathbb{Z}}
\newcommand{\Q}{\mathbb{Q}}
\newcommand{\Spec}{\operatorname{Spec}}
\title{\large{\textbf{ARITHMETIC CHERN-SIMONS THEORY WITH REAL PLACES}}}
\author{\normalsize{JUNGIN LEE, JEEHOON PARK}}
\date{}
\newcommand\shorttitle{ARITHMETIC CHERN-SIMONS THEORY WITH REAL PLACES}
\newcommand\authors{JUNGIN LEE, JEEHOON PARK}
\ifodd\value{page}
\authors
\shorttitle
\begin{document}
\maketitle

\vspace{-10mm}

\begin{abstract}
The goal of this paper is two-fold: we generalize the arithmetic Chern-Simons
theory over totally imaginary number fields studied in \cite{ACST1, ACST} to arbitrary number fields (with real places) and provide
new examples of non-trivial arithmetic Chern-Simons invariant with coefficient 
$\Z/n\Z$ $ (n \geq 2)$ associated to a non-abelian gauge group.
The main idea for the generalization is to use cohomology with compact support (see \cite{ADT}) to deal with real places. Before the results of this paper, non-trivial examples were limited to some non-abelian gauge group with coefficient $\Z/2\Z$ in \cite{ACST} and the abelian cyclic gauge group with coefficient $\Z/n\Z$ in \cite{BCG}. 
Our non-trivial examples (with non-abelian gauge group and general coefficient $\Z/n\Z$) will be given by a simple twisting argument based on examples of \cite{BCG}.
\end{abstract}

\vspace{3mm}

{\small
\noindent \textbf{2010 Mathematics Subject Classification:} 11R04, 11R37, 11R80 (primary), 81T45 (secondary). \\
\textbf{Keywords:} Arithmetic topology, Arithmetic Chern-Simons action, Arithmetic duality theorem, Galois cohomology.}

\tableofcontents

\section{Introduction} \label{Sec1}

Arithmetic topology is an area which studies an analogy between knots in 3-manifold theory and primes in number theory, based on homotopical analogies between them. Such an analogy was first suggested in the 1960's by B. Mazur. Later, M. Kapranov \cite{Kap} and A. Reznikov \cite{Rez} examined the analogy further and proved some interesting results. It was M. Morishita who studied arithmetic topology more systematically in his series of papers including \cite{Mor1},\cite{Mor2}; he also published a wonderful book \cite{Mor3} on the subject.

Chern-Simons theory is a 3-dimensional topological quantum field theory with gauge group.
It has been used to understand knot invariants such as the Jones polynomial.
Then the analogy between knots and primes (arithmetic topology) leads to a question whether there is an arithmetic analogue of Chern Simon theory which can be used to understand number-theoretic invariants further.
It was Minhyong Kim (see \cite{ACST1}) who answered this question when the gauge group is finite (also suggested a way to define the theory when the gauge group is a $p$-adic analytic group).
He defined the arithmetic Chern-Simons action (also called the arithmetic Chern-Simons invariant) by applying the ideas of Dijkgraaf and Witten (see \cite{DW}) on a 3-dimensional topological quantum field theory to an arithmetic curve, namely the spectrum of the ring of integers in an algebraic number field. Then, in \cite{ACST}, H. Chung, D. Kim, M. Kim, H. Yoo together with the second-named author continued to study the theory to provide non-trivial examples of the arithmetic Chern-Simons functional. Also, an intimate connection between the abelian arithmetic Chern-Simons theory and arithmetic linking numbers of prime ideals (number-theoretic invariants) is investigated in \cite{ALN}.

Let us briefly recall the setting of \cite{ACST1, ACST}. Let $F$ be a totally imaginary number field and $\mathcal{O}_F$ the ring of integers of $F$.
Let $X=\Spec (\mathcal{O}_F)$ and $n \geq 2$ be a natural number. 
Assume that $F$ contains a primitive $n$-th roots of unity.
Then there is a canonical isomorphism
\begin{eqnarray} \label{inviso}
H^3(X, \Z/n\Z) \xrightarrow{\inv}  \frac{1}{n}\Z/\Z.
\end{eqnarray}
Denote $\pi^{un} := \Gal(F_{un}/F)$ where $F_{un}$ is the maximal unramified extension of $F$ in an algebraic closure $\overline{F}$. 
Let $A$ be a finite group (called the gauge group).
Consider the space of fields on the space time $X$:
$$
\mathcal{M}(A):=\Hom_{cont}(\pi^{un}, A)/A,
$$ 
where $A$ acts on the target by conjugation.
Fix a group cohomology class $c \in H^3(A, \Z/n\Z)$ where $A$ acts on $\Z/n\Z$ trivially. 
Define the \textbf{arithmetic classical Chern-Simons action (or invariant) without boundary} by the function
$$ CS_c : \mathcal{M}(A) \rightarrow \frac{1}{n}\Z/\Z, \quad [\rho] \mapsto \inv(j^3_{un}(\rho^*(c))). $$
where $j^3_{un}: H^3(\pi^{un}, \Z/n\Z) \to H^3(X, \Z/n\Z)$ is the edge map 
induced from the Hochschild-Serre spectral sequence (see (\ref{jun}) for an alternative precise definition of $j^3_{un}$).
Since $\rho^*(c)$ depends only on the class $[\rho]$, $CS_c$ is well-defined.

A technical reason why $F$ is assumed to be totally imaginary in \cite{ACST1, ACST}
is that the isomorphism (\ref{inviso}) does not hold anymore when $F$ has a real place.
A natural idea for generalizing the arithmetic Chern-Simons theory to an arbitrary
number field is to use cohomology with compact support, since there is a canonical isomorphism
(see Proposition \ref{prop24})
$$
\operatorname{inv}: H_c^3(X, \Z/n\Z) \simeq \frac{1}{n}\Z/\Z
$$ 
for any number field $F$ containing a primitive $n$-th roots of unity.
Based on this isomorphism, we extend the arithmetic classical Chern-Simons action without boundary to an arbitrary number field.
Since we assume that $F$ contains a primitive $n$-th roots of unity\footnote{If $n\geq3$ under this assumption, then $F$ is totally imaginary.}, the actual new case
compared to \cite{ACST} is that $n=2$ and $F$ is a real number field.
But we will state propositions and theorems for arbitrary number fields $F$ and natural numbers $n$ for efficiency of the presentation of proofs and examples.

Let $S_f$ be any finite set of finite primes of $F$, 
$X_{\infty}$ be the set of all real places of $F$, 
$S := S_f \cup X_{\infty}$, 
$\widetilde{S}$ be any set satisfying $S_f \subset \widetilde{S} \subset S$, 
$U_S := \Spec(\mathcal{O}_F\left [ \frac{1}{S_f} \right ])$ and
$\pi_{\widetilde{S}} := \Gal(F_{un}^{\widetilde{S}}/F)$ where $F_{un}^{\widetilde{S}}$ is the maximal extension of $F$ in $\overline{F}$ unramified outside $\widetilde{S}$. 
When working with the arithmetic classical Chern-Simons action with boundary (see (\ref{wb}))
and proving the decomposition formula\footnote{This provides a way of computing the arithmetic classical Chern-Simons action without boundary using a comparison between local unramified and global ramified trivializations of a Galois three cocycle.} (see Section \ref{Sec3}),
the vanishing of the \'etale cohomology group $H^3(U_{S_f}, \Z/n\Z)$ plays an important role.
Another key difference between totally imaginary fields and real number fields lies in the vanishing behavior of $H^3$. If $S_f$ contains places of $F$ above $n$, then $H^3(U_{S_f}, \Z/n\Z)=0$ for an arbitrary number field $F$ but $H^3(U_{\tilde S}, \Z/n\Z)\neq 0$ for a real field $F$ and $\tilde S$ which strictly includes $S_f$. 
Due to this difference, our Chern-Simons action and decomposition formula differ from those in \cite{ACST}.

In Section 5, \cite{ACST}, the authors used the decomposition formula to construct infinitely many totally imaginary number fields $F$ in which the action $CS_c([\rho])$ is non-zero for some $c$ and $\rho$ when $n=2$ and the gauge group $A$ is $\Z/2\Z, \Z/2\Z \times \Z/2\Z,$ or the symmetric group $S_4$. 
We provide analogues of those examples for the totally real case.

In Section 6, \cite{ACST}, they showed non-solvability of a certain case of the embedding problem (biquadratic fields are involved) based on non-vanishing (\cite[Theorem 6.2]{ACST}) of the arithmetic Chern-Simons action with gauge group $V_4$, the Klein four group. Unfortunately, such an arithmetic application is not available in our case with totally real fields.

An example of non-vanishing of $CS_c([\rho])$ for the case $n > 2$ but with the abelian cyclic gauge group $A=\Z/n\Z$ was first provided by the work of F. M. Bleher, T. Chinburg, R. Greenberg, M.  Kakde, G. Pappas and M. J. Taylor, in \cite{BCG}.
 When $A=\Z/n\Z$ for any $n\geq 2$, they found infinitely many totally imaginary number fields $F$ and representations
 $\rho$ such that $CS_c([\rho]) \neq 0$ for a choice of $c$ (see \cite[Theorem 1.2]{BCG}).
 Here we prove that there are infinitely many totally imaginary number fields $F$ such that $CS_c([\rho]) \neq 0$ for some $\rho$ and $c$, when the gauge group is $A=G \rtimes \Z/n \Z$ (the semidirect product of some finite group $G$ by $\Z/n \Z$) for any $n\geq 2$ (See Lemma \ref{lem4b} and Theorem \ref{thm4c}.) For instance, we will give examples of non-vanishing arithmetic Chern-Simons action
 (see Example \ref{ex4e}) where the gauge group is the general linear group $\GL(r, \mathbb{F}_p)$ over the finite field $\mathbb{F}_p$ ($r \geq 2$) and $n=p-1$.

 There exists the method of Zink \cite{ZINK} and Conrad-Masullo \cite{CM} of generalizing the \'etale cohomology of $X=\Spec(\mathcal{O}_F)$ for totally imaginary number fields $F$, which was studied by Mazur \cite{MAZ}, to arbitrary number fields. 
 This provides an alternative way of defining the arithmetic classical Chern-Simons action. In the appendix, we add such a viewpoint. Note that the approach based on cohomology with compact support has a benefit of providing a natural framework to prove the decomposition formula.

The methods of this paper using cohomology with compact support carry over \textit{mutatis mutandis} to the case of global function fields provided $n$ is prime to the characteristic of the field. So there is also a version of 
arithmetic Chern-Simons theory for a finite extension $F$ of the field of rational functions $\mathbb{F}_p(T)$ when $n$ is prime to a prime number $p$.

\vspace{1em}

We briefly explain the contents of each section. In Section \ref{Sub21}, we set up basic definitions
and notations. In Section \ref{Sub22} we review the theory of cohomology with compact support based on Milne's book \cite{ADT}. We define the arithmetic classical Chern-Simons action without boundary (in Section \ref{Sub23}) and with boundary (in Section \ref{Sub24}). Since we do not deal with the quantization of the theory and we will omit the word ``classical" in the body of the paper. Section \ref{Sec3} is devoted to a statement and a proof of the decomposition formula.

Section \ref{Sec4} is about examples. We provide analogous examples to Section 5, \cite{ACST} (in Section \ref{Sub41}) and to Section 6, \cite{ACST} (in Section \ref{Sub44}) 
in the case of totally real number fields with the coefficient $\Z/2\Z$ and the gauge group $A=\Z/2\Z, \Z/2\Z\times \Z/2\Z$, or $S_4$.
In Section \ref{Sub45}, we provide non-vanishing examples with general $n$ and some non-abelian gauge group based on the result of \cite{BCG}. Finally, we add an alternative viewpoint of the arithmetic Chern-Simons theory with real places based on the generalized \'etale cohomology theory developed by Zink and Conrad-Masullo.

\section{Arithmetic Chern-Simons action} \label{Sec2}

In this section, we define the arithmetic Chern-Simons action for an arbitrary number field. To deal with the real places of a number field $F$, we use cohomologies with compact support.

\subsection{Definitions and notations} \label{Sub21}

\noindent Let $F$ be a number field, $X=\Spec(\mathcal{O}_F)$, $G_F := \Gal(\overline{F}/F)$, $\mathfrak{b} : \Spec(\overline{F}) \rightarrow X$ be a geometric point and $\pi :=\pi_1(X, \mathfrak{b})$. 
Then $\pi \cong \Gal(F_{un}^f/F)$ where $F_{un}^f$ is the maximal extension of $F$ in $\overline{F}$ unramified at all finite primes. 
Denote $\pi^{un} := \Gal(F_{un}/F)$ where $F_{un}$ is the maximal unramified extension of $F$ in $\overline{F}$. 
Let $S_f$ be any finite set of finite primes of $F$, 
$X_{\infty}$ be the set of all real places of $F$, 
$S := S_f \cup X_{\infty}$, 
$\widetilde{S}$ be any set satisfying $S_f \subset \widetilde{S} \subset S$, 
$U_S := \Spec(\mathcal{O}_F\left [ \frac{1}{S_f} \right ])$ and  
$\mathfrak{b}_S : \Spec(\overline{F}) \rightarrow U_S$ be a geometric point and
$\pi_{\widetilde{S}} := \Gal(F_{un}^{\widetilde{S}}/F)$ where $F_{un}^{\widetilde{S}}$ is the maximal extension of $F$ in $\overline{F}$ unramified outside $\widetilde{S}$. 
Then $\pi_1(U_S, \mathfrak{b}_S) \cong \pi_S$. 

\noindent Let $p_S : \pi_S \rightarrow  \pi_{S_f}$ and $\kappa_{\widetilde{S}} : \pi_{\widetilde{S}} \rightarrow \pi^{un}$ be natural quotient maps. For each (possibly infinite) prime $v$ of $F$, let $\pi_v := \Gal(\overline{F_v}/F_v)$, $I_v \subset \pi_v$ be the inertia group and 
$\kappa_v : \pi_v \rightarrow G_F \rightarrow \pi^{un}$ 
be given by choices of embeddings $\overline{F} \rightarrow \overline{F_v}$. 
For each $v \in \widetilde{S}$, let 
$i_v = i_{v, \widetilde{S}} : \pi_v \rightarrow G_F \rightarrow \pi_{\widetilde{S}}$ be given by the embeddings $\overline{F} \rightarrow \overline{F_v}$ same as above and $i_v'=i_{v, S}' : \Spec(F_v) \rightarrow \Spec(F) \rightarrow U_S$ be the natural map.  

\noindent For a $\pi_{\widetilde{S}}$-module $M$, let $M_v$ be $M$ equipped with the $\pi_v$-module structure given by $i_v$. Similarly, for an abelian \'etale sheaf $\mathcal{F}$ on $U_S$, denote the abelian \'etale sheaf on $\Spec(F_v)$ induced by $i_v'$ by $\mathcal{F}_v$. For a finite abelian group $G$, its Pontryagin dual $G^D :=\Hom(G, \Q/\Z)$ is isomorphic to $G$. For a Galois group $G=\Gal(L/K)$ and a $G$-module $M$, denote the dual group of $M$ by $M^{\vee} := \Hom_G(M, L^{\times})$. For a locally constant abelian \'etale sheaf $\mathcal{F}$ on $U_S$, its Cartier dual $\mathcal{F}^{\vee} := \underline{\Hom}_{U_S}(\mathcal{F}, \mathbf{G}_m)$ is also locally constant and $\mathcal{F}^{\vee \vee} \cong \mathcal{F}$.

\subsection{Cohomology with compact support} \label{Sub22}

\noindent We write $r_v$ for the restriction map of cochains or cohomology classes from $\pi_{\tilde S}$ to $\pi_v$ (induced by $i_v$) or from $U_S$ to $\Spec(F_v)$ (induced by $i_v'$). Since the category of abelian \'etale sheaves on $\Spec(F_v)$ is equivalent to the category of $\pi_v$-modules (\cite[Proposition 5.7.8]{LF}), the second map can be also viewed as the map of cochains or cohomology classes from $U_S$ to $\pi_v$. 

\noindent Let $C(G,M)$ be the standard inhomogeneous group cochain complex of a group $G$ with values in $M$. For any finite $\pi_{\widetilde{S}}$-module $M$, we consider the complex defined as a mapping fiber:
$$
C_c(\pi_{\widetilde{S}}, M) :=\operatorname{Fiber}[C(\pi_{\widetilde{S}}, M) \to \prod_{v \in {\widetilde{S}}} C(\pi_v, M_v)]
$$
and 
$$
H_c^r(\pi_{\widetilde{S}}, M):=H^r \left(C_c(\pi_{\widetilde{S}}, M)\right).
$$

\noindent Explicitly, 
$$
C^n_c(\pi_{\widetilde{S}}, M)=C^n(\pi_{\widetilde{S}},M)\times\prod_{v\in {\widetilde{S}}} C^{n-1}(\pi_v, M_v)
$$
and $d(a, (b_v)_{v \in \widetilde{S}})=(da, (r_v(a)-db_v)_{v \in \widetilde{S}})$ for $(a,(b_v)_{v \in {\widetilde{S}}})\in C^n_c(\pi_{\widetilde{S}}, M)$. By the definition of $H_c^r(\pi_{\widetilde{S}}, M)$, there is a long exact sequence
$$\cdots \rightarrow  H^r_c(\pi_{\widetilde{S}}, M) \rightarrow  H^r(\pi_{\widetilde{S}}, M) \rightarrow  \prod_{v \in {\widetilde{S}}} H^r(\pi_v, M_v) \rightarrow  H^{r+1}_c(\pi_{\widetilde{S}}, M) \rightarrow \cdots.
$$

\noindent For an abelian \'etale sheaf $\mathcal{F}$ on $U_S$, the complex 
$$
C_c(U_S, \mathcal{F}) :=\operatorname{Fiber}[C(U_S, \mathcal{F}) \to \prod_{v \in S} C(\pi_v, \mathcal{F}_v)]
$$ 
and
$$ 
H^r_c(U_S, \mathcal{F}) :=  H^r \left(C_c(U_S, \mathcal{F})\right)
$$ 
are defined by the same way and the same long exact sequence holds.

\begin{proposition} \label{prop21}
(\cite[Proposition 2.2.6]{ADT}) $H^2_c(U_S, \mathbf{G}_m)=0$, $H^3_c(U_S, \mathbf{G}_m)=\Q/\Z$ and $H_c^r(U_S, \mathbf{G}_m)=0$ for all $r>3$. 
\end{proposition}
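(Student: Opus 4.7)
The plan is to derive all three assertions from the long exact sequence
\begin{equation*}
\cdots \to H^r_c(U_S, \mathbf{G}_m) \to H^r(U_S, \mathbf{G}_m) \to \prod_{v \in S} H^r(\pi_v, \mathbf{G}_m) \to H^{r+1}_c(U_S, \mathbf{G}_m) \to \cdots
\end{equation*}
already exhibited in this subsection from the mapping-fiber definition of $C_c(U_S, \mathbf{G}_m)$. The problem then reduces to comparing the \'etale cohomology of $U_S$ with $\mathbf{G}_m$-coefficients against the Galois cohomology of the completions $F_v$ for $v \in S$, both of which are classically accessible via global class field theory and Artin--Verdier duality.

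First I would record the local input. At a finite place $v$, Hilbert 90 gives $H^1(\pi_v, \mathbf{G}_m)=0$, the local invariant identifies $H^2(\pi_v, \mathbf{G}_m)\cong \Q/\Z$, and $H^r(\pi_v, \mathbf{G}_m)=0$ for $r \geq 3$ because $F_v$ has cohomological dimension $2$. At a real place $v$, Tate cohomology is $2$-periodic in positive degree, so $H^r(\pi_v, \mathbf{G}_m)$ equals $\Z/2\Z$ when $r$ is even and $0$ when $r$ is odd. On the global side, class field theory supplies the fundamental short exact sequence
\begin{equation*}
0 \to H^2(U_S, \mathbf{G}_m) \to \bigoplus_{v \in S} H^2(\pi_v, \mathbf{G}_m) \xrightarrow{\sum \inv_v} \Q/\Z \to 0,
\end{equation*}
while for $r \geq 3$ the natural comparison map $H^r(U_S, \mathbf{G}_m) \to \bigoplus_{v \in X_\infty} H^r(\pi_v, \mathbf{G}_m)$ is an isomorphism; that is, the higher cohomology of $\mathbf{G}_m$ on $U_S$ is supported entirely at the real places.

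Plugging these inputs into the long exact sequence, a short diagram chase delivers each claim. The vanishing $H^2_c(U_S, \mathbf{G}_m)=0$ follows from the injectivity of the Brauer-group map together with $H^1(\pi_v, \mathbf{G}_m)=0$ for every $v \in S$. The identification $H^3_c(U_S, \mathbf{G}_m)\cong \Q/\Z$ is the cokernel of that same map, once one observes that $H^3(U_S, \mathbf{G}_m)\to \prod_{v \in S} H^3(\pi_v, \mathbf{G}_m)$ is injective; this is immediate since both sides only record real contributions, which vanish in odd degree. For $r \geq 4$, the comparison maps in degrees $r-1$ and $r$ are both isomorphisms by the $r \geq 3$ statement above, forcing $H^r_c(U_S, \mathbf{G}_m)=0$.

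The principal difficulty is really a bookkeeping one: $U_S$ does not contain the archimedean places as closed points, so one has to align $H^r(U_S, \mathbf{G}_m)$ with the archimedean local factors $\prod_{v \in X_\infty} H^r(\pi_v, \mathbf{G}_m)$ carefully through the Artin--Verdier formalism, and one has to track how the global Brauer sequence interacts with the real places. This alignment is precisely what \cite[Proposition 2.2.6]{ADT} carries out, and my argument would follow that template.
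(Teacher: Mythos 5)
The paper offers no proof of this proposition: it is quoted verbatim from \cite[Proposition 2.2.6]{ADT}. Your strategy --- feeding local and global computations of $H^*(\cdot,\mathbf{G}_m)$ into the mapping-fiber long exact sequence --- is indeed the natural one (and essentially Milne's), your local inputs are correct, and your treatment of $H^2_c$ and of $H^r_c$ for $r\geq 4$ goes through. The gap is in your global input in degree $3$, and it bites exactly in the case the paper most needs, namely $S_f=\emptyset$, i.e.\ $U_S=X$.

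It is not true that $H^r(U_S,\mathbf{G}_m)$ is supported at the real places for $r=3$: one has $H^3(X,\mathbf{G}_m)\cong \Q/\Z$. For totally imaginary $F$ this is Mazur's computation and is precisely what underlies the isomorphism (1.1) in the introduction; it persists for arbitrary $F$. The correct global statement (\cite[Proposition 2.2.1]{ADT}) is that the Brauer sequence continues as
\begin{equation*}
0 \to \operatorname{Br}(U_S) \to \bigoplus_{v\in S}\operatorname{Br}(F_v) \xrightarrow{\sum \inv_v} \Q/\Z \to H^3(U_S,\mathbf{G}_m)\to 0,
\end{equation*}
with $H^r(U_S,\mathbf{G}_m)\cong\bigoplus_{v\in X_\infty}H^r(\pi_v,\mathbf{G}_m)$ only for $r\geq 4$: the map $\sum\inv_v$ is surjective exactly when $S$ contains a finite place, and when $S=X_\infty$ its image is only $\tfrac12\Z/\Z$ (or $0$ if $F$ is totally imaginary), the missing quotient of $\Q/\Z$ reappearing as $H^3(U_S,\mathbf{G}_m)$. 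With your inputs as written, the long exact sequence would yield $H^3_c(X,\mathbf{G}_m)=0$ for totally imaginary $F$ (where $S=\emptyset$, so $H^3_c=H^3$) and $H^3_c(X,\mathbf{G}_m)=\Z/2\Z$ for totally real $F$ --- both false. Even after correcting the inputs, for $U_S=X$ the sequence only exhibits $H^3_c(X,\mathbf{G}_m)$ as an extension of $\Q/\Z$ by $\coker\bigl(\operatorname{Br}(X)\to\bigoplus_{v\in X_\infty}\Z/2\Z\bigr)$, and identifying this extension with $\Q/\Z$ requires an additional argument. The standard repair is to prove the statement first for $U_{S'}$ with $S'_f\neq\emptyset$ (where your argument is correct) and then transfer it to $X$ via the covariant maps $H^3_c(U_{S'},\mathbf{G}_m)\to H^3_c(X,\mathbf{G}_m)$ and excision --- compare steps (2)--(4) of Section \ref{Sec3}, where the analogous comparison is carried out for $\Z/n\Z$-coefficients.
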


\begin{remark} \label{rmk22}
(\cite[Remark 2.2.8(a)]{ADT}) Let $\mathcal{O}_F^{\times, +}$ be the group of totally positive units of $F$, and $Cl^+(F)$ be the narrow class group of $F$. Then $H^0_c(X, \mathbf{G}_m)=\mathcal{O}_F^{\times, +}$ and $H^1_c(X, \mathbf{G}_m)=Cl^+(F)$.
\end{remark}

\begin{theorem} \label{thm23}
(\cite[Theorem 2.3.1]{ADT}; Artin-Verdier duality) Let $\mathcal{F}$ be a constructible sheaf on an open subscheme $U$ of $X$. The Yoneda pairing 
\begin{equation*}
H^r_c(U,\mathcal{F}) \times Ext_U^{3-r}(\mathcal{F}, \mathbf{G}_{m}) \rightarrow  H^3_c(U, \mathbf{G}_{m}) \cong \Q/\Z
\end{equation*}
is a nondegenerate pairing of finite abelian groups for all $r \in \Z$. 
\end{theorem}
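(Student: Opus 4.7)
The plan is to follow the classical Artin-Verdier strategy: reduce by d\'evissage to the locally constant case, then to $\mathcal{F}=\Z/n\Z$, and finally combine Tate local duality at every place of $S$ with a Poitou-Tate style nine term global exact sequence. Finiteness of both sides comes first. Since $\mathcal{F}$ is constructible it is killed by some integer $n$, so both $H^r_c(U,\mathcal{F})$ and $\Ext_U^{3-r}(\mathcal{F},\mathbf{G}_m)$ are $n$-torsion, and finiteness reduces (via the Kummer sequence and Remark \ref{rmk22}) to finiteness of the narrow class group, of $\mathcal{O}_F^{\times,+}/n$, and of the local cohomology at the finitely many closed points where $\mathcal{F}$ is not locally constant.

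Next comes the d\'evissage. For a closed point $j:\{v\}\hookrightarrow U$ with open complement $i:V\hookrightarrow U$, the short exact sequence
$$0 \to i_!i^*\mathcal{F} \to \mathcal{F} \to j_*j^*\mathcal{F} \to 0$$
together with the five lemma reduces, by induction on the locus where $\mathcal{F}$ fails to be locally constant, to two cases: the skyscraper case $\mathcal{F}=j_*M$, which follows from Tate's duality over the Henselian local ring at $v$; and the case $\mathcal{F}$ locally constant on $U$. In the locally constant case, passing to a finite \'etale cover trivializing $\mathcal{F}$ and applying a Shapiro/transfer argument I reduce to $\mathcal{F}=\Z/n\Z$, whose Cartier dual is $\mu_n$; the required pairing becomes
$$H^r_c(U,\Z/n\Z) \times H^{3-r}(U,\mu_n) \to H^3_c(U,\mathbf{G}_m) \cong \Q/\Z.$$

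To establish perfectness of this last pairing, I would place the long exact sequence for cohomology with compact support from Section \ref{Sub21} next to its analogue for $\mu_n$ and use Tate local duality at every $v\in S$ (with the modified version at real places of $F$) to identify the two sequences as Pontryagin duals term by term; the required commutativity follows from functoriality of the Yoneda pairing and cup products. The main obstacle, and the heart of the argument, is the exactness of the resulting Poitou-Tate nine term sequence at its middle terms; classically this rests on the Brauer-Hasse-Noether reciprocity sequence from class field theory together with Proposition \ref{prop21} and Chebotarev density. The presence of real places --- the whole point of the generalization in this paper --- forces one to use Tate cohomology at archimedean $v\in X_\infty$ so that the local pairings remain perfect in all degrees; this is the delicate point that disappears in the totally imaginary setting of \cite{ACST1, ACST}.
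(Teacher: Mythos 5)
The paper does not prove this statement: Theorem~\ref{thm23} is quoted verbatim from Milne's \emph{Arithmetic Duality Theorems} (\cite[Theorem 2.3.1]{ADT}) and is used as a black box, so there is no in-paper argument to compare yours against. What you have written is an outline of the proof given in that reference (and in Mazur and Zink): finiteness, d\'evissage via $0 \to i_!i^*\mathcal{F} \to \mathcal{F} \to j_*j^*\mathcal{F} \to 0$ to the skyscraper and locally constant cases, reduction to $\Z/n\Z$, and then local duality plus the Poitou--Tate sequence. As a roadmap this is accurate, including the observation that the archimedean contributions must be taken in Tate (modified) cohomology for the local pairings to stay perfect.

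As a proof, however, the proposal stops short precisely where the work is. Three points deserve flagging. First, the identification $\Ext_U^{3-r}(\Z/n\Z,\mathbf{G}_m)\cong H^{3-r}(U,\mu_n)$ is only valid when $n$ is invertible on $U$ (otherwise $\underline{\Ext}^1_U(\Z/n\Z,\mathbf{G}_m)=\mathbf{G}_m/n$ is a nonzero sheaf at the places dividing $n$); you must first shrink $U$ to remove those places, which is legitimate only because the d\'evissage step shows the theorem for $(V,\mathcal{F}|_V)$ and for the deleted closed points implies it for $(U,\mathcal{F})$ --- this compatibility should be stated, not just the five-lemma diagram. Second, the ``Shapiro/transfer'' reduction from locally constant $\mathcal{F}$ to $\Z/n\Z$ is not formal: one needs the lemma that nondegeneracy for $\pi_*\pi^*\mathcal{F}$ together with trace maps forces nondegeneracy for the direct summand $\mathcal{F}$, and constructing the compatible traces is a genuine step in \cite{ADT}. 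Third, the case $\mathcal{F}=\Z/n\Z$ --- which you correctly identify as the heart of the matter --- is exactly global Poitou--Tate duality, and you defer it entirely to ``classical'' class field theory. Since that case carries essentially all the arithmetic content of the theorem, the proposal should be read as a correct plan of attack rather than a proof; every ingredient you name does exist in the cited literature, and none of the steps you propose would fail.
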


\noindent For an abelian group $A$, let $A_{U_S}$ be the constant sheaf on the \'etale site $\mathbf{Et}(U_S)$ (defined in Section \ref{Sub22}) defined by $A$.

\begin{proposition} \label{prop24}
$H^3_c(U_S, \Z/n\Z) \cong \mu_n(F)^D$. 
\end{proposition}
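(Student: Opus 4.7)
The plan is to apply Artin--Verdier duality (Theorem \ref{thm23}) directly to the constant sheaf $(\Z/n\Z)_{U_S}$, which is constructible on $U_S$, with $r=3$. This produces a non-degenerate pairing of finite abelian groups
\begin{equation*}
H^3_c(U_S, \Z/n\Z) \times \Ext^{0}_{U_S}(\Z/n\Z, \mathbf{G}_m) \longrightarrow H^3_c(U_S,\mathbf{G}_m) \cong \Q/\Z,
\end{equation*}
and hence a canonical isomorphism $H^3_c(U_S,\Z/n\Z) \cong \Ext^{0}_{U_S}(\Z/n\Z,\mathbf{G}_m)^D = \Hom_{U_S}(\Z/n\Z,\mathbf{G}_m)^D$.

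The remaining task is the identification $\Hom_{U_S}(\Z/n\Z,\mathbf{G}_m) \cong \mu_n(F)$. A morphism of \'etale sheaves from the constant sheaf $\Z/n\Z$ to $\mathbf{G}_m$ on the connected scheme $U_S$ is determined by the image of $1$, which must be a global section $x \in \mathbf{G}_m(U_S) = \mathcal{O}_F[1/S_f]^\times$ satisfying $x^n = 1$; that is, an element of $\mu_n(U_S)$. Since any $n$-th root of unity in $F$ is integral and a unit at every finite prime, $\mu_n(U_S) = \mu_n(F)$. Combining these two identifications gives the desired $H^3_c(U_S,\Z/n\Z) \cong \mu_n(F)^D$.

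The only step that requires care is verifying that $\Ext^{0}_{U_S}(\Z/n\Z,\mathbf{G}_m)$ really coincides with the sheaf-Hom and, via the connectedness of $U_S$, with $\mu_n(F)$; once Artin--Verdier is invoked the rest is formal, so I do not anticipate a serious obstacle. (Note that the statement implicitly uses the self-duality $\mu_n(F)^{DD} \cong \mu_n(F)$ of finite abelian groups, compatible with our convention that $A^D=\Hom(A,\Q/\Z)$.)
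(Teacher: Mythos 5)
Your proposal is correct and follows essentially the same route as the paper: apply Artin--Verdier duality at $r=3$ to reduce to $\Hom_{U_S}(\Z/n\Z,\mathbf{G}_m)^D$, then identify this Hom group with $\mu_n(F)$ (the paper does this via the Cartier dual $\Z/n\Z^{\vee}\cong\mu_{n,U_S}$, you via the universal property of the constant sheaf on a connected scheme, which amounts to the same computation). No gaps.
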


\begin{proof}
By Artin-Verdier duality, 
$$
H^3_c(U_S, \Z/n\Z) \cong \Hom_{U_S}(\Z/n\Z, \mathbf{G}_{m})^D.
$$
The isomorphism $(\Z/n\Z)_{U_S}^{\vee} \cong \ker(\Z_{U_S}^{\vee} \overset{n}{\rightarrow} \Z_{U_S}^{\vee}) = \ker(\mathbf{G}_{m, U_S} \overset{n}{\rightarrow} \mathbf{G}_{m, U_S}) = \mu_{n, U_S}$ implies
\begin{equation*}
\Hom_{U_S}(\Z/n\Z, \mathbf{G}_{m}) = (\Z/n\Z)_{U_S}^{\vee}(U) = \mu_{n, U_S}(U_S) = \mu_n(\mathcal{O}_F\left [ \frac{1}{S_f} \right ]) = \mu_n(F). 
\qedhere
\end{equation*}
\end{proof}

\noindent Denote the isomorphism $H^3_c(U_S, \Z/n\Z) \rightarrow \mu_n(F)^D$ by $\inv_S$ and $H^3_c(X, \Z/n\Z) \rightarrow \mu_n(F)^D$ by $\inv$. Note that the following diagram commutes and the map $H^3_c(X, \Z/n\Z) \rightarrow H^3_c(U_S,\Z/n\Z)$ is an isomorphism. 

\[
\begin{tikzcd}
H^3_c(X, \Z/n\Z)  \arrow[dr, "\inv", "\simeq"'] \arrow[rr] && H^3_c(U_S,\Z/n\Z) \arrow[dl, "\inv_S"', "\simeq"] \\
 & \mu_n(F)^D & 
\end{tikzcd}
\]

\subsection{Arithmetic Chern-Simons action without boundary} \label{Sub23}

\noindent Let $\mathbf{FSet}_{\pi_S}$ be the category of finite continuous $\pi_S$-sets, $\mathbf{FEt}(U_S)$ be the category of finite \'etale $U_S$-schemes and $\mathbf{Et}(U_S)$ be the category of \'etale $U_S$-schemes. 
Each category can be understood as a site with a natural Grothendieck topology. 
Since the functor $\mathbf{FEt}(U_S) \rightarrow \mathbf{FSet}_{\pi_S}$ ($Y \mapsto Y(\mathfrak{b}_S) := \Hom_{U_S}(\Spec(\overline{F}), Y)$) is an equivalence of categories (\cite[Theorem 3.2.12]{LF}) and there is a natural morphism of sites $\mathbf{FEt}(U_S) \rightarrow \mathbf{Et}(U_S)$, the map
$$ 
j^i : H^i(\pi_S, \Z/n\Z) \cong H^i(\mathbf{FEt}(U_S), \Z/n\Z) \rightarrow H^i(\mathbf{Et}(U_S), \Z/n\Z) = H^i(U_S, \Z/n\Z)
$$
is defined and it induces a homomorphism
$$
j^i_c : H^i_c(\pi_S, \Z/n\Z) \rightarrow H^i_c(U_S, \Z/n\Z).
$$
It is easy to show that the map $H^3(\pi^{un}, \Z/n\Z) \rightarrow H^3_c(\pi, \Z/n\Z)$ given by
$$
[w] \mapsto [(\kappa_{X_{\infty}}^*(w), (0)_{v \in X_{\infty}})]
$$
is a well-defined group homomorphism. 
(Note that $\kappa_{X_{\infty}}:\pi \cong \Gal(F_{un}^f/F) \to \pi^{un}=\Gal(F_{un}/F)$ is the natural quotient map, i.e. $\kappa_{X_{\infty}}$ is the map $\kappa_{\widetilde{S}}$ defined in Section \ref{Sub21} for $S_f =\phi$ and $\widetilde{S} = X_{\infty}$.)
Now define the map $j^3_{un}$ by
\begin{eqnarray}\label{jun}
j^3_{un} : H^3(\pi^{un}, \Z/n\Z) \rightarrow H^3_c(\pi, \Z/n\Z)  \xrightarrow{j^3_c} H^3_c(X, \Z/n\Z).
\end{eqnarray}

\noindent Let $A$ be a finite group, $\mathcal{M}(A):=\Hom_{cont}(\pi^{un}, A)/A$ and fix a class $c \in H^3(A, \Z/n\Z)$ for an integer $n \geq 2$. Define the \textbf{arithmetic Chern-Simons action} by a function
$$ CS_c : \mathcal{M}(A) \rightarrow \mu_n(F)^D, \quad \,\,\, [\rho] \mapsto \inv(j^3_{un}(\rho^*(c))). $$
Since $\rho^*(c)$ depends only on the class $[\rho]$, $CS_c$ is well-defined.

\begin{proposition} \label{prop25}
Let $m>0$ be a divisor of $n$ such that $\mu_n(F)=\mu_m(F)$ (for example, $m=\left | \mu_n(F) \right |$), $\alpha : \Z/n\Z \rightarrow \Z/m\Z$ be the group homomorphism defined by $\alpha(1)=1$ and $c'=\alpha_* c \in H^3(A, \Z/m\Z)$. Then $CS_c([\rho])=CS_{c'}([\rho])$ for every $[\rho] \in \mathcal{M}(A)$. 
\end{proposition}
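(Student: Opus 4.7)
The plan is to exploit the naturality of all the constructions involved, then use Artin-Verdier duality to identify the map induced by $\alpha_\ast$ on the $\operatorname{inv}$ side.

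First I would unwind the definitions. Functoriality of pull-back and push-forward on group cohomology gives $\rho^\ast(c') = \rho^\ast(\alpha_\ast c) = \alpha_\ast(\rho^\ast c)$ in $H^3(\pi^{\un}, \Z/m\Z)$. Since $\alpha$ is a morphism of constant coefficient systems, the edge map $j^3_{\un}$ of \eqref{jun} (and each of its constituents, namely the map $H^3(\pi^{\un}, -) \to H^3_c(\pi, -)$ and $j^3_c : H^3_c(\pi,-) \to H^3_c(X,-)$) commutes with $\alpha_\ast$. Hence
\[
j^3_{\un}(\rho^\ast c') \;=\; \alpha_\ast\bigl(j^3_{\un}(\rho^\ast c)\bigr) \;\in\; H^3_c(X, \Z/m\Z).
\]
The proposition therefore reduces to the assertion that the diagram
\[
\begin{tikzcd}
H^3_c(X,\Z/n\Z) \arrow[r,"\alpha_\ast"] \arrow[d,"\operatorname{inv}"'] & H^3_c(X,\Z/m\Z) \arrow[d,"\operatorname{inv}"] \\
\mu_n(F)^D \arrow[r,"\operatorname{res}"'] & \mu_m(F)^D
\end{tikzcd}
\]
commutes, where $\operatorname{res}$ is the restriction dual to the inclusion $\mu_m(F) \hookrightarrow \mu_n(F)$; once this is known, the hypothesis $\mu_n(F)=\mu_m(F)$ forces $\operatorname{res}$ to be the identity and gives $CS_c([\rho])=CS_{c'}([\rho])$.

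To verify the diagram, I would trace through the Artin-Verdier pairing of Theorem \ref{thm23}. The Cartier dual of $\alpha : \Z/n\Z \to \Z/m\Z$ is obtained by applying $\underline{\Hom}_X(-,\mathbf{G}_m)$: a character $\chi : \Z/m\Z \to \mathbf{G}_m$ with $\chi(1)=\zeta \in \mu_m$ is sent to $\chi\circ\alpha$, which again sends $1\mapsto \zeta$, now viewed inside $\mu_n$. Thus $\alpha^\vee : \mu_{m,X} \hookrightarrow \mu_{n,X}$ is the natural inclusion. Compatibility of the Yoneda pairing with morphisms of sheaves (applied to $\alpha$ on the first factor and $\alpha^\vee$ on the second) then gives that $\alpha_\ast$ on $H^3_c$ is Pontryagin-dual to the inclusion on $\Hom_X(-,\mathbf{G}_m) = \mu_{(-)}(F)$, which is exactly restriction on the character groups. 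This establishes the required commutativity.

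The only real obstacle is step three: carefully chasing the Artin-Verdier duality identification of Proposition \ref{prop24} through the functorial behavior of the Yoneda Ext pairing. Everything else is formal naturality. Once the identification of $\alpha^\vee$ with the inclusion $\mu_m \hookrightarrow \mu_n$ is in hand, the conclusion is immediate from $\mu_n(F)=\mu_m(F)$.
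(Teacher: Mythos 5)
Your argument is correct and is essentially identical to the paper's proof, which consists of exactly the commutative diagram you describe: the first two squares are the naturality of $\rho^*$ and $j^3_{\un}$ under $\alpha_*$, the third is the compatibility of Artin--Verdier duality with $\alpha$ and its Cartier dual (the inclusion $\mu_m \hookrightarrow \mu_n$), and the final identification uses $\mu_n(F)=\mu_m(F)$. Nothing further is needed.
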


\begin{proof}
The proposition follows from the following commutative diagram. 
\[
\begin{tikzcd}[column sep=1.25em]
H^3(A, \Z/n\Z) \arrow[r, "\rho^*"] \arrow[d, "\alpha_*"]
& H^3(\pi^{un}, \Z/n\Z) \arrow[r, "j^3_{un}"] \arrow[d, "\alpha_*"]
& H^3_c(X, \Z/n\Z) \arrow[r, "\simeq"] \arrow[d, "\alpha_*"]
& \Hom_X(\Z/n\Z, \mathbf{G}_{m, X})^D  \arrow[r, "\simeq"] \arrow[d, "(\alpha^*)^D"]
& \mu_n(F)^D \arrow[d, equal]  \\
H^3(A, \Z/m\Z) \arrow[r, "\rho^*"]
& H^3(\pi^{un}, \Z/m\Z) \arrow[r, "j^3_{un}"]
& H^3_c(X, \Z/m\Z) \arrow[r, "\simeq"]
& \Hom_X(\Z/m\Z, \mathbf{G}_{m, X})^D \arrow[r, "\simeq"]
& \mu_m(F)^D
\end{tikzcd}
\] 
\end{proof}

\noindent By the proposition above, we may assume that $\mu_n(\overline{F}) =\mu_n(F)$ which we assume from now on. Since $F$ is totally imaginary for $n \geq 3$ under this assumption, the case $n=2$ is the only case which \cite{ACST} does not cover. 
Note that there is an isomorphism between $\mu_n(F)^D$ with $\frac{1}{n}\Z/\Z$ so that we can regard $CS_c$ a $\frac{1}{n}\Z/\Z$-valued function.
Later when we state and prove the decomposition formula, we will choose an identification $\phi: \mu_n(F)^D\xrightarrow{\sim} \frac{1}{n}\Z/\Z$ (see (2), Section \ref{Sec3}).

\subsection{Arithmetic Chern-Simons action with boundary} \label{Sub24}

\noindent Let $S_n$ be any finite set of finite primes of $F$ containing primes of $F$ above $n$. Denote $T=S_n \cup X_{\infty}$ and $T_f=S_n$. Note that $T=T_f=S_n$ for $n \geq 3$ due to the assumption $\mu_n(\overline{F}) =\mu_n(F)$. 
For $T' \in \left \{ T, T_f \right \}$, let 
$$
Y_{T'}(A) := \Hom_{cont}(\pi_{T'}, A)
$$
equipped with the conjugation action of $A$ and denote its action groupoid by $\mathcal{M}_{T'}(A)$. 
Similarly, let
$$
Y^{loc}_{T'}(A) := \prod_{v \in T'} \Hom_{cont}(\pi_v, A)
$$
equipped with the conjugation action of $A^{T'} := \prod_{v \in T'} A$ and denote its action groupoid by $\mathcal{M}^{loc}_{T'}(A)$. 
Define the functor $r_{T'} : \mathcal{M}_{T'}(A) \rightarrow \mathcal{M}^{loc}_{T'}(A)$ by 
$$
r_{T'}(\rho) = i^*_{T'}(\rho) := (\rho \circ i_v)_{v \in T'}. 
$$
Let $C_T^i := \prod_{v \in T} C^i(\pi_v, \Z/n \Z)$, 
$Z_T^i := \prod_{v \in T} Z^i(\pi_v, \Z/n \Z)$ and 
$B_T^i := \prod_{v \in T} B^i(\pi_v, \Z/n \Z)$ be the product of continuous cochains, cocycles and coboundaries for each $v \in T$ and $d_T := (d_v)_{v \in T} : C_T^2 \rightarrow Z_T^3$. Let $\rho_T := (\rho_v)_{v \in T} \in Y_T^{loc}(A)$, $c \circ \rho_T := (c \circ \rho_v)_{v \in T}$ and $c \circ Ad_a := (c \circ Ad_{a_v})_{v \in T}$ for $a=(a_v)_{v \in T} \in A^T$ where $Ad_{a_v} : A \rightarrow A$ is defined by $Ad_{a_v}(x)=a_v x a_v^{-1}$. For $H_T^2 := \prod_{v \in T}H^2(\pi_v, \Z/n \Z)$, 
$$ H(\rho_T) := d_T^{-1}(c \circ \rho_T)/B_T^2 \subset C_T^2/B_T^2 $$
is a $H_T^2$-torsor. 

\noindent By \cite[Theorem 2.5.2]{GC}, there is an isomorphism $\inv_v : H^2(\pi_v, \Z/n \Z) \rightarrow \frac{1}{n}\Z/\Z$ for all $v \in T$. By the Poitou-Tate exact sequence (\cite[Theorem I.4.10]{ADT}), the sequence
\begin{eqnarray}\label{exactseq}
H^2(\pi_T, \Z/n \Z) \rightarrow H_T^2 \rightarrow H^0(\pi_T, \Z/n \Z^{\vee})^D \rightarrow 0
\end{eqnarray}
is exact. Denote the map $H_T^2 \rightarrow H^0(\pi_T, \Z/n \Z^{\vee})^D \xrightarrow{\simeq} \frac{1}{n}\Z/\Z$ by $\sum$ and let
\begin{eqnarray}\label{LT}
L_T(\rho_T) := \textstyle \sum_*(H(\rho_T))
\end{eqnarray}
be the $\frac{1}{n}\Z/\Z$-torsor defined by the pushout of the $H_T^2$-torsor $H(\rho_T)$ by $\sum$. 
By \cite[Proposition 8.3.18]{NSW} and \cite[Theorem 10.6.1]{NSW}, $H^3(\pi_{T_f}, \Z/n \Z)=0$ so for $\rho \in Y_{T_f}(A)$, $c \circ \rho = d \beta$ for some $\beta \in C^2(\pi_{T_f}, \Z/n \Z)$. Then for the quotient map $p_T : \pi_T \rightarrow \pi_{T_f}$, we have $c \circ \rho \circ p_T = d(\beta \circ p_T)$ and $d(i_T^*(\beta \circ p_T)) = c \circ i_T^*(\rho \circ p_T)$ so $[i_T^*(\beta \circ p_T)] \in H(i_T^*(\rho \circ p_T))$. Now define the \textbf{arithmetic Chern-Simons action} with boundary $T_f$ by
\begin{eqnarray}\label{wb}
CS_{T_f, c}(\rho):=\textstyle{\sum_*} ([i_T^*(\beta \circ p_T)]) \in L_T(i_T^*(\rho \circ p_T)), \quad \rho \in Y_{T_f}(A).
\end{eqnarray}
For $\beta, \beta' \in C^2(\pi_{T_f}, \Z/n \Z)$ such that $d \beta = d \beta' = c \circ \rho$, $\beta'=\beta+z$ for some $z \in Z^2(\pi_{T_f}, \Z/n \Z)$. Then $\textstyle{\sum_*}([i_T^*(z \circ p_T)])=0$ by the Poitou-Tate exact sequence \eqref{exactseq}, so $CS_{T_f, c}$ is well-defined. 
Because $H^3(\pi_T, \Z/n\Z)\neq 0$ when $X_\infty$ is not empty (i.e. there is a real place in $F$), the arithmetic CS action with boundary does not seem to extend from $Y_{T_f}(A)$ to $Y_{T}(A)$.

\begin{remark} \label{rmk26}
The map $H_T^2 \rightarrow H^0(\pi_T, \Z/n \Z^{\vee})^D$ is the dual of $H^0(\pi_T, \Z/n \Z^{\vee}) \rightarrow \prod_{v \in T} H^0(\pi_v, \Z/n \Z^{\vee})$ induced by $i_{v, T}$ for each $v \in T$. (Each map corresponds to the map $\gamma^2$ and $\beta^0$ in
\cite[Theorem I.4.10]{ADT}, respectively. In \cite{ADT}, the map $\gamma^r$ is defined to be the dual of $\beta^{2-r}$ for each $r \in \Z$.) 
For each $v \in T$, the map 
$H^0(\pi_T, \Z/n \Z^{\vee}) \rightarrow H^0(\pi_v, \Z/n \Z^{\vee})$ is identified with an isomorphism $\mu_n(F) \rightarrow \mu_n(F_v)$ so the map $\sum$ is same as $\sum_{v \in T} \inv_v$ and equals to the sum map $\sum$ of \cite[p. 3]{ACST}. 
\end{remark}

Note that $L_T$ in \eqref{LT} can be extended to a functor from $\mathcal{M}_T^{loc}(A)$ to the category of $\frac{1}{n}\Z/\Z$-torsors $\mathbf{Tors}(\frac{1}{n}\Z/\Z)$ exactly as in \cite[p. 6-7]{ACST}. 
Denote 
$$L_T^{glob} := L_T \circ r_T : \mathcal{M}_T(A) \rightarrow \mathbf{Tors}(\frac{1}{n}\Z/\Z). $$
According to the axioms of topological field theory, it is natural to expect that $CS_{T_f,c}(\cdot)$ is an invariant section of the functor $L_T^{glob} \circ p_T^*$ (where $p_T^*:\mathcal{M}_{T_f}(A) \to \mathcal{M}_{T}(A)$ is induced from $p_T$); we provide such a result.
\begin{lemma} \label{lem27}
Let $\rho \in Y_T(A)$ and $a \in \Aut(\rho)$ be a morphism in $\mathcal{M}_T(A)$. Then $L_T^{glob}(a)=id_{L_T^{glob}(\rho)}$. 
\end{lemma}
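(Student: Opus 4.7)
The plan is to unwind the definition of $L_T^{\glob}$ on the morphism $a$, identify it as translation by a class in $H_T^2$ that arises as the restriction of a global class in $H^2(\pi_T, \Z/n\Z)$, and then invoke Poitou--Tate exactness (as recorded in Remark \ref{rmk26}) to conclude that the resulting translation in $\tfrac{1}{n}\Z/\Z$ is zero.

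First, I would unpack the combinatorics. An automorphism $a \in \Aut(\rho)$ in the action groupoid $\mathcal{M}_T(A)$ is an element $a \in A$ with $Ad_a \circ \rho = \rho$. Applying $r_T$, the morphism $a$ goes to the diagonal element $(a,\ldots,a) \in A^T$, and $Ad_a \circ \rho_v = \rho_v$ for every $v \in T$. Since conjugation acts trivially on group cohomology with trivial coefficients, the 3-cocycles $c$ and $c \circ Ad_a$ on $A$ are cohomologous: pick $\epsilon_a \in C^2(A, \Z/n\Z)$ with $d\epsilon_a = c - c \circ Ad_a$. Pulling back by $\rho$ (resp.\ by each $\rho_v$) yields $\epsilon_a \circ \rho \in C^2(\pi_T, \Z/n\Z)$ and $\epsilon_a \circ \rho_T \in C_T^2$, and from $Ad_a \circ \rho = \rho$ we get $d(\epsilon_a \circ \rho) = c\circ\rho - c\circ Ad_a \circ \rho = 0$, so these are actually 2-cocycles.

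Next, I would verify (replicating the construction of $L_T$ as a functor $\mathcal{M}_T^{loc}(A) \to \mathbf{Tors}(\tfrac{1}{n}\Z/\Z)$ exactly as in \cite[p.~6--7]{ACST}) that the induced map $L_T(r_T(a)) : L_T(\rho_T) \to L_T(\rho_T)$ is translation by the class $[\epsilon_a \circ \rho_T] \in H_T^2$, pushed forward to $\tfrac{1}{n}\Z/\Z$ via $\sum$. Concretely, for $\beta \in d_T^{-1}(c \circ \rho_T)$, one has $d_T(\beta + \epsilon_a \circ \rho_T) = c \circ \rho_T + (c - c\circ Ad_a)\circ\rho_T = c \circ (Ad_a \circ \rho_T) = c \circ \rho_T$, so $\beta \mapsto \beta + \epsilon_a \circ \rho_T$ is a self-map of the torsor $H(\rho_T)$; independence from the choice of $\epsilon_a$ modulo $B^2(A,\Z/n\Z)$ gives a well-defined class in $H_T^2$.

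Finally, the cocycle $\epsilon_a \circ \rho$ is a \emph{global} class in $H^2(\pi_T, \Z/n\Z)$ whose restrictions to each $\pi_v$ (via $i_v$) are precisely the local components $\epsilon_a \circ \rho_v$. Hence $[\epsilon_a \circ \rho_T]$ lies in the image of the localization map $H^2(\pi_T,\Z/n\Z) \to H_T^2$. By Remark \ref{rmk26}, the map $\sum : H_T^2 \to \tfrac{1}{n}\Z/\Z$ coincides with the sum of local invariants appearing in the Poitou--Tate exact sequence, whose composition with this localization is zero. Therefore $L_T^{\glob}(a)$ is translation by $0 \in \tfrac{1}{n}\Z/\Z$, i.e.\ the identity on $L_T^{\glob}(\rho)$.

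The main obstacle I anticipate is the bookkeeping in the middle paragraph: one must carefully track the sign conventions implicit in the definition of the mapping fiber $C_c$ and in the functoriality of $L_T$ on morphisms (not just objects), so that the translation class on the torsor really is represented by $\epsilon_a \circ \rho_T$ rather than by some coboundary-modified variant. Once that identification is pinned down, the Poitou--Tate argument is immediate and handles the real places transparently, since the exact sequence recalled in Remark \ref{rmk26} already incorporates the contributions from $v \in X_\infty$.
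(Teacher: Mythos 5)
Your proposal is correct and follows essentially the same route as the paper: identify the action of $a$ on the torsor as translation by the localization of the global class $[h_a\circ\rho]\in H^2(\pi_T,\Z/n\Z)$ (the paper cites \cite[Lemma A.2]{ACST} for the cochain $h_a$ with $c\circ Ad_a = c + dh_a$, which is your $\epsilon_a$ up to sign), and then kill it with the Poitou--Tate exact sequence via Remark \ref{rmk26}. The sign-convention bookkeeping you flag is harmless here since the class is shown to push forward to $0$ regardless.
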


\begin{proof}
By the definition of the action groupoid $\mathcal{M}_T(A)$, $Ad_a \circ \rho = \rho$. By \cite[Lemma A.2]{ACST}, there is $h_a \in C^2(A, \Z/n \Z)/B^2(A, \Z/n \Z)$ such that
$$
c \circ Ad_a = c + dh_a, 
$$
so $dh_a \circ \rho = d(h_a \circ \rho)=0$ and $h_a \circ \rho \in H^2(\pi_T, \Z/n \Z)$. $L_T^{glob}(a)$ maps an element of $L_T^{glob}(\rho)$ represented by $(f,x) \in H(r_T(\rho)) \times \frac{1}{n}\Z/\Z$ to an element represented by $(f+(h_a \circ \rho \circ i_v)_{v \in T}, x)$, or equivalently, $(f, x+\sum((h_a \circ \rho \circ i_v)_{v \in T}))$. By the exact sequence \eqref{exactseq} above, $\sum ((h_a \circ \rho \circ i_v)_{v \in T})=0$ so $L_T^{glob}(a)$ is the identity map on $L_T^{glob}(\rho)$. 
\end{proof}

\noindent By the above lemma, $L_T^{glob}$ restricted on the orbit $[\rho] \in \mathcal{M}_T(A)=Y_T(A)/A$ is a functor from a connected groupoid to $\mathbf{Tors}(\frac{1}{n}\Z/\Z)$ with no holonomy. Therefore, the set $L_T^{inv}([\rho])$ of its invariant sections becomes a non-zero $\frac{1}{n}\Z/\Z$-torsor and it is given explicitly by
$$
L_T^{inv}([\rho]):=\mathop{\lim_{\longleftarrow}}_{\rho' \in [\rho]} L_T^{glob}(\rho')= \big\{ (x_{\rho'}) \in  \prod_{\rho' \in [\rho]} L_T^{glob}(\rho') \mid \forall a \in \Hom(\rho_1, \rho_2) \,\, L_T^{glob}(a) (x_{\rho_1})=x_{\rho_2} \big\}. 
$$
Thus, for a given $[\rho] \in \mathcal{M}_{T_f}(A)=Y_{T_f}(A)/A$, we have   $[p_T^*(\rho)]=[\rho \circ p_T] \in \mathcal{M}_T(A)$ and $L_T^{inv}([\rho \circ p_T])$ is a $\frac{1}{n}\Z/\Z$-torsor;
one can check the desired result
$$
CS_{T_f, c}([\rho])\in L_T^{inv}([\rho \circ p_T]), \quad [\rho] \in \mathcal{M}_{T_f}(A).
$$

\section{Decomposition formula} \label{Sec3}

\noindent An explicit computation of the arithmetic Chern-Simons invariant relies on the decomposition formula, which expresses the arithmetic Chern-Simons invariant as the difference of a ramified global trivialization and an unramified local trivialization. In this section, we prove the decomposition formula. The proof is done in several steps. \\

\noindent (1) By \cite[8.6.10 (ii)]{NSW}, $H^3(\pi_T, \Z/n \Z) \cong \prod_{v \in T} H^3(\pi_v, \Z/n \Z)$ so the top row of the following diagram is exact. Its bottom row is exact by Poitou-Tate exact sequence and Remark \ref{rmk26}. 

\[
\begin{tikzcd}
{H^2(\pi_T, \Z/n \Z)} \arrow[r] \arrow[d, equal] & {\prod_{v \in T} H^2(\pi_v, \Z/n \Z)} \arrow[r] \arrow[d, equal] & {H^3_c(\pi_T, \Z/n \Z)} \arrow[r] \arrow[d, dashed, "\inv'_T"] & 0 \\
{H^2(\pi_T, \Z/n \Z)} \arrow[r] & {\prod_{v \in T} H^2(\pi_v, \Z/n \Z)} \arrow[r, "\sum_{v \in T} \inv_v"] & \frac{1}{n}\Z/\Z \arrow[r] & 0
\end{tikzcd}
\]

\noindent From the diagram above, we obtain an isomorphism $\inv'_T : H^3_c(\pi_T, \Z/n \Z) \rightarrow \frac{1}{n}\Z/\Z$ which makes the diagram commute. \\

\noindent (2) By \cite[Theorem 6.1.1]{CM}, $j^3_c : H^3_c(\pi_T, \Z/n \Z) \rightarrow H^3_c(U_T, \Z/n \Z)$ is an isomorphism. Now let $\phi : \mu_n(F)^D \rightarrow \frac{1}{n}\Z/\Z$ be the isomorphism defined by the composition
$$
\mu_n(F)^D \xrightarrow{\inv_T^{-1}} 
H^3_c(U_T, \Z/n \Z) \xrightarrow{(j^3_c)^{-1}}
H^3_c(\pi_T, \Z/n \Z) \xrightarrow{\inv'_T}
\frac{1}{n}\Z/\Z.
$$
Then the following diagram commutes:

\[
\begin{tikzcd}
 & & {H^3_c(X, \Z/n \Z)} \arrow[d, "\simeq"'] \arrow[rd, "\inv", "\simeq"'] & \\
{H^3(\pi^{un}, \Z/n \Z)} \arrow[d] \arrow[r] \arrow[rru, "j^3_{un}"] & {H^3_c(\pi, \Z/n \Z)} \arrow[d] \arrow[ru, "j^3_c"'] & {H^3_c(U_T, \Z/n \Z)} \arrow[r, "\inv_T", "\simeq"'] & \mu_n(F)^D \arrow[d, dashed, "\phi", "\simeq"'] \\
{H^3_c(\pi_{T_f}, \Z/n \Z)} \arrow[r] & {H^3_c(\pi_T, \Z/n \Z)} \arrow[ru, "\simeq"', "j^3_c"] \arrow[rr, "\inv'_T", "\simeq"'] &  & \frac{1}{n}\Z/\Z.
\end{tikzcd}
\]

\noindent (3) By \cite[Proposition 2.18]{GC}, $H^2(\pi_v/I_v, \Z/n \Z)=H^3(\pi_v/I_v, \Z/n \Z)=0$ for every $v \in T$. Let $T_1 \subset T_2$ be finite sets of primes of $F$ which may or may not contain real places and denote the projection $\pi_{T_2} \rightarrow \pi_{T_1}$ by $\kappa_{T_1, T_2}$. The map $H^3_c(\pi_{T_1}, \Z/n \Z) \rightarrow H^3_c(\pi_{T_2}, \Z/n \Z)$ can be described as follows. 

\noindent Choose an element $[(a, (b_v)_{v \in T_1})] \in H^3_c(\pi_{T_1}, \Z/n \Z)$. For $v \in T_2 \setminus T_1$, $\kappa_v$ factors through $\pi_v/I_v$ so $r_v(a)$ can be restricted to $r_v(a) \mid_{\pi_v/I_v} \in Z^3(\pi_v/I_v, \Z/n \Z)=B^3(\pi_v/I_v, \Z/n \Z)$. Now there is a canonical
$$
\widetilde{b_v} \in C^2(\pi_v/I_v, \Z/n \Z)/B^2(\pi_v/I_v, \Z/n \Z) \,\,\, (v \in T_2 \setminus T_1)
$$
such that $d\widetilde{b_v} = r_v(a) \mid_{\pi_v/I_v}$. This can be lifted to a canonical class $b_v \in C^2(\pi_v, \Z/n \Z)/B^2(\pi_v, \Z/n \Z)$. (Note that $b_v=0$ if $v \in X_{\infty}$.) Now $[(\kappa_{T_1, T_2}^*(a), (b_v)_{v \in T_2})]$ is an element of $H^3_c(\pi_{T_2}, \Z/n \Z)$ and it is independent of the choice of $b_v$ because $H^2(\pi_v/I_v, \Z/n \Z)=0$. The canonical map
$$
H^3_c(\pi_{T_1}, \Z/n \Z) \rightarrow H^3_c(\pi_{T_2}, \Z/n \Z)  \,\,\, ([(a, (b_v)_{v \in T_1})] \mapsto [(\kappa_{T_1, T_2}^*(a), (b_v)_{v \in T_2})])
$$
is a group homomorphism and one can deduce that it is same as the map induced by $C^3(\pi_{T_1}, \Z/n \Z) \rightarrow C^3(\pi_{T_2}, \Z/n \Z)$ from the definition of the mapping fiber. \\

\noindent (4) Let $w$ be the cocycle representing $\rho^*(c) \in H^3(\pi^{un}, \Z/n \Z)$. By the vanishing of $H^3(\pi_{T_f}, \Z/n \Z)$, we can find a global cochain $b'_+ \in C^2(\pi_{T_f}, \Z/n \Z)$ such that $\kappa_{T_f}^*(w)=db'_+$ (here $\kappa_{T_f}:\pi_{T_f} \to \pi^{un}$ is the natural quotient).  Denote $b_+ := b'_+ \circ p_T \in C^2(\pi_T, \Z/n \Z)$ and $b_{+,v} := r_v(b_v) \in C^2(\pi_v, \Z/n \Z)$. 

\noindent From the discussion in (3), the image of $\rho^*(c)=[w]$ in $H^3_c(\pi_T, \Z/n \Z)$ is $[(\kappa_T^*(w), (b_{-,v})_{v \in T})]$, where $b_{-,v}$ is a lift of $\widetilde{b_{-,v}} \in C^2(\pi_v/I_v, \Z/n \Z)$ such that $\kappa_v^*(w) \mid_{\pi_v/I_v} = d\widetilde{b_{-,v}}$. Now
$$
[(\kappa_T^*(w), (b_{-,v})_{v \in T})] =[(db_+, (b_{-,v})_{v \in T})] =[(0, (b_{-,v}-b_{+,v})_{v \in T})]
$$
and by the diagram in (2) and the fact that $b_{-,v}=b_{+,v}=0$ for $v \in X_{\infty}$, 
$$
CS_c([\rho]) = \inv'_T([(0, (b_{-,v}-b_{+,v})_{v \in T})]) = \sum_{v \in T_f} \inv_v([b_{-,v}-b_{+,v}]). 
$$

\begin{theorem} \label{thm3}
Let $F$ be a number field and $n \geq 2$ be an integer. For any finite set $T_f$ of finite primes of $F$ containing all primes dividing $n$ with the notations above, we have the decomposition formula
$$
CS_c([\rho]) := \inv(j^3_{un}(\rho^*(c))) = \sum_{v \in T_f} \inv_v([b_{-,v}-b_{+,v}]).
$$
\end{theorem}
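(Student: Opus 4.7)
The plan is to reduce the computation of $\inv(j^3_{\un}(\rho^*(c)))$ on $X$ to a sum of local $H^2$ invariants over $T_f$ by passing through the Galois cohomology of $\pi_T$, where $T = T_f \cup X_{\infty}$ and $T_f$ contains the primes above $n$. The essential ingredients have already been prepared: the Artin-Verdier isomorphism $\alpha : H^3_c(X, \Z/n\Z) \xrightarrow{\sim} H^3_c(U_T, \Z/n\Z)$ from step (4), the \'etale-to-Galois comparison $j^3_c : H^3_c(\pi_T, \Z/n\Z) \xrightarrow{\sim} H^3_c(U_T, \Z/n\Z)$ of step (5), and the Poitou-Tate invariant $\inv'_T : H^3_c(\pi_T, \Z/n\Z) \xrightarrow{\sim} \frac{1}{n}\Z/\Z$ from step (1). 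The diagram in step (5) shows these are compatible with $\inv$ once the identification $\mu_n(F)^D \cong \frac{1}{n}\Z/\Z$ is fixed appropriately, so the remaining work is a cocycle calculation.

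I would start from a representative cocycle $w$ of $\rho^*(c) \in H^3(\pi^{\un}, \Z/n\Z)$ and track its image through the enlargement map $H^3_c(\pi, \Z/n\Z) \to H^3_c(\pi_T, \Z/n\Z)$ described explicitly in step (2); this image is $[(\kappa_T^*(w), (b_{-,v})_{v \in T})]$, where each $b_{-,v}$ is the canonical lift of the (mod-coboundary) unique $\widetilde{b_{-,v}}$ with $d\widetilde{b_{-,v}} = \kappa_v^*(w)|_{\pi_v/I_v}$, existence and uniqueness guaranteed by the vanishing $H^2(\pi_v/I_v, \Z/n\Z) = H^3(\pi_v/I_v, \Z/n\Z) = 0$. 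At real places $v \in X_{\infty}$ this lift is zero.

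Next I would invoke the vanishing $H^3(\pi_{T_f}, \Z/n\Z) = 0$ to produce a global $b'_+ \in C^2(\pi_{T_f}, \Z/n\Z)$ with $db'_+ = \kappa_{T_f}^*(w)$, set $b_+ := b'_+ \circ p_T$ and $b_{+,v} := r_v(b_+)$, and subtract the coboundary of the pair $(b_+, 0) \in C^2_c(\pi_T, \Z/n\Z)$. Using the formula $d(a, (b_v)) = (da, (r_v(a) - db_v))$ this rewrites the class as
$$[(\kappa_T^*(w), (b_{-,v})_{v \in T})] = [(db_+, (b_{-,v})_{v \in T})] = [(0, (b_{-,v} - b_{+,v})_{v \in T})].$$
Applying $\inv'_T$, which on the image of $\prod_v H^2(\pi_v, \Z/n\Z)$ agrees with $\sum_v \inv_v$ by the defining diagram of step (1), and noting that the archimedean summands drop out since $b_{-,v} = b_{+,v} = 0$ for $v \in X_{\infty}$, yields exactly $\sum_{v \in T_f} \inv_v([b_{-,v} - b_{+,v}])$.

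The main obstacle I anticipate is not any single isomorphism but the coherent calibration of the invariants $\inv$, $\inv_T$, $\inv'_T$ and the identification $\mu_n(F)^D \cong \frac{1}{n}\Z/\Z$ so that the whole diagram in step (5) commutes on the nose. In particular, the isomorphism $\gamma : H^3_c(X, \mathbf{G}_m) \xrightarrow{\sim} H^3_c(U_T, \mathbf{G}_m)$ constructed from $0 \to j_! j^* \mathbf{G}_m \to \mathbf{G}_m \to i_* i^* \mathbf{G}_m \to 0$ must be identified with the isomorphism of \cite[Proposition 2.2.3(d)]{ADT}; this identification is what forces the archimedean cochains to be zero under $\alpha$, so that the final sum legitimately runs over $T_f$ rather than $T$. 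Without this compatibility, the decomposition formula would only be determined up to an ambient automorphism of $\Z/n\Z$.
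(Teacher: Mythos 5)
Your proposal is correct and follows essentially the same route as the paper: it tracks the cocycle $w$ through the enlargement map $H^3_c(\pi,\Z/n\Z)\to H^3_c(\pi_T,\Z/n\Z)$ to obtain $[(\kappa_T^*(w),(b_{-,v})_v)]$, trivializes globally via $H^3(\pi_{T_f},\Z/n\Z)=0$, subtracts the coboundary of $(b_+,(0)_v)$, and evaluates with $\inv'_T$ using the calibration of $\inv$, $\inv_T$, $\inv'_T$ established in steps (1)--(5). The compatibility issues you flag (in particular the identification of $\gamma$ with Milne's Proposition 2.2.3(d), which forces the archimedean components to vanish) are exactly the ones the paper addresses.
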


\begin{proof}
For $\beta_v := b_{-,v}$, $\sum_{v \in T} (\beta_v) := \textstyle{\sum_*}((\beta_v)_{v \in T}) \in L_T^{inv}([\rho \circ \kappa_T])$ 
is well-defined and $CS_{T_f, c}([\rho \circ \kappa_{T_f}])$ is also an element of a $\frac{1}{n}\Z/\Z$-torsor $L_T^{inv}([\rho \circ \kappa_T])$ so their difference is an element of $\frac{1}{n}\Z/\Z$. 
It is easy to show that $CS_{T_f, c}([\rho \circ \kappa_{T_f}])=\textstyle{\sum_*}((b_{+,v})_{v \in T})$ and 
\begin{equation*}
CS_c([\rho]) = \sum_{v \in T_f} \inv_v([b_{-,v}-b_{+,v}]) = \sum_{v \in T} (\beta_v) - CS_{T_f, c}([\rho \circ \kappa_{T_f}]).
\qedhere
\end{equation*}
\end{proof}

\section{Examples} \label{Sec4}
In this section, we compute the arithmetic Chern-Simons invariants in certain cases. 
In the first 2 subsections, we concentrate on the case $n=2$ and provide some explicit examples of the computation of $CS_c([\rho])$. 
In the last subsection, we prove that for $n \geq 2$ and a finite group $G$, there are infinitely many number fields $F$ whose arithmetic Chern-Simons invariants associated to the gauge group $A=G \rtimes \Z/n \Z$ (the semidirect product of $G$ by $\Z/n \Z$) are non-vanishing, which generalizes the result of \cite{BCG}. 

\subsection{The totally real analogue with $n=2$}\label{Sub41}

Since most of the results in this section are analogues to Section 5 of \cite{ACST}, proofs are generally omitted there. 
Under the assumption $\mu_n(\overline{F}) =\mu_n(F)$, (5.1)-(5.5) of \cite{ACST} can be extended to every number field $F$. 
Since we use $\pi^{un}$ rather than $\pi$, Assumption 5.7 and 5.9 of \cite{ACST} should be refined by adding conditions about unramifiedness at real places.

\begin{assumption} \label{ass41}
(1) $c = \alpha \cup \epsilon \in H^3(A, \Z/2\Z)$ with surjective $\alpha : A \rightarrow \Z/2\Z$, and $\epsilon \in H^2(A, \Z/2\Z)$ corresponding to a central extension 
$$ E : 0 \rightarrow \Z/2\Z \rightarrow \Gamma \overset{\varphi}{\rightarrow} A \rightarrow 1. $$
(2) There are Galois extensions of $F$:
$$ F \subset F^{\alpha} \subset F^- \subset F^+ $$
such that $\Gal(F^-/F) \cong A$, $\Gal(F^+/F) \cong \Gamma$, $F^-/F$ is unramified at all primes (including real primes) and $F^+/F$ is unramified at the primes above $2$. \\
(3) $F^{\alpha}$ is the fixed field of the kernel of the composition $\Gal(F^-/F) \xrightarrow{\simeq} A \overset{\alpha}{\rightarrow} \Z/2\Z$. \\
(4) $\rho : \pi^{un} \rightarrow A$ is given by the composition $\pi^{un} \rightarrow \Gal(F^-/F) \xrightarrow{\simeq} A$, where $\pi^{un} \rightarrow \Gal(F^-/F)$ is the natural projection map. 
\end{assumption}

\begin{remark} \label{rmk42}
Let $B=\left \{ F_1, \cdots, F_m \right \}$ be the set of subfields of $F^-$ which are quadratic extensions of $F$. Then $B$ is in bijection with the set of surjective homomorphisms $\Gal(F^-/F) \rightarrow \Z/2\Z$. So for each $F_i \in B$, there is a unique surjective homomorphism $\alpha_i : \Gal(F^-/F) \rightarrow \Z/2\Z$ such that $F^{\alpha_i}=F_i$. 
\end{remark}

\noindent Let $S_f$ be the set of finite primes of $F$ ramified in $F^+$, $S_2$ be the set of primes of $F$ above $2$ ($S_f \cap S_2 = \phi$ by the assumption) and $T_f := S_f \cup S_2$. Denote the map $\gamma$ of \cite[Lemma 5.2]{ACST} for the settings (a) and (b) below by $\gamma_{+}$ and $\gamma_{-,v}$ (for each $v \in T_f$), respectively. \\
(a) $\widetilde{A}=\pi_T$, $f=\rho \circ \kappa_T$, 
$\widetilde{f} : \pi_T \rightarrow \Gal(F^+/F) \cong \Gamma$ where $\pi_T \rightarrow \Gal(F^+/F)$ is the projection. \\
(b) $\widetilde{A}=\pi_v$, $f= \rho \circ \kappa_v$, $\widetilde{f} : \pi_v \rightarrow \pi_v/I_v \overset{f'}{\rightarrow} \Gamma$ for $f'$ satisfying $\varphi \circ f' = (\rho \circ \kappa_v) \mid_{\pi_v/I_v} \in \Hom(\pi_v/I_v, A)$. \\

\noindent Denote $\gamma_{+,v} := \gamma_{+} \circ i_{v, T} \in \Hom(\pi_v, \Z/2\Z)$ and $\psi_v := \gamma_{+,v}-\gamma_{-,v}$ for each $v \in T_f$. Note that $\gamma_{-,v}$ is unramified since it factors through $\pi_v/I_v$. For $v \in X_{\infty}$, $\kappa_v=0$ so $\inv_v(\rho_v^*(\alpha) \cup \psi_v)=0$ for $\rho_v := \rho \circ \kappa_v$.

\begin{theorem} \label{thm43}
Suppose Assumption \ref{ass41} holds. Then,
\begin{equation*}
CS_c([\rho])=\sum_{v \in T_f} \inv_v(\rho_v^*(\alpha) \cup \psi_v) \equiv \frac{r}{2} \,\, (mod \,\, \Z)
\end{equation*}
where $r$ is the number of primes in $S_f$ which are inert in $F^{\alpha}$. 
\end{theorem}

\begin{assumption} \label{ass44}
$K \subset L$ and $E$ are number fields, $D>1$ is a squarefree integer such that \\
(1) $\Gal(L/\Q) \cong \Gamma$ and $d_L$ is odd. \\
(2) $\Gal(K/\Q) \cong A$ and $K$ is totally real. \\
(3) $[E:\Q]$ is odd and $E \cap L  = \Q$.  \ \\
(4) $D$ divides $d_K$, $\Q(\sqrt{D}) \subset K$ and $K/\Q(\sqrt{D})$ is unramified at all primes.
\end{assumption}

\begin{proposition} \label{prop45}
Let $t>1$ be a squarefree integer prime to $D$ and $F=\Q(\sqrt{Dt})E$ satisfies 
$\Q(\sqrt{Dt}) \cap L =\Q$. 
Then $F^-=KF$ and $F^+=LF$ satisfy Assumption \ref{ass41}. 
\end{proposition}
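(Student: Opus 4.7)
The plan is to verify Assumption \ref{ass41} for the data $F$, $F^- = KF$, $F^+ = LF$. Condition (1) is intrinsic to $A$, $\Gamma$, $c$, and conditions (3), (4) merely define $F^{\alpha}$ and $\rho$ once (2) is established, so the real content is condition (2): the Galois-group identifications $\Gal(F^-/F) \cong A$ and $\Gal(F^+/F) \cong \Gamma$, together with the unramifiedness of $F^-/F$ at every prime and of $F^+/F$ at primes above $2$.

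I would first prove $L \cap F = \Q$, which automatically gives $K \cap F = \Q$ and yields both Galois-group identifications (as $L/\Q$ is Galois). Because $E \cap L = \Q$ and $[E:\Q]$ is odd, we have $\Gal(LE/E) \cong \Gamma$ and $E \cap \Q(\sqrt{Dt}) = \Q$, so it suffices to show $\sqrt{Dt} \notin LE$. If $\sqrt{Dt} \in LE$, then $E(\sqrt{Dt})$ is a quadratic subextension of $LE/E$ and hence equals $E \cdot M$ for some quadratic subfield $M = \Q(\sqrt{d}) \subset L$. Writing $\sqrt{Dt} = a + b\sqrt{d}$ with $a, b \in E$ and using that $E$ contains no quadratic subfield, one forces $a = 0$ and $b \in \Q$, whence $\Q(\sqrt{Dt}) = \Q(\sqrt{d}) \subset L$, contradicting $\Q(\sqrt{Dt}) \cap L = \Q$.

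For the unramifiedness assertions, $L/\Q$ is unramified at $2$ (since $d_L$ is odd), so base change immediately yields $F^+/F$ and $F^-/F$ unramified at primes above $2$. For odd $p \nmid D$, the extension $K/\Q$ is unramified at $p$ (combine $\Q(\sqrt{D})/\Q$ unramified at such $p$ with $K/\Q(\sqrt{D})$ unramified everywhere), so base change again gives $F^-/F$ unramified above $p$. For real places, $K$ is totally real and $K \cap F = \Q$, so at any real place of $F$ the completion $F^- \otimes_F \R \cong K \otimes_{\Q} \R$ is a product of copies of $\R$, yielding unramifiedness at infinity.

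The main obstacle is unramifiedness of $F^-/F$ at the odd primes $p \mid D$, where $K/\Q$ genuinely ramifies so pure base change fails. I would factor the extension through $F \subset F(\sqrt{t}) \subset F^-$. The identity $\sqrt{D} = \sqrt{Dt}/\sqrt{t}$ together with $\sqrt{Dt} \in F$ gives $F(\sqrt{t}) = F \cdot \Q(\sqrt{D})$, so the top step $F^-/F(\sqrt{t})$ is the base change of the everywhere-unramified $K/\Q(\sqrt{D})$, and is therefore unramified. The bottom step $F(\sqrt{t})/F$ is the base change of $\Q(\sqrt{t})/\Q$, which is unramified at any odd $p \mid D$ because $\gcd(D, t) = 1$. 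Stacking the two unramified steps completes condition (2).
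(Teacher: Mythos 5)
Your proof is correct, and it is worth noting how it relates to the paper's. The paper's proof has the same skeleton — everything reduces to $F\cap L=\Q$, the real places (the genuinely new point in the totally real setting), and the unramifiedness at finite primes — but it establishes $F\cap L=\Q$ by a pure degree count, computing $[EE'\cap L:\Q]=[EE':\Q][L:\Q]/[EE'L:\Q]=1$ for $E'=\Q(\sqrt{Dt})$ using that $E'/\Q$ and $L/\Q$ are Galois and pairwise linearly disjoint from $E$; it then handles real places exactly as you do ($K$ totally real plus base change) and defers all the finite-prime unramifiedness to \cite[Proposition 5.10]{ACST}. You instead prove $F\cap L=\Q$ by an element-wise argument (writing $\sqrt{Dt}=a+b\sqrt{d}$ over $E$ and exploiting that $E$ has no quadratic subfield), which is more hands-on but equally valid — just make explicit the final step that $\sqrt{Dt}\notin LE$ forces $F\cap LE=E$ and hence $F\cap L\subseteq E\cap L=\Q$. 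Your treatment of the finite primes, in particular the factorization $F\subset F(\sqrt{t})=F(\sqrt{D})\subset F^-$ to handle the odd $p\mid D$ where $K/\Q$ ramifies, is precisely the content of the cited \cite[Proposition 5.10]{ACST}, so your write-up has the advantage of being self-contained where the paper's is not. Both approaches deliver the same conclusion with comparable effort; the paper's degree computation is slicker for the disjointness, while your version makes visible exactly where each hypothesis (oddness of $[E:\Q]$, $\gcd(D,t)=1$, oddness of $d_L$, everywhere-unramifiedness of $K/\Q(\sqrt{D})$) is used.
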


\begin{proof}
Denote $E' = \Q(\sqrt{Dt})$. By the relations $E \cap L = E' \cap L = E \cap E' = \Q$ and the fact that $E'/\Q$ and $L/\Q$ are Galois, 
$$
[EE' \cap L : \Q]=\frac{[EE': \Q][L : \Q]}{[EE'L : \Q]} = \frac{[E: \Q][E': \Q][L : \Q]}{[EE'L : \Q]}=\frac{[EL: \Q][E'L: L]}{[(EL)(E'L): \Q]}=1
$$
(the last equality is due to $EL \cap E'L=L$ from $([EL:L], [E'L:L])=([E:\Q], 2)=1$) so $F \cap L=\Q$. 
For each real prime $v$ of $F$, $K/\Q$ is unramified at $v$ so $F^-/F$ is also unramified at $v$. Other conditions can be proved exactly as \cite[Proposition 5.10]{ACST}.
\end{proof}

\noindent Now suppose that $F^{\alpha}=F(\sqrt{M}) \subset F^-$ for some divisor $M>0$ of $D$. Denote $\Q_1 = \Q(\sqrt{M})$, $\Q_2 = \Q(\sqrt{N})$ for $N=Dt/M$ and $D_{L/K} := Nm_{K/\Q}(\Delta_{L/K})=d_L/d_K^2$.

\begin{theorem} \label{thm46}
Suppose Assumption \ref{ass44} holds and choose $F \subset F^{\alpha} \subset F^- \subset F^+$ as above, $\rho$ and $c$ as Assumption \ref{ass41}. Then,
\begin{equation*}
CS_c([\rho]) \equiv \frac{s}{2} \,\, (mod \,\, \Z)
\end{equation*}
where $s$ is the number of prime divisors of $(D_{L/K}, D)$ which are inert either in $\Q_1$ or in $\Q_2$. 
\end{theorem}

\begin{proof}
We use Theorem \ref{thm43}. Following the proof of \cite[Theorem 5.13]{ACST}, we can deduce that
$$ S_f =\left \{  \mathfrak{p} \in \Spec(\mathcal{O}_F) : \mathfrak{p} \mid D_{L/K}, \, \mathfrak{p} \nmid t \right \}. $$
Let $F_0=\Q(\sqrt{Dt})$, $\mathfrak{p} \in S_f$ be a prime above $p$ and $\mathfrak{p}_0=\mathfrak{p} \cap F_0$ be a prime of $F_0$. 
Suppose $p$ does not divide $D$. 
If $p$ splits in $F_0$, then $p\mathcal{O}_{F}=\mathfrak{p}_1 \cdots \mathfrak{p}_m$ for some even integer $m$ and $\mathfrak{p}_i$ are all inert or all split in $F^{\alpha}$, so they do not change the parity of $r$ of Theorem \ref{thm43}. 
If $p$ is inert in $F_0$, then $\mathfrak{p}_0$ splits in $F_0(\sqrt{M})$ by \cite[Lemma 5.12]{ACST} and $[F : F_0]=[E:\Q]$ is odd so $\mathfrak{p}$ also splits in $F^{\alpha}$. (Note that $E \cap F_0=\Q$.) \\
Now suppose that $p$ divides $D$. Then $\mathfrak{p}$ is the only prime of $F$ above $p$. Since $[F : F_0]$ is odd, $\mathfrak{p}$ is inert in $F^{\alpha}$ if and only if $\mathfrak{p}_0$ is inert in $F_0(\sqrt{M})$. This is equivalent to the fact that $p$ is inert either in $\Q_1$ or in $\Q_2$.
\end{proof}

\begin{remark} \label{rmk47}
In Proposition 5.10 of \cite{ACST}, $F=\Q(\sqrt{-\left | D \right | \cdot t})$ can be generalized to $F=\Q(\sqrt{-\left | D \right | \cdot t}) E$ for every number field $E$ of odd degree such that $E \cap L = \Q$. 
\end{remark}

\vspace{1em}

Now we give explicit computations of $CS_c$ for $A = \Z/2\Z$, $A = \Z/2\Z \times \Z/2\Z$ and $A=S_4$. 

\textbf{Case I. $A=\Z/2\Z$}: Let $A=\Z/2\Z$, $\Gamma = \Z/4\Z$ and $p$ be a prime congruent to $1$ modulo $4$. Then $K=\Q(\sqrt{p})$, the degree 4 subfield $L$ of $\Q(\mu_p)$ and $D=p$ satisfy Assumption \ref{ass44}. Let $F=\Q(\sqrt{pt})E$ for a squarefree integer $t>1$ prime to $p$ and a number field $E$ of odd degree.  

\begin{proposition} \label{prop48}
Let $F^{\alpha}=F^-=FK$, $F^+=LF$ and $\rho$ and $c$ be chosen as above. Then,
\begin{equation*}
CS_c([\rho]) = \frac{1}{2} \Longleftrightarrow  \left ( \frac{t}{p} \right )=-1.
\end{equation*}
\end{proposition}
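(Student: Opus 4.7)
The plan is to reduce the claim directly to Theorem~\ref{thm46} applied with $K = \mathbb{Q}(\sqrt{p})$, $L$ the quartic subfield of $\mathbb{Q}(\mu_p)$, and $D = p$, and then specialize the integer $s$ appearing there to the case at hand.

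First I would check that the ambient hypotheses hold. Since $p \equiv 1 \pmod{4}$, we have $d_K = p$ and $\mathbb{Q}(\sqrt{D}) = K$, giving conditions (2) and (4) of Assumption~\ref{ass44} immediately (the extension $K/\mathbb{Q}(\sqrt{D})$ is trivial, hence unramified). Condition (3) reduces to $E \cap L = \mathbb{Q}$, which is automatic: $L/\mathbb{Q}$ is Galois of degree $4$, so $[E \cap L : \mathbb{Q}]$ divides $\gcd([E:\mathbb{Q}], 4) = 1$. For (1), the conductor-discriminant formula yields $|d_L| = p^3$, since the three non-trivial characters of $\operatorname{Gal}(L/\mathbb{Q})$ are non-trivial Dirichlet characters modulo the prime $p$ and therefore each has conductor $p$; in particular $d_L$ is odd. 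Similarly, the hypothesis $\mathbb{Q}(\sqrt{Dt}) \cap L = \mathbb{Q}$ of Proposition~\ref{prop45} holds because the unique quadratic subfield of the cyclic quartic $L$ is $\mathbb{Q}(\sqrt{p})$, and $\mathbb{Q}(\sqrt{pt}) \neq \mathbb{Q}(\sqrt{p})$ for squarefree $t > 1$ coprime to $p$. So Proposition~\ref{prop45} produces $F^- = KF$ and $F^+ = LF$ satisfying Assumption~\ref{ass41}.

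Next I would pin down the parameters entering Theorem~\ref{thm46}. Since $A = \mathbb{Z}/2\mathbb{Z}$, the surjection $\alpha$ forces $F^{\alpha} = F^- = F(\sqrt{p})$, so the divisor $M$ of $D = p$ must be $M = p$; this gives $N = Dt/M = t$, $\mathbb{Q}_1 = \mathbb{Q}(\sqrt{p})$, and $\mathbb{Q}_2 = \mathbb{Q}(\sqrt{t})$. Combining $|d_L| = p^3$ with $d_K = p$ yields $D_{L/K} = d_L/d_K^2 = p$, so the only prime divisor of $\gcd(D_{L/K}, D)$ is $p$ itself.

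Finally I would count $s$. The prime $p$ ramifies in $\mathbb{Q}_1 = \mathbb{Q}(\sqrt{p})$, so it is not inert there, and the contribution to $s$ reduces to whether $p$ is inert in $\mathbb{Q}_2 = \mathbb{Q}(\sqrt{t})$. Since $p$ is odd and coprime to $t$, the prime $p$ is unramified in $\mathbb{Q}_2$ and is inert exactly when $t$ is a non-residue modulo $p$, i.e.\ $(t/p) = -1$. Therefore $s = 1$ when $(t/p) = -1$ and $s = 0$ otherwise, and Theorem~\ref{thm46} gives $CS_c([\rho]) \equiv s/2 \pmod{\mathbb{Z}}$, which is the claim. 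The only non-routine step is the conductor-discriminant computation of $D_{L/K}$; everything else is a direct specialization of the general formula.
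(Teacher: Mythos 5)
Your argument is correct and is exactly the intended route: the paper omits the proof of Proposition \ref{prop48} as a direct specialization of Theorem \ref{thm46} (with $K=\Q(\sqrt{p})$, $L$ the quartic subfield of $\Q(\mu_p)$, $D=M=p$, $N=t$), and your verification of Assumption \ref{ass44}, the computation $d_L=p^3$, $D_{L/K}=p$, and the count $s=1 \Leftrightarrow (t/p)=-1$ all check out.
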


\textbf{Case II. $A=\Z/2\Z \times \Z/2\Z$}: Let $A=\Z/2\Z \times \Z/2\Z$, $\Gamma=\mathcal{Q}_8$ (the quaternion group) and $d_1, d_2>1$ be squarefree integers such that $d_1 \equiv d_2 \equiv 1 \,\, (mod \,\, 4)$ and $(d_1, d_2)=1$. 
Suppose that there is a number field $L$ containing $\Q(\sqrt{d_1}, \sqrt{d_2})$ such that $\Gal(L/\Q) \cong \mathcal{Q}_8$ and $d_L$ is odd. 
Then $K=\Q(\sqrt{d_1}, \sqrt{d_2})$, $L$ and $D=d_1d_2$ satisfy Assumption \ref{ass44}. 
By \cite[Proposition 5.15]{ACST}, a prime $p$ divides $(D, D_{L/K})$ if and only if $p$ divides $D$. 
Let $F=\Q(\sqrt{d_1d_2 \cdot t})E$ ($t>1$ is a squarefree integer prime to $d_1d_2$, $E$ is a number field of odd degree) and 
$$F_1=F(\sqrt{d_1}), \, F_2=F(\sqrt{d_2}), \, F_3=F(\sqrt{d_1d_2}). $$
Since $\Hom(A, \Z/2\Z)$ is of order $4$, these are all quadratic subfields of $FK$ over $F$. Suppose that $\Q(\sqrt{d_1d_2 t}) \cap L \neq \Q$. Then $\sqrt{d_1d_2 t} \in L$ so $L=\Q(\sqrt{d_1}, \sqrt{d_2}, \sqrt{d_1d_2 t})=\Q(\sqrt{d_1}, \sqrt{d_2}, \sqrt{t})$ and $\Gal(L/\Q) \cong (\Z/2\Z)^3$, which is a contradiction. Thus $\Q(\sqrt{d_1d_2 \cdot t}) \cap L = \Q$.

\begin{proposition} \label{prop49}
Let $F^{\alpha_i}=F_i$, $F^-=FK$, $F^+=FL$ and $\rho$ and $c_i=\alpha_i \cup \epsilon$ be chosen as above. Then,
\begin{equation*}
\begin{split}
CS_{c_1}([\rho]) &= \frac{1}{2} \Longleftrightarrow  \prod_{p \mid d_1}\left ( \frac{d_2t}{p} \right ) \cdot \prod_{p \mid d_2}\left ( \frac{d_1}{p} \right )=-1. \\
CS_{c_2}([\rho]) &= \frac{1}{2} \Longleftrightarrow  \prod_{p \mid d_1}\left ( \frac{d_2}{p} \right ) \cdot \prod_{p \mid d_2}\left ( \frac{d_1t}{p} \right )=-1. \\
CS_{c_3}([\rho]) &= \frac{1}{2} \Longleftrightarrow  \prod_{p \mid d_1d_2}\left ( \frac{t}{p} \right ) =-1. 
\end{split}
\end{equation*}
\end{proposition}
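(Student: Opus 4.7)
The plan is to apply Theorem \ref{thm46} directly, once the data $(M, N)$ are identified for each of the three choices $\alpha_i$ of surjection $A \to \Z/2\Z$. Writing $A = \Gal(FK/F) \cong \Z/2\Z \times \Z/2\Z$ with fixed fields $F_1, F_2, F_3$ corresponding to its three index-two subgroups, the homomorphism $\alpha_i$ is pinned down by $F^{\alpha_i} = F_i$. Since $F_1 = F(\sqrt{d_1})$, $F_2 = F(\sqrt{d_2})$, $F_3 = F(\sqrt{d_1 d_2})$ and $D = d_1 d_2$, $t$ is as in Proposition \ref{prop45}, the parameter $M \mid D$ in Theorem \ref{thm46} is $d_1$, $d_2$, $d_1 d_2$ respectively, and correspondingly $N = Dt/M$ equals $d_2 t$, $d_1 t$, $t$. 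The quadratic fields $\Q_1 = \Q(\sqrt{M})$ and $\Q_2 = \Q(\sqrt{N})$ appearing in Theorem \ref{thm46} are then read off.

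Next, the primes to track are the divisors of $(D_{L/K}, D)$. By the result cited just before the proposition (\cite[Proposition 5.15]{ACST}), this set is exactly the set of prime divisors of $D = d_1 d_2$. Fix such a prime $p$; by hypothesis $d_1, d_2$ are odd and coprime, so $p \mid d_1$ or $p \mid d_2$ but not both. In each of the three cases, exactly one of $\Q_1, \Q_2$ is ramified at $p$ (the one whose defining square root is divisible by $p$), hence not inert there, and $p$ contributes to the count $s_i$ in Theorem \ref{thm46} if and only if $p$ is inert in the \emph{other} of the two fields, which by definition of the Legendre symbol is the condition that the corresponding symbol equals $-1$.

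Carrying this out for $i = 1$: a prime $p \mid d_1$ is ramified in $\Q_1 = \Q(\sqrt{d_1})$, so it contributes iff $\left( \frac{d_2 t}{p} \right) = -1$; a prime $p \mid d_2$ is ramified in $\Q_2 = \Q(\sqrt{d_2 t})$ (using $p \nmid t$), so it contributes iff $\left( \frac{d_1}{p} \right) = -1$. Hence $(-1)^{s_1} = \prod_{p \mid d_1}\left( \frac{d_2 t}{p} \right) \cdot \prod_{p \mid d_2} \left( \frac{d_1}{p} \right)$, and by Theorem \ref{thm46} this equals $-1$ iff $CS_{c_1}([\rho]) = 1/2$. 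The computation for $i = 2$ is the same with the roles of $d_1$ and $d_2$ swapped. For $i = 3$, both kinds of primes ramify in $\Q_1 = \Q(\sqrt{d_1 d_2})$, so every such $p$ contributes iff $\left( \frac{t}{p} \right) = -1$, giving $(-1)^{s_3} = \prod_{p \mid d_1 d_2} \left( \frac{t}{p} \right)$.

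The main (modest) obstacle is the bookkeeping at primes of the wrong residue behavior: one has to be careful that $p \mid d_1$ does not accidentally ramify in $\Q(\sqrt{d_2 t})$ (it doesn't, since $p \nmid d_2 t$ as $d_1, d_2$ are coprime and $p \nmid t$) and that the congruence conditions $d_1, d_2 \equiv 1 \pmod{4}$ keep the prime $2$ out of the picture so no sign subtleties arise. Once these are verified, the three formulae fall out of Theorem \ref{thm46} by direct substitution.
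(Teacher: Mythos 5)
Your proposal is correct and is exactly the argument the paper intends (the paper omits the proof, but Proposition \ref{prop49} is set up as a direct substitution into Theorem \ref{thm46} with $M=d_1,d_2,d_1d_2$ and $N=Dt/M$, using the cited fact that the prime divisors of $(D,D_{L/K})$ are exactly those of $D=d_1d_2$). Your bookkeeping at the ramified primes and the parity translation $(-1)^{s_i}$ into the stated Legendre-symbol products is accurate.
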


\noindent Two examples of $L, K$ for $A=\Z/2\Z \times \Z/2\Z$, $\Gamma=\mathcal{Q}_8$ in \cite[p. 21]{ACST} can be used in our situation. Let 
$$g(x) = x^8 - x^7 +98x^6 -105x^5 +3191x^4 +1665x^3 +44072x^2 +47933x +328171$$
(LMFDB \cite{LMF1}) be the irreducible polynomial over $\Q$ and $\beta$ be a root of $g(x)$. Also let
\begin{center}
$L=\Q(\beta)$, $K=\Q(\sqrt{5}, \sqrt{29})$ and $D=5 \cdot 29$.
\end{center}
Then $K \subset L$, $\Gal(L/\Q) \cong \mathcal{Q}_8$ and $d_L=3^4 \cdot 5^6 \cdot 29^6$ is odd. Let $F=\Q(\sqrt{145 \cdot t})E$, where $t>1$ is a squarefree integer prime to $145$ and $E$ is a number field of odd degree. 

\begin{corollary} \label{cor410}
Let $\rho$ and $c_i=\alpha_i \cup \epsilon$ be chosen as above. Then, 
\begin{equation*}
\begin{split}
CS_{c_1}([\rho]) &= \frac{1}{2} \Longleftrightarrow  \left ( \frac{t}{5} \right )=-1 \Longleftrightarrow  t \equiv \pm 2 \,\, (mod \,\, 5). \\
CS_{c_2}([\rho]) &= \frac{1}{2} \Longleftrightarrow  \left ( \frac{t}{29} \right )=-1. \\
CS_{c_3}([\rho]) &= \frac{1}{2} \Longleftrightarrow  \left ( \frac{t}{5} \right )=- \left ( \frac{t}{29} \right ). 
\end{split}
\end{equation*}
\end{corollary}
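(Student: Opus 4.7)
The plan is to deduce the corollary by direct specialization of Proposition \ref{prop49} to the numerical setting $d_1 = 5$, $d_2 = 29$, $D = 145$, together with the specific choice $L = \Q(\beta)$ and $K = \Q(\sqrt{5}, \sqrt{29})$ described above. First I would check that the hypotheses of Assumption \ref{ass44} are satisfied: the relation $K \subset L$ and $\Gal(L/\Q) \cong \mathcal{Q}_8$ (so $K$ is totally real), $d_L = 3^4 \cdot 5^6 \cdot 29^6$ is odd, and $D = 145 = 5 \cdot 29$ divides $d_K$, are all recorded just above the corollary. For the parameter $t$, the conditions $t > 1$ squarefree and $(t, 145) = 1$ hold by hypothesis, and the condition $\Q(\sqrt{145 t}) \cap L = \Q$ needed in Proposition \ref{prop45} is guaranteed by the argument in the paragraph preceding Proposition \ref{prop49} (if not, $L$ would contain $\sqrt{t}$ and be abelian of exponent 2, contradicting $\Gal(L/\Q) \cong \mathcal{Q}_8$). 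One also needs $E$ of odd degree with $E \cap L = \Q$, which is by assumption.

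With the hypotheses verified, the three criteria in Proposition \ref{prop49} specialize to
\[
CS_{c_1}([\rho]) = \tfrac{1}{2} \iff \left(\tfrac{29 t}{5}\right)\left(\tfrac{5}{29}\right) = -1, \quad
CS_{c_2}([\rho]) = \tfrac{1}{2} \iff \left(\tfrac{29}{5}\right)\left(\tfrac{5 t}{29}\right) = -1, \quad
CS_{c_3}([\rho]) = \tfrac{1}{2} \iff \left(\tfrac{t}{5}\right)\left(\tfrac{t}{29}\right) = -1,
\]
since each $d_i$ is prime. The main computational step is to evaluate $\left(\tfrac{29}{5}\right)$ and $\left(\tfrac{5}{29}\right)$. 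Both $5$ and $29$ are congruent to $1 \pmod 4$, so quadratic reciprocity gives $\left(\tfrac{5}{29}\right) = \left(\tfrac{29}{5}\right) = \left(\tfrac{4}{5}\right) = 1$. Substituting, the criterion for $c_1$ reduces to $\left(\tfrac{t}{5}\right) = -1$, the criterion for $c_2$ to $\left(\tfrac{t}{29}\right) = -1$, and the criterion for $c_3$ to $\left(\tfrac{t}{5}\right) = -\left(\tfrac{t}{29}\right)$.

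Finally, the equivalence $\left(\tfrac{t}{5}\right) = -1 \iff t \equiv \pm 2 \pmod 5$ is immediate since the nonzero squares mod $5$ are $\{\pm 1\}$ and the non-residues are $\{\pm 2\}$, giving the explicit congruence form stated in the first line. The other two lines are already in final form. The only real subtlety is the verification that the proposition's hypotheses apply; once that is done, the corollary is purely a reciprocity computation, so I do not expect any genuine obstacle beyond being careful that the Galois and unramifiedness data for $L/\Q$ transfer to $FL/F$ correctly, which is handled uniformly by Proposition \ref{prop45}.
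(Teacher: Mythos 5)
Your proof is correct and is exactly the intended argument: the paper omits the proof of this corollary, which is just the specialization of Proposition \ref{prop49} to $d_1=5$, $d_2=29$ followed by the quadratic reciprocity computation $\left(\frac{5}{29}\right)=\left(\frac{29}{5}\right)=\left(\frac{4}{5}\right)=1$, as you carried out. Your verification of the hypotheses (oddness of $d_L$, $K$ totally real, $\Q(\sqrt{145t})\cap L=\Q$) matches the data recorded in the paragraph preceding the corollary.
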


\noindent Let 
$$g(x)=x^8-x^7-34x^6+29x^5+361x^4-305x^3-1090x^2+1345x-395$$
(LMFDB \cite{LMF2}) be the irreducible polynomial over $\Q$ and $\beta$ be a root of $g(x)$. Also let
\begin{center}
$L=\Q(\beta)$, $K=\Q(\sqrt{5}, \sqrt{21})$, $D=5 \cdot 21$
\end{center}
and $F=\Q(\sqrt{105 \cdot t})E$, where $t>1$ is a squarefree integer prime to $105$ and $E$ is a number field of odd degree.

\begin{corollary} \label{cor411}
Let $\rho$ and $c_i=\alpha_i \cup \epsilon$ be chosen as above. Then, 
\begin{equation*}
\begin{split}
CS_{c_1}([\rho]) &= \frac{1}{2} \Longleftrightarrow  \left ( \frac{t}{5} \right )=-1 \Longleftrightarrow  t \equiv \pm 2 \,\, (mod \,\, 5). \\
CS_{c_2}([\rho]) &= \frac{1}{2} \Longleftrightarrow  \left ( \frac{t}{3} \right )=- \left ( \frac{t}{7} \right ). \\
CS_{c_3}([\rho]) &= \frac{1}{2} \Longleftrightarrow  \left ( \frac{t}{3} \right ) \cdot \left ( \frac{t}{5} \right ) \cdot \left ( \frac{t}{7} \right ) =-1. 
\end{split}
\end{equation*}
\end{corollary}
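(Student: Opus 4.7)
The plan is to apply Proposition~\ref{prop49} with $d_1 = 5$ and $d_2 = 21$, parallel to the proof of Corollary~\ref{cor410}. First I would verify that the triple $(L, K, D=105)$ satisfies Assumption~\ref{ass44}, and then I would reduce the three Legendre symbol products in Proposition~\ref{prop49} to the asserted conditions on $t$.

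To check Assumption~\ref{ass44}: conditions (1) and (2) --- namely $\Gal(L/\Q) \cong \mathcal{Q}_8$, $d_L$ odd, $\Gal(K/\Q) \cong V_4$, and $K$ totally real --- are read off from the LMFDB entry \cite{LMF2} for $g(x)$, together with the observation that $5, 21 \equiv 1 \pmod 4$ forces $K$ to be totally real. Condition (3) is automatic since $[L:\Q] = 8$ is even, forcing $E \cap L = \Q$ for any number field $E$ of odd degree. Condition (4) requires $\Q(\sqrt{D}) = \Q(\sqrt{105}) \subset K$, which holds because $\sqrt{105} = \sqrt{5}\sqrt{21}$, and requires $K/\Q(\sqrt{105})$ to be everywhere unramified; the latter follows from the standard genus-theoretic argument for biquadratic fields $\Q(\sqrt{a},\sqrt{b})$ with $a,b$ coprime squarefree and both $\equiv 1 \pmod 4$. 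The intersection condition $\Q(\sqrt{105\,t}) \cap L = \Q$ needed to invoke Proposition~\ref{prop45} is verified exactly as in the paragraph preceding Proposition~\ref{prop49}.

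Next I would compute the relevant Legendre symbols using quadratic reciprocity. Since $5 \equiv 1 \pmod 4$, reciprocity gives $(5/p) = (p/5)$, and the nonzero squares mod $5$ are $\{1,4\}$, so
\[
(21/5) = (1/5) = 1, \qquad (5/3) = (3/5) = -1, \qquad (5/7) = (7/5) = (2/5) = -1.
\]

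Substituting $d_1 = 5$ and $d_2 = 3\cdot 7$ into Proposition~\ref{prop49}: for $CS_{c_1}$ the product is $(21t/5)\cdot(5/3)(5/7) = (t/5)\cdot 1 = (t/5)$, which equals $-1$ iff $(t/5) = -1$, i.e., $t \equiv \pm 2 \pmod 5$. For $CS_{c_2}$ the product is $(21/5)\cdot(5t/3)(5t/7) = 1\cdot (-(t/3))(-(t/7)) = (t/3)(t/7)$, which equals $-1$ iff $(t/3) = -(t/7)$. For $CS_{c_3}$ the product $(t/3)(t/5)(t/7)$ is already in the stated form. The main obstacle is the verification of the unramifiedness assertion in Assumption~\ref{ass44}(4) for this specific $L$; once granted (via the LMFDB data combined with genus theory), the rest of the proof is a routine Legendre symbol manipulation.
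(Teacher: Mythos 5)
Your proposal is correct and takes essentially the same route as the paper: Corollary \ref{cor411} is exactly Proposition \ref{prop49} specialized to $d_1=5$, $d_2=21$, and your evaluations $\left(\frac{21}{5}\right)=1$, $\left(\frac{5}{3}\right)=\left(\frac{5}{7}\right)=-1$ reduce the three criteria to the stated conditions on $t$. (One cosmetic slip: $K=\Q(\sqrt{5},\sqrt{21})$ is totally real because $5$ and $21$ are positive, not because they are $\equiv 1 \pmod 4$; the congruence condition is instead what guarantees the unramifiedness/odd-discriminant requirements.)
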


Now let $A=\Z/2\Z \times \Z/2\Z$ and $\Gamma = D_4$, the dihedral group of order $8$. Since $K$ is totally imaginary in the example in \cite{ACST}, we have to find a new example. Let
$$ g(x)=x^8-x^6-4x^4-16x^2+256 $$
(LMFDB \cite{LMF3}) which is irreducible over $\Q$ and let $\beta$ be a root of $g(x)$. Also let
\begin{center}
$L=\Q(\beta)$, $K=\Q(\sqrt{5}, \sqrt{29})$ and $D=5 \cdot 29$.
\end{center}
Then $K$, $L$ and $D$ satisfy Assumption \ref{ass44} and $D_{L/K}=5^2$. Let $F=\Q(\sqrt{145 \cdot t})E$ ($t>1$ is a squarefree integer prime to $145$ and $E$ is a number field of odd degree), $F_1=F(\sqrt{5})$, $F_2=F(\sqrt{29})$ and $F_3=F(\sqrt{145})$. We can check that $\Q(\sqrt{145 \cdot t}) \cap L = \Q$ as before. 

\begin{proposition} \label{prop412}
Let $F^{\alpha_i}=F_i$, $F^-=FK$, $F^+=FL$ and $\rho$ and $c_i=\alpha_i \cup \epsilon$ be chosen as above. Then, 
\begin{equation*}
\begin{split}
CS_{c_1}([\rho]) &= \frac{1}{2} \Longleftrightarrow  t \equiv \pm 2 \,\, (mod \,\, 5). \\
CS_{c_2}([\rho]) &= 0. \\
CS_{c_3}([\rho]) &= \frac{1}{2} \Longleftrightarrow  t \equiv \pm 2 \,\, (mod \,\, 5). 
\end{split}
\end{equation*}
\end{proposition}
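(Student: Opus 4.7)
The plan is to deduce Proposition \ref{prop412} as a direct application of Theorem \ref{thm46}, so the bulk of the proof reduces to a careful bookkeeping of splitting behavior of the prime $5$ in certain quadratic fields. First I would check that the hypotheses of Theorem \ref{thm46} are met: the LMFDB data for the polynomial $g(x)=x^8-x^6-4x^4-16x^2+256$ gives $\Gal(L/\Q)\cong D_4$ and odd discriminant, and $K=\Q(\sqrt{5},\sqrt{29})\subset L$ is totally real; together with $D_{L/K}=5^2$, this says Assumption \ref{ass44} holds with $D=5\cdot 29=145$. The remaining input is the disjointness $\Q(\sqrt{145\cdot t})\cap L=\Q$, which one verifies exactly as in the proof of Proposition \ref{prop49}: if it failed, then $\sqrt{145t}\in L$ would force $L$ to contain $\Q(\sqrt{5},\sqrt{29},\sqrt{t})$, which is a triquadratic extension and cannot sit inside a $D_4$-extension of degree $8$. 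Proposition \ref{prop45} then guarantees that $F^-=KF$, $F^+=LF$ together with $F^{\alpha_i}=F_i$ satisfy Assumption \ref{ass41}, so Theorem \ref{thm46} applies.

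Next, Theorem \ref{thm46} identifies $CS_{c_i}([\rho])$ with $s_i/2 \pmod{\Z}$, where $s_i$ counts the primes dividing $(D_{L/K},D)=(5^2,145)=5$ which are inert in either $\Q_1=\Q(\sqrt{M_i})$ or $\Q_2=\Q(\sqrt{Dt/M_i})$; in our situation the only prime to test is $p=5$. I would now simply run through the three cases for $M_i$, determined by $F^{\alpha_i}=F_i$.

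For $i=1$ we have $M_1=5$, $N_1=29t$; the prime $5$ ramifies in $\Q(\sqrt{5})$, and since $29\equiv 4\pmod 5$ is a square, $5$ is inert in $\Q(\sqrt{29t})$ iff $\left(\tfrac{t}{5}\right)=-1$, i.e.\ $t\equiv\pm 2\pmod 5$. For $i=2$ we have $M_2=29$, $N_2=5t$; here $5$ splits in $\Q(\sqrt{29})$ (again because $29$ is a square mod $5$) and ramifies in $\Q(\sqrt{5t})$, so $s_2=0$ unconditionally and $CS_{c_2}([\rho])=0$. For $i=3$, $M_3=145$ and $N_3=t$, so $5$ ramifies in $\Q(\sqrt{145})$ and is inert in $\Q(\sqrt{t})$ precisely when $\left(\tfrac{t}{5}\right)=-1$, i.e.\ $t\equiv\pm 2\pmod 5$.

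The proof is thus essentially mechanical once the hypotheses are verified. The only mildly subtle step I anticipate is the disjointness check $\Q(\sqrt{145t})\cap L=\Q$; this is where one uses the fact that $\Gal(L/\Q)\cong D_4$ has no quotient isomorphic to $(\Z/2\Z)^3$, ruling out the bad case. Everything else is a routine computation of Legendre symbols based on $29\equiv 4\pmod 5$ and the observation that $(D_{L/K},D)=5$ pins the test prime down to a single prime $p=5$.
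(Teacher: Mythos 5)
Your proof is correct and follows exactly the route the paper intends (the paper omits the proof as a routine analogue of the earlier cases): verify Assumption \ref{ass44} and the disjointness $\Q(\sqrt{145t})\cap L=\Q$, then apply Theorem \ref{thm46} with $(D_{L/K},D)=5$ and compute the splitting of $5$ in $\Q_1=\Q(\sqrt{M_i})$ and $\Q_2=\Q(\sqrt{145t/M_i})$ for each $i$. The Legendre-symbol bookkeeping (using $\left(\tfrac{29}{5}\right)=1$) matches the stated conclusions in all three cases.
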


\vspace{2em}

\noindent 

\textbf{Case III. $A=S_4$}:
Let $A=S_4$, the symmetric group of degree $4$. There is a unique surjective map $\alpha : A \rightarrow \Z/2\Z$ and three non-trivial central extensions $\Gamma_i$ ($1 \leq i \leq 3$) of $A$ by $\Z/2\Z$:
\begin{center}
$\Gamma_1=\GL(2, \mathbb{F}_3)$, $\Gamma_2$ is the transitive group `16T65' in \cite{GDB1} and $\Gamma_3=\SL(2, \Z/4\Z)$.
\end{center}

\noindent Let $\Gamma = \Gamma_1$ and suppose that $\Q  \subset \Q(\sqrt{D}) \subset K \subset L$ and $E$ satisfy Assumption \ref{ass44}. Let $F=\Q(\sqrt{Dt})E$, where $t>1$ is a squarefree integer prime to $D$. 
Since $\Gamma_1$ has a unique subgroup of order $24$ and $\Q(\sqrt{Dt}) \neq \Q(\sqrt{D})$, $\Q(\sqrt{Dt}) \cap L = \Q$. 

\begin{proposition} \label{prop413}
Let $F^{\alpha}=F(\sqrt{D})$, $F^-=FK$, $F^+=FL$ and $\rho$ and $c=\alpha \cup \epsilon$ as above. Then, 
\begin{equation*}
CS_c([\rho])=0.
\end{equation*}
\end{proposition}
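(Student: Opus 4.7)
The plan is to apply Theorem \ref{thm46} with $M = D$, so that $\Q_1 = \Q(\sqrt{D})$ and $\Q_2 = \Q(\sqrt{t})$, and to reduce the statement to showing $(D_{L/K}, D) = 1$. Since the count $s$ in that theorem only involves prime divisors of $(D_{L/K}, D)$, proving this coprimality immediately forces $s = 0$ and hence $CS_c([\rho]) = 0$. By Assumption \ref{ass44}, $d_L$ is odd, so $D_{L/K} = d_L/d_K^2$ and $D$ are both odd, and the argument below only needs to handle odd primes $p \mid D$.

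Fix such a $p$; I would like to show that $L/K$ is unramified at every prime $\mathfrak{p}$ of $K$ above $p$. First, the inertia $I(\mathfrak{p}/p) \subset \Gal(K/\Q) \cong S_4$ has order $2$, because $p$ is ramified in $\Q(\sqrt{D})$ while $K/\Q(\sqrt{D})$ is unramified by Assumption \ref{ass44}; since this subgroup maps surjectively onto $\Gal(\Q(\sqrt{D})/\Q) \cong \Z/2\Z$, its generator lies outside $A_4 = \ker(\alpha)$ and so is a transposition (the only odd elements of order $2$ in $S_4$). Let $I := I(\mathfrak{P}/p) \subset \Gal(L/\Q) \cong \GL(2, \mathbb{F}_3)$ for a prime $\mathfrak{P}$ above $\mathfrak{p}$; its image in $S_4$ is $I(\mathfrak{p}/p)$.

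The key step is to check that $I$ is cyclic of order $2$. For $p \neq 3$ this is automatic since $I$ is tame and hence cyclic with $|I| \in \{2, 4\}$. For $p = 3$, the wild inertia is a $3$-group whose image in $S_4$ is a $3$-subgroup of the order-$2$ group $I(\mathfrak{p}/p)$, hence trivial, so the wild inertia is contained in $\Gal(L/K) = \{\pm I\}$; since $\{\pm I\}$ has no nontrivial $3$-subgroup, the wild inertia is trivial and $I$ is again cyclic with $|I| \in \{2,4\}$. To rule out $|I| = 4$: in that case $I = \langle g \rangle$ with $g^2 = -I$, so $g$ has characteristic polynomial $x^2 + 1$, which is irreducible over $\mathbb{F}_3$. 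Hence $g$ has no eigenvector in $\mathbb{F}_3^2$, and its image $[g] \in \operatorname{PGL}(2, \mathbb{F}_3) \cong S_4$ fixes no point of $\mathbb{P}^1(\mathbb{F}_3)$; but a transposition in $S_4$ fixes two of the four points, a contradiction.

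Therefore $|I| = 2$, so $I \cap \Gal(L/K) = \{1\}$ and $L/K$ is unramified at $\mathfrak{p}$. Running over all $p \mid D$ gives $(D_{L/K}, D) = 1$, hence $s = 0$ and $CS_c([\rho]) = 0$ by Theorem \ref{thm46}. The main obstacle in this plan is the case $p = 3$, where wild ramification could a priori enlarge the inertia beyond a tame cyclic group; the resolution is the absence of $3$-torsion in the central kernel $\{\pm I\}$ of $\GL(2, \mathbb{F}_3) \to S_4$, which forces the wild part to collapse to zero.
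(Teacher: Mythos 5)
Your argument is correct and is exactly the intended (omitted) proof: you reduce via Theorem \ref{thm46} to showing $(D_{L/K},D)=1$, observe that the inertia at any $p\mid D$ in $\Gal(K/\Q)\cong S_4$ is generated by a transposition because $K/\Q(\sqrt{D})$ is unramified, and use that a transposition of $S_4\cong \mathrm{PGL}(2,\mathbb{F}_3)$ lifts only to involutions of $\GL(2,\mathbb{F}_3)$, so the cyclic inertia in $\Gal(L/\Q)$ has order $2$ and $L/K$ is unramified above $p$. Your explicit handling of possible wild ramification at $p=3$ is a careful check that the paper (deferring to the analogous argument in \cite{ACST}) leaves implicit.
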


\noindent Let
\begin{equation*}
\begin{split}
g_1(x) &= x^8-x^7-52x^6-66x^5+561x^4+1153x^3-851x^2-1857x+876 \\
g_2(x) &= x^4-x^3-5x^2+4x+3
\end{split}
\end{equation*}
(LMFDB \cite{LMF4}, \cite{LMF5}) be irreducible polynomials over $\Q$, $D=7537$ and $L$ and $K$ be the splitting fields of $g_1(x)$ and $g_2(x)$, respectively. Then $\Gal(L/\Q) \cong \Gamma$, $\Gal(K/\Q) \cong A$ and $K$ is totally real. 
Also $d_K=7537^{12}$ and $d_L=3^{24} \cdot 7537^{24}$ ($d_K$ and $d_L$ can be calculated from the data of \cite{LMF4}, \cite{LMF5} by using the conductor-discriminant formula), so $D_{L/K}=3^{24}$ and $D_{K/\Q(\sqrt{D})}=1$. 
Thus $\Q  \subset \Q(\sqrt{D}) \subset K \subset L$ and $E$ satisfy Assumption \ref{ass44} for a number field $E$ of odd degree such that $E \cap L = \Q$.
Let $F=\Q(\sqrt{7537 \cdot t})E$, where $t>1$ is a squarefree integer prime to $7537$.

\begin{corollary} \label{cor414}
Let $F^{\alpha}=F(\sqrt{7537})$, $F^-=FK$, $F^+=FL$ and $\rho$ and $c$ be chosen as above. Then, 
\begin{equation*}
CS_c([\rho])=0.
\end{equation*}
\end{corollary}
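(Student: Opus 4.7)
The plan is to reduce this corollary to Proposition \ref{prop413} by checking that the concrete fields $K$, $L$ and any admissible auxiliary field $E$ satisfy Assumption \ref{ass44} with $D = 7537$. Once the four conditions are verified, Proposition \ref{prop413} applies verbatim to $F = \Q(\sqrt{7537\cdot t})E$, $F^{\alpha} = F(\sqrt{7537})$, $F^{-} = FK$, $F^{+} = FL$, and yields $CS_c([\rho]) = 0$; there is no further computation to perform.

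First I would extract from the LMFDB entries for $g_1$ and $g_2$ the Galois-theoretic data: $L$ is the splitting field of $g_1$ with $\Gal(L/\Q) \cong \Gamma_1 = \GL(2,\mathbb{F}_3)$, while $K$ is the splitting field of the quartic $g_2$ with $\Gal(K/\Q) \cong S_4$ and real signature, so that $K$ is totally real. The discriminants $d_K = 7537^{12}$ and $d_L = 3^{24}\cdot 7537^{24}$ are both odd, which verifies condition (1) of Assumption \ref{ass44} and the oddness aspect of (2); $K$ being totally real completes (2). The relation $d_L = d_K^{[L:K]}\cdot \Nm_{K/\Q}(\Delta_{L/K})$ with $[L:K]=2$ then gives $D_{L/K} = d_L/d_K^{2} = 3^{24}$, matching the value recorded in the setup.

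Next I would verify condition (4). An $S_4$-Galois field has a unique quadratic subfield, namely the fixed field of $A_4$, which is $\Q(\sqrt{\operatorname{disc}(g_2)})$ modulo rational squares; a direct check (or reading off the LMFDB data for $g_2$) shows $\operatorname{disc}(g_2) \equiv 7537 \pmod{(\Q^{\times})^2}$, so $\Q(\sqrt{7537}) \subset K$, and obviously $7537 \mid d_K$. The vanishing $D_{K/\Q(\sqrt{D})} = 1$ shows that $K/\Q(\sqrt{7537})$ is unramified at every finite prime, and the total realness of $K$ handles the infinite places as well. Condition (3) is built into the corollary's choice of $E$. With all of Assumption \ref{ass44} in place, Proposition \ref{prop413} concludes the argument.

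The only step that is not purely bookkeeping is the identification of the quadratic subfield of $K$ with $\Q(\sqrt{7537})$; this is the main (and only modest) obstacle, but it reduces to a routine square-class computation of $\operatorname{disc}(g_2)$, or alternatively can be extracted from the subfield data of the $S_4$-field in LMFDB.
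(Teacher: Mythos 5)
Your proposal is correct and follows the same route as the paper, which gives no separate proof for this corollary: the paragraph preceding it verifies Assumption \ref{ass44} for $K$, $L$, $D=7537$ (using $\Gal(K/\Q)\cong S_4$, $K$ totally real, $d_K=7537^{12}$, $d_L=3^{24}\cdot 7537^{24}$, hence $D_{L/K}=3^{24}$ and $D_{K/\Q(\sqrt{D})}=1$), and the conclusion is then immediate from Proposition \ref{prop413}. Your extra remarks on identifying the quadratic subfield of $K$ with $\Q(\sqrt{\operatorname{disc}(g_2)})$ and on deducing unramifiedness from $D_{K/\Q(\sqrt{D})}=1$ are accurate elaborations of the same bookkeeping.
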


\noindent Now let $\Gamma = \Gamma_2$, 
\begin{equation*}
\begin{split}
& \scriptstyle g_1(x)=x^{16}+5x^{15}-790x^{14}-4654x^{13}+234254x^{12}+1612152x^{11}-33235504x^{10} \\ 
& \scriptstyle -263221982x^9+2331584048x^8+21321377994x^7-74566280958x^6-825209618478x^5 \\
& \scriptstyle +922238608476x^4+13790070608536x^3-6704968288135x^2-80794234036917x+87192014930816 
\end{split}
\end{equation*}
(Galoisdb \cite{GDB1}) and 
$$ g_2(x)=x^4-x^3-4x^2+x+2 $$
(LMFDB \cite{LMF6}) be irreducible polynomials over $\Q$, $D=2777$ and $L$ and $K$ be the splitting fields of $g_1(x)$ and $g_2(x)$, respectively. Then by \cite[Lemma 5.22]{ACST} and the fact that $K$ is totally real, $\Q \subset \Q(\sqrt{D}) \subset K \subset L$ and $E$ satisfy Assumption \ref{ass44} for a number field $E$ of odd degree such that $E \cap L = \Q$ and $\Q(\sqrt{2777 \cdot t}) \cap L = \Q$ for squarefree $t>1$ prime to $2777$. Let $F=\Q(\sqrt{2777 \cdot t})E$, where $t>1$ is a squarefree integer prime to $2777$.

\begin{proposition} \label{prop415}
Let $F^{\alpha}=F(\sqrt{2777})$, $F^-=FK$, $F^+=FL$ and $\rho$ and $c$ be chosen as above. Then, 
\begin{equation*}
CS_{c}([\rho]) = \frac{1}{2} \Longleftrightarrow  \left ( \frac{t}{2777} \right )=-1. 
\end{equation*}
\end{proposition}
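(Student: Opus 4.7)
The plan is to apply Theorem~\ref{thm46} with the specific choice $D=2777$, which is prime. Since $D$ is prime, $M=D=2777$ forces $\Q_1=\Q(\sqrt{2777})$ and $\Q_2=\Q(\sqrt{t})$, and the only potential prime divisor of $(D_{L/K},D)$ is $2777$ itself. Therefore the integer $s$ of Theorem~\ref{thm46} satisfies $s\in\{0,1\}$, with $s=1$ precisely when $2777$ divides $D_{L/K}$ and, in addition, $2777$ is inert in $\Q_1$ or in $\Q_2$.

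Because $2777$ ramifies in $\Q_1=\Q(\sqrt{2777})$, it is not inert there. Since $t$ is squarefree and coprime to $2777$, the prime $2777$ is inert in $\Q_2=\Q(\sqrt{t})$ exactly when $\left(\frac{t}{2777}\right)=-1$. Combining these observations, the proposition reduces to the single claim that $2777\mid D_{L/K}$: granting this, Theorem~\ref{thm46} immediately yields $CS_c([\rho])\equiv\frac{1}{2}\pmod{\Z}$ if and only if $\left(\frac{t}{2777}\right)=-1$, and otherwise $CS_c([\rho])=0$.

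To verify $2777\mid D_{L/K}$, I would extract $v_{2777}(d_K)$ from the LMFDB data for $g_2(x)$ and $v_{2777}(d_L)$ from the Galoisdb data for $g_1(x)$, and check the inequality $v_{2777}(d_L)>2\,v_{2777}(d_K)$; this is equivalent to $v_{2777}(D_{L/K})\geq 1$ via the relation $D_{L/K}=d_L/d_K^2$. As an independent consistency check, Assumption~\ref{ass44}(4) forces $K/\Q(\sqrt{2777})$ to be unramified above $2777$, so the tower formula for discriminants gives $v_{2777}(d_K)=[K:\Q(\sqrt{2777})]\cdot v_{2777}(d_{\Q(\sqrt{2777})})=12$, against which the tabulated value of $d_K$ can be compared.

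The main obstacle is precisely this database-level input: $d_L$ is the discriminant of a degree-$16$ field defined by a polynomial $g_1(x)$ of sizable coefficients, and $v_{2777}(d_L)$ is not feasibly computed by hand. Once the ramification of $L/K$ above $2777$ is confirmed from the tables, the remainder of the argument is entirely mechanical and mirrors the reasoning used in the proofs of Proposition~\ref{prop412} and Corollary~\ref{cor411}.
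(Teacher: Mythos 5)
Your reduction is the right one and is the route the paper intends: apply Theorem~\ref{thm46} with $D=2777$ prime and $M=2777$, so $\Q_1=\Q(\sqrt{2777})$, $\Q_2=\Q(\sqrt{t})$, the only candidate prime in $(D_{L/K},D)$ is $2777$, it is ramified (hence not inert) in $\Q_1$, and it is inert in $\Q_2$ exactly when $\left(\frac{t}{2777}\right)=-1$. So the proposition is, as you say, equivalent to the single assertion $2777\mid D_{L/K}$.

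The gap is that you never establish that assertion, and it is the entire content of the proposition: it is precisely what separates this $\Gamma_2$ example from Proposition~\ref{prop413}, where $\Gamma_1=\GL(2,\mathbb{F}_3)$ and the invariant vanishes identically because $(D_{L/K},D)=1$. Your fallback plan (read $v_{2777}(d_L)$ off the tables) does not go through as stated: \cite{GDB1} records data for the degree-$16$ field $\Q[x]/(g_1(x))$, whereas $d_L$ in $D_{L/K}=d_L/d_K^2$ is the discriminant of the degree-$48$ splitting field $L$, which is not tabulated; one would have to run a conductor--discriminant or local computation on top of the table data (this is how the paper treats the $\Gamma_1$ example), and you concede you cannot carry this out. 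The paper instead imports the needed ramification fact from the analysis in \cite{ACST} (Lemma 5.22 and the surrounding discussion of the extension `16T65'), where it follows by a short group-theoretic argument that avoids any large discriminant computation: since $d_L$ is odd, ramification at $2777$ in $L/\Q$ is tame, so the inertia subgroup of $\Gal(L/\Q)\cong\Gamma_2$ at a prime over $2777$ is cyclic; since $K/\Q(\sqrt{2777})$ is unramified at all finite primes while $2777$ ramifies in $\Q(\sqrt{2777})$, the inertia image in $\Gal(K/\Q)\cong S_4$ has order $2$ and is generated by an odd involution, i.e.\ a transposition; and in the extension $\Gamma_2$ every preimage of a transposition has order $4$, so the cyclic inertia group upstairs cannot have order $2$ and must have order $4$. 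Hence $L/K$ is ramified above $2777$ and $2777\mid D_{L/K}$, which is the missing step; without it (or an actual computation replacing it) your argument only proves the implication ``$CS_c([\rho])=\frac12 \Rightarrow \left(\frac{t}{2777}\right)=-1$'' conditionally and cannot rule out that $CS_c([\rho])$ vanishes for all $t$.
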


\begin{remark} \label{rmk416}
Let $F=\Q(\sqrt{2777 \cdot 7537})E$ and $c_1, c_2 \in H^3(S_4, \Z/2\Z)$ are given by $c$ of Proposition \ref{prop413} and \ref{prop415}, respectively. Then,
\begin{center}
$CS_{c_1}([\rho])=0$ and $\displaystyle \left ( \frac{7537}{2777} \right )=-1 \, \Rightarrow \,  CS_{c_2}([\rho])=\frac{1}{2}$, 
\end{center}
so $CS_c([\rho])$ can be different when both the number field $F$ and the gauge group $A$ are same. 
\end{remark}

\subsection{Biquadratic fields with $n=2$} \label{Sub44}

\noindent In this subsection, we provide an analogue of Theorem \ref{thm46} for the compositum of a biquadratic field and a number field of odd degree. Unfortunately, $CS_c([\rho])$ is always zero in this case. 

\begin{assumption} \label{ass417}
$K \subset L$ and $E$ are number fields, $D_1, D_2 \neq 1$ are squarefree integers such that \\
(1) $\Gal(L/\Q) \cong \Gamma$ and $d_L$ is odd. \\
(2) $\Gal(K/\Q) \cong A$ and $K$ is totally real when both $D_1$ and $D_2$ are positive. \\
(3) $[E:\Q]$ is odd and $E \cap L = \Q$. \\
(4) $(D_1, D_2)=1$ and $D_1D_2$ divides $d_K$. \\
(5) $\Q(\sqrt{D_1}, \sqrt{D_2}) \subset K$ and $K/\Q(\sqrt{D_1}, \sqrt{D_2})$ is unramified at all primes.
\end{assumption}

\noindent Let $F=\Q(\sqrt{D_1t_1}, \sqrt{D_2t_2})E$, where $t_1, t_2>1$ are squarefree integers such that $D_1, D_2, t_1, t_2$ are pairwise coprime. Denote $F_0=\Q(\sqrt{D_1t_1}, \sqrt{D_2t_2})$ and suppose that $F_0 \cap L = \Q$. Then we can prove $F \cap L = \Q$ exactly as the proof of Proposotion \ref{prop45}.

\begin{proposition} \label{prop418}
Let $K$, $L$, $D_1$, $D_2$ and $E$ satisfy Assumption \ref{ass417} and $F$ is given as above (with the assumption $F_0 \cap L = \Q$). Then $F^-=KF$ and $F^+=LF$ satisfy Assumption \ref{ass41}. 
\end{proposition}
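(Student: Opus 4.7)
The plan is to verify each clause of Assumption \ref{ass41} for the choices $F^-=KF$ and $F^+=LF$. Clause (1) is intrinsic to the data $(A,\Gamma,c)$, and clauses (3)--(4) merely fix $F^\alpha$ and $\rho$ in terms of the already constructed tower. The substantive content is clause (2): the two Galois-group isomorphisms and the two unramifiedness conditions.

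The Galois-group identifications follow from $F\cap L=\Q$, which the paragraph preceding the proposition deduces from $F_0\cap L=\Q$ by the very argument of Proposition \ref{prop45}. Since $K\subseteq L$, this also gives $F\cap K=\Q$, so $\Gal(KF/F)\cong\Gal(K/\Q)\cong A$ and $\Gal(LF/F)\cong\Gal(L/\Q)\cong\Gamma$. The unramifiedness of $F^+=LF$ at primes of $F$ above $2$ is then immediate: since $d_L$ is odd, $L/\Q$ is unramified at $2$, and compositum of an unramified local extension with anything is still unramified. The same argument, using that $d_K$ is also odd (since $d_K^{[L:K]}$ divides $d_L$ by the discriminant tower formula), shows $KF/F$ is unramified at primes of $F$ above $2$.

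For the real-place part of $F^-/F$, I would split on the signs of $D_1,D_2$. If either $D_i<0$, then $\sqrt{D_it_i}$ is imaginary (recall $t_i>0$), so $F\supseteq\Q(\sqrt{D_it_i})$ has no real embeddings and the statement is vacuous. If $D_1,D_2>0$, then Assumption \ref{ass417}(2) gives $K$ totally real, and any real embedding of $F$ extends through a real embedding of $K$ to a real embedding of $KF$; hence $KF/F$ is unramified at all real places.

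The heart of the argument, and the only step requiring actual ramification calculation, is unramifiedness of $F^-=KF$ over $F$ at finite odd primes. The plan is to use the tower $F\subseteq FK_0\subseteq FK$ with $K_0=\Q(\sqrt{D_1},\sqrt{D_2})$. The upper step $FK/FK_0$ is the base change of $K/K_0$, which is unramified everywhere by Assumption \ref{ass417}(5); base change preserves this. The lower step $FK_0/F$ is biquadratic, and the key observation is that $\sqrt{D_it_i}\in F$ yields the identification $F(\sqrt{D_i})=F(\sqrt{t_i})$ for $i=1,2$, so $FK_0$ admits multiple biquadratic descriptions. At an odd finite prime $\mathfrak{p}$ of $F$ over a rational prime $p$, the pairwise coprimality of $D_1,D_2,t_1,t_2$ forces $p$ to divide at most one of the four integers; thus for each $i$ at least one of $D_i$ or $t_i$ is a $p$-adic unit, and the corresponding square root generates an unramified quadratic extension of $F_\mathfrak{p}$. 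Choosing these representations for $i=1,2$ shows $FK_0/F$ is unramified at $\mathfrak{p}$. The main obstacle is simply to execute this case analysis cleanly, but the pairwise coprimality reduces the argument essentially to the single-quadratic case handled in Proposition \ref{prop45}.
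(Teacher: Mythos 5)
Your proof is correct. Note that the paper states Proposition \ref{prop418} without any proof at all: it relies on the remark preceding the statement (that $F\cap L=\Q$ follows ``exactly as the proof of Proposition \ref{prop45}'') and, implicitly, on the analogy with Proposition \ref{prop45}, whose own proof defers the finite-prime ramification analysis to \cite[Proposition 5.10]{ACST}. So your write-up supplies details the paper leaves to the reader. Your verification of the Galois-group isomorphisms, the primes above $2$ (via $d_K^{[L:K]}\mid d_L$, hence $d_K$ odd), and the real places (vacuous unless $D_1,D_2>0$, in which case Assumption \ref{ass417}(2) gives $K$ totally real) all match what the cited arguments would produce. The one genuinely substantive step, unramifiedness of $FK_0/F$ at odd finite primes, you handle by the identification $F(\sqrt{D_i})=F(\sqrt{t_i})$ (valid since $\sqrt{D_it_i}\in F$) together with pairwise coprimality of $D_1,D_2,t_1,t_2$, so that each quadratic layer is generated by a square root of a $p$-adic unit; this is a clean, self-contained substitute for the Abhyankar-lemma style computation that \cite{ACST} uses in the single-quadratic case, and it is exactly the right generalization to the biquadratic base. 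No gaps.
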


\noindent Now suppose that $F^{\alpha}=F(\sqrt{M}) \subset F^-$ for some $M \neq 1$ dividing $D_1D_2$. Denote $\Q_1 = \Q(\sqrt{M})$ and $\Q_2 = \Q(\sqrt{N})$ for $N=D_1D_2t_1t_2/M$.

\begin{theorem} \label{thm419}
Suppose we are in Assumption \ref{ass417} and choose $F \subset F^{\alpha} \subset F^- \subset F^+$ as above, $\rho$ and $c$ as Assumption \ref{ass41}. Then,
\begin{equation*}
CS_c([\rho]) =0.
\end{equation*}
\end{theorem}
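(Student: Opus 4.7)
The plan is to apply Theorem~\ref{thm43} via Proposition~\ref{prop418} to reduce the claim to a parity statement, and then to use the biquadratic structure of $F_0 := \Q(\sqrt{D_1 t_1}, \sqrt{D_2 t_2})$ over $\Q$ to show this parity is trivial. By Proposition~\ref{prop418}, $F \subset F^\alpha \subset F^- \subset F^+$ satisfies Assumption~\ref{ass41}, so Theorem~\ref{thm43} gives
$$
CS_c([\rho]) \equiv \frac{r}{2} \pmod{\Z},
$$
where $r$ is the number of primes $\mathfrak{p} \in S_f$ that are inert in $F^\alpha = F(\sqrt{M})$. It therefore suffices to show $r \equiv 0 \pmod 2$, and I would do this by writing $r = \sum_p r_p$ as a sum over the rational primes $p$ below primes of $S_f$ and showing that each $r_p$ is even.

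For each such $p$, I would analyze the primes of $F$ above $p$ via the decomposition $F \otimes_\Q \Q_p \cong (F_0 \otimes_\Q \Q_p) \otimes_{\Q_p} (E \otimes_\Q \Q_p)$, so that primes of $F$ over $p$ are indexed by pairs $(\mathfrak{p}_0, v)$ with $\mathfrak{p}_0 \mid p$ in $F_0$ and $v \mid p$ in $E$, with further splitting controlled by the local factors $(F_0)_{\mathfrak{p}_0} \otimes_{\Q_p} E_v$. A crucial simplification is that $F^\alpha/F \subset F^-/F$ is unramified everywhere by Assumption~\ref{ass41}(2), so every prime of $F$ is either split or inert (never ramified) in $F^\alpha/F$, and inertness at $\mathfrak{p}$ is detected by whether $M$ is a non-square in the residue field $k(\mathfrak{p})$.

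The core case analysis proceeds in terms of the Frobenius $\phi_p \in V_4 = \Gal(F_0/\Q)$ when $p \nmid 2 D_1 D_2 t_1 t_2$. If $\phi_p = e$, then $p$ splits completely in $F_0$, the four local factors $(F_0)_{\mathfrak{p}_0}$ all equal $\Q_p$, and the four families of primes of $F$ above them contribute identically to $r_p$, forcing $r_p \equiv 0 \pmod 4$. If $\phi_p \neq e$, then $(F_0)_{\mathfrak{p}_0} = \Q_{p^2}$, and a direct tensor computation shows that $f(\mathfrak{p}/p)$ is always even for $\mathfrak{p} \mid p$ in $F$, so no such $\mathfrak{p}$ is inert in the unramified quadratic extension $F^\alpha/F$, giving $r_p = 0$. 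For $p \mid D_1 t_1 D_2 t_2$ (necessarily odd, since $d_L$ odd implies $2 \notin S_f$), the inertia of $F_0/\Q$ at $p$ has order $2$, so $e(\mathfrak{p}_0/p) = 2$; combined with $[E:\Q]$ being odd and with the nontrivial involution in $V_4$ that pairs primes of $F_0$ over $p$ with those over the subfield in which $p$ is unramified, the same kind of tensor analysis again yields $r_p$ even.

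The main obstacle I expect is the ramified case, in particular when $p \mid M \cdot D_1 D_2$, where one must simultaneously track local ramification and the unit class of $M$ in the residue field. The key simplification is that $v_\mathfrak{p}(M) = e(\mathfrak{p}/p)\, v_p(M)$ is forced to be even by the unramifiedness of $F^\alpha/F$, so the inertness condition collapses to a residue-field square-ness question that the biquadratic symmetry distributes into even packets. Summing the even contributions $r_p$ over all $p$ yields $r \equiv 0 \pmod 2$, hence $CS_c([\rho]) = 0$.
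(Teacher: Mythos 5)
Your overall strategy coincides with the paper's: reduce to Theorem \ref{thm43} via Proposition \ref{prop418}, write $r=\sum_p r_p$, and kill each $r_p$ by analyzing the splitting of $p$ in the biquadratic field $F_0$. Your treatment of the unramified cases is correct (and the observation that $\phi_p\neq e$ forces $f(\mathfrak{p}/p)$ even, hence $M$ is a square in $k(\mathfrak{p})$ and $r_p=0$, is a clean variant of the paper's argument). The split case and the ramified case with two primes of $F_0$ above $p$ are also correctly disposed of by the conjugation symmetry. Note, however, that you never actually determine $S_f$; the paper first proves $S_f=\{\mathfrak{p}: \mathfrak{p}\mid D_{L/K},\ \mathfrak{p}\nmid t_1t_2\}$ via Abhyankar's lemma, and some such description is needed to know that membership in $S_f$ is constant across the primes of $F$ you are pairing up (and to justify your parenthetical that the relevant $p$ are odd).

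The genuine gap is the ramified case with a \emph{single} prime $\mathfrak{p}_0$ of $F_0$ above $p$, i.e.\ $p\mid D_1$ (say), $p\nmid t_1t_2$, and $\bigl(\tfrac{D_2t_2}{p}\bigr)=-1$, so that $e(\mathfrak{p}_0/p)=f(\mathfrak{p}_0/p)=2$. Here the ``nontrivial involution in $V_4$ that pairs primes of $F_0$'' pairs nothing: there is only one prime of $F_0$ above $p$, and the number of primes of $F$ above it can be odd (e.g.\ it is $1$ when $E=\Q$), so no parity cancellation is available and the assertion that the squareness question ``distributes into even packets'' has no content. What must be shown is that every such $\mathfrak{p}$ actually \emph{splits} in $F^{\alpha}$, including when $p\mid M$ (where your reduction to a residue-field computation is not straightforward, since the unit part of $M$ in $F_{\mathfrak{p}}$ need not lie in $\mathbb{F}_p$). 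This is true, and is the crux of the paper's proof: one picks an auxiliary biquadratic field $F_1\in\{\Q(\sqrt{D_1t_1},\sqrt{M}),\Q(\sqrt{D_1t_1},\sqrt{N}),\Q(\sqrt{D_2t_2},\sqrt{M}),\Q(\sqrt{D_2t_2},\sqrt{N})\}$ unramified at $p$ and derives a contradiction from the residue degree of $p$ in the $(\Z/2\Z)^3$-extension $F_0(\sqrt{M})/\Q$ if $\mathfrak{p}_0$ were inert; equivalently, $(F_0)_{\mathfrak{p}_0}$ is the unique $V_4$-extension of $\Q_p$ ($p$ odd), hence already contains $\sqrt{M}$, and then $[F_{\mathfrak{p}}:(F_0)_{\mathfrak{p}_0}]$ odd transfers the splitting to $F^{\alpha}/F$. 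Without an argument of this kind your case analysis does not close.
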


\begin{proof}
We can prove that 
$$ S_f = \left \{  \mathfrak{p} \in \Spec(\mathcal{O}_F) :  \mathfrak{p} \mid D_{L/K}, \, \mathfrak{p} \nmid t_1t_2 \right \} $$
by using Abhyankar's lemma as in the proof of \cite[Theorem 5.13]{ACST}. By Theorem \ref{thm43}, $\displaystyle CS_c([\rho]) \equiv \frac{r}{2} \,\, (mod \,\, \Z)$ where $r$ is the number of primes in $S_f$ which are inert in $F^{\alpha}$. 
Let $\mathfrak{p} \in S_f$ be a prime above $p$ and $\mathfrak{p}_0=\mathfrak{p} \cap F_0$. For primes of $F$ above $p$, they are all inert or all split in $F^{\alpha}$. Thus if $p$ splits into two or four primes in $F_0$, these do not change the parity of $r$, so we do not need to consider them. \\
Now suppose that $\mathfrak{p}_0$ is the only prime of $F_0$ above $p$. If $p$ is unramified in $F_0/\Q$, then its inertia degree in $F_0/\Q$ is $4$ so 
$$ \left ( \frac{D_1t_1}{p} \right )=\left ( \frac{D_2t_2}{p} \right )=\left ( \frac{D_1D_2t_1t_2}{p} \right )=-1, $$
a contradiction. Suppose that both the ramification index and the inertia degree of $p$ in $F_0/\Q$ are $2$. One can choose
$$ F_1 \in \left \{ \Q(\sqrt{D_1t_1}, \sqrt{M}), \, \Q(\sqrt{D_1t_1}, \sqrt{N}), \, \Q(\sqrt{D_2t_2}, \sqrt{M}), \, \Q(\sqrt{D_2t_2}, \sqrt{N}) \right \} $$
which is unramified at $p$. Since $[F:F_0]=[E:\Q]$ is odd, $F^{\alpha}/F$ is inert at $\mathfrak{p}$ if and only if $F_0(\sqrt{M})/F_0$ is inert at $\mathfrak{p}_0$. In this case, the inertia degree of $p$ in $F_0(\sqrt{M})/\Q$ is $4$ and $F_0(\sqrt{M})/F_1$ is ramified at $p$ so the inertia degree of $p$ in $F_1/\Q$ is also $4$, which is also a contradiction. 
\end{proof}

\subsection{Non-vanishing examples with general $n$ and non-abelian gauge group} \label{Sub45}

In this subsection, $F$ is always totally imaginary. In \cite{BCG}, the following is proved. 

\begin{theorem} \label{thm4a}
(\cite[Theorem 1.2]{BCG}) Let $n>1$ be an integer and $c$ be a fixed generator of $H^3(\Z/n\Z, \Z/n\Z)$. Then, there are infinitely many totally imaginary number fields $F$ with a cyclic unramified Kummer extension $K/F$ of degree $n$ such that for the natural map $\rho : \pi \twoheadrightarrow \Gal(K/F) \xrightarrow{\simeq} \Z/n\Z$,
$$CS_c([\rho]) \neq 0. $$
\end{theorem}

\noindent For $n \geq 3$ and a non-cyclic gauge group $A$, nothing was known about the non-triviality of $CS_c$. Based on the theorem above, we prove for a large family of finite groups, there are infinitely many totally imaginary number fields $F$ such that $CS_c([\rho]) \neq 0$ for some $\rho$ and $c$. 

\begin{lemma} \label{lem4b}
Let $\varphi : B \rightarrow A$ be a homomorphism of finite groups, $\rho \in \Hom_{cont}(\pi, B)$ and $c \in H^3(A, \Z/n\Z)$. For $\rho'= \varphi \circ \rho : \pi \rightarrow A$ and $c'=\varphi^*(c) \in H^3(B, \Z/n\Z)$, 
$$CS_{c}([\rho'])= CS_{c'}([\rho]). $$
\end{lemma}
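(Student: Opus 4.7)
The plan is to reduce this lemma to the contravariant functoriality of pullback in group cohomology. Throughout this subsection $F$ is totally imaginary, so there are no real places, $F_{\un}^{f} = F_{\un}$, and $\pi \cong \pi^{\un}$; hence $\rho$ may be regarded as an element of $\Hom_{cont}(\pi^{\un}, B)$ and $\rho' = \varphi \circ \rho \in \Hom_{cont}(\pi^{\un}, A)$, so that $[\rho] \in \mathcal{M}(B)$ and $[\rho'] \in \mathcal{M}(A)$ are well-defined and the expressions $CS_{c'}([\rho])$ and $CS_{c}([\rho'])$ make sense.

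First I would unwind the definitions given in subsection \ref{Sub22}:
\[
CS_{c}([\rho']) = \inv\bigl(j^{3}_{\un}((\rho')^{*}(c))\bigr), \qquad CS_{c'}([\rho]) = \inv\bigl(j^{3}_{\un}(\rho^{*}(c'))\bigr).
\]
Since the composite $\inv \circ j^{3}_{\un} : H^{3}(\pi^{\un}, \Z/n\Z) \to \mu_{n}(F)^{D}$ is intrinsic to $X$ and $n$ and depends on no gauge-group data, it suffices to identify the two classes
\[
(\rho')^{*}(c), \;\rho^{*}(c') \; \in \; H^{3}(\pi^{\un}, \Z/n\Z).
\]
This is immediate from contravariant functoriality of group cohomology applied to the composition $\pi^{\un} \xrightarrow{\rho} B \xrightarrow{\varphi} A$: we have $(\varphi \circ \rho)^{*} = \rho^{*} \circ \varphi^{*}$ on $H^{3}(-, \Z/n\Z)$, so
\[
(\rho')^{*}(c) = (\varphi \circ \rho)^{*}(c) = \rho^{*}(\varphi^{*}(c)) = \rho^{*}(c').
\]
Applying $\inv \circ j^{3}_{\un}$ to both sides yields $CS_{c}([\rho']) = CS_{c'}([\rho])$.

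There is no substantive obstacle here, as the lemma is essentially a tautology expressing the functoriality of the pullback construction used to define $CS$. The only sanity checks are the identifications $\pi = \pi^{\un}$ (which uses totally imaginary $F$) and the standard fact that pullback on the inhomogeneous cochain complex strictly commutes with composition, so the identity $(\varphi \circ \rho)^{*} = \rho^{*} \circ \varphi^{*}$ already holds at the cochain level and descends to cohomology.
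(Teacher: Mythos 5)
Your proposal is correct and matches the paper's proof, which is exactly the one-line computation $(\rho')^*(c) = (\varphi\circ\rho)^*(c) = \rho^*\varphi^*(c) = \rho^*(c')$ followed by applying $\inv\circ j^3_{\un}$. The extra remarks about $\pi\cong\pi^{\un}$ for totally imaginary $F$ and the gauge-group-independence of $\inv\circ j^3_{\un}$ are accurate sanity checks but add nothing beyond the paper's argument.
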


\begin{proof}
$(\rho')^*(c)= (\varphi \circ \rho)^*(c)=\rho^* \varphi^* (c) =\rho^*(c')$. 
\end{proof}

\noindent Let $A$ be a finite group and $\varphi : \Z/n\Z \rightarrow A$ be a group homomorphism. If there exists $c \in H^3(A, \Z/n\Z)$ such that $\varphi^*(c)$ is a generator of $H^3(\Z/n\Z, \Z/n\Z)$, then by Theorem \ref{thm4a} and Lemma \ref{lem4b}, 
$$
CS_c([\varphi \circ \rho]) = CS_{\varphi^*(c)}([\rho]) \neq 0
$$
for infinitely many number fields $F$ with some $\rho \in \Hom_{cont}(\pi, \Z/n\Z)$. 
For any finite group $G$, let
$$
\delta : H^1(G, \Z/n\Z) \rightarrow  H^2(G, \Z/n\Z)
$$ 
be the Bockstein homomorphism coming from the exact sequence $0 \rightarrow \Z/n\Z \overset{n}{\rightarrow} \Z/n^2\Z \rightarrow \Z/n\Z \rightarrow 0$. 
For the identity map $Id \in \Hom(\Z/n \Z, \Z/n \Z) = H^1(\Z/n \Z, \Z/n \Z)$, $Id \cup \delta(Id) \in H^3(\Z/n \Z, \Z/n \Z)$ is a generator of the group $H^3(\Z/n \Z, \Z/n \Z)$ (\cite[p. 5684]{ALN}). 
Suppose that there is $\psi \in \Hom(A, \Z/n \Z)$ such that $\psi \circ \varphi = Id$ and denote $c = \psi \cup \delta(\psi) \in H^3(A, \Z/n \Z)$. 
Since the following diagram
\[
\begin{tikzcd}
H^1(\Z/n \Z, \Z/n \Z) \arrow[r, "\delta"]
& H^2(\Z/n \Z, \Z/n \Z)  \\
H^1(A, \Z/n \Z) \arrow[r, "\delta"] \arrow[u, "\varphi^*"]
& H^2(A, \Z/n \Z) \arrow[u, "\varphi^*"]
\end{tikzcd}
\]
\noindent  commutes, 
$$\varphi^*(c)=\varphi^*(\psi) \cup \varphi^*(\delta(\psi)) = \varphi^*(\psi) \cup \delta (\varphi^*(\psi)) = Id \cup \delta(Id)$$
so $\varphi^*(c)$ is a generator of $H^3(\Z/n \Z, \Z/n \Z)$.

\begin{theorem} \label{thm4c}
Let $G$ be a finite group, $\alpha : \Z/n \Z \rightarrow \Aut(G)$ a group homomorphism and $A = G \rtimes_{\alpha} \Z/n \Z$ the semidirect product of $G$ by $\Z/n \Z$ with respect to $\alpha$. 
Then, there are infinitely many totally imaginary number fields $F$ such that
$$ CS_c([\rho]) \neq 0 $$
for some $\rho \in \Hom_{cont}(\pi, A)$ and $c \in H^3(A, \Z/n \Z)$.
\end{theorem}

\begin{proof}
By Theorem \ref{thm4a}, there are infinitely many totally imaginary number fields $F$ with with a cyclic unramified Kummer extension $K/F$ of degree $n$ such that for the natural map $\rho' : \pi \twoheadrightarrow \Gal(K/F) \xrightarrow{\simeq} \Z/n\Z$,
$$
CS_{Id \cup \delta(Id)}(\rho') \neq 0.
$$
By the discussion above, it is enough to show that there are group homomorphisms $\varphi : \Z/n\Z \rightarrow A$ and $\psi : A \rightarrow \Z/n\Z$ such that $\psi \circ \varphi = Id$. 
Since $A$ is a semidirect product of $G$ by $\Z/n \Z$, there are 
$\psi \in \Hom(A, \Z/n \Z)$ and $\varphi \in \Hom(\Z/n \Z, A)$
such that $\psi \circ \varphi = Id$. 
Then for $c = \psi \cup \delta(\psi) \in H^3(A, \Z/n \Z)$
and $\rho=\varphi \circ \rho' \in \Hom_{cont}(\pi, A)$, we have (Lemma \ref{lem4b})
\begin{equation*}
CS_c([\rho]) = CS_{\varphi^*(c)}([\rho'])=CS_{Id \cup \delta(Id)}([\rho'])  \neq 0.
\qedhere
\end{equation*}
\end{proof}

\begin{example} \label{ex4d}
For $d \geq 3$, the Heisenberg group 
$$ H_d(\Z/n\Z) := \left \{ \begin{bmatrix}
1 & \mathbf{a} & c\\ 
0 & I_{d-2} & \mathbf{b}\\ 
0 & 0 & 1
\end{bmatrix} \in M_d(\Z/n\Z) : \mathbf{a} \in M_{1, d-2}(\Z/n\Z), 
\mathbf{b} \in M_{d-2, 1}(\Z/n\Z), 
c \in \Z/n\Z \right \} $$
(with matrix multiplication) can be represented as a semidirect product 
$$
((\Z/n\Z)^{d-1} \rtimes (\Z/n\Z)^{d-3}) \rtimes (\Z/n\Z),
$$
so Theorem \ref{thm4c} can be applied to the case $A=H_d(\Z/n\Z)$. 
\end{example}

\begin{example} \label{ex4e}
Let $\mathbb{F}_{q}$ be the finite field of order $q=p^k \geq 3$ and $r$ be a positive integer. Then the general linear group $\GL(r, \mathbb{F}_q)$ can be represented as a semidirect product $\SL(r, \mathbb{F}_q) \rtimes (\mathbb{F}_q)^{\times}$ and $(\mathbb{F}_q)^{\times}$ is a cyclic group of order $q-1$, so Theorem \ref{thm4c} can be applied to $n=q-1$ and $A=\GL(r, \mathbb{F}_q)$. 
\end{example}

\begin{example} \label{ex4f}
Let $p$ be a prime. If $p$ is odd, there are two non-abelian groups of order $p^3$ : $H_3(\Z/p\Z)$ and $\Z/p^2 \Z \rtimes \Z/p\Z$. Theorem \ref{thm4c} can be applied to both groups. If $p=2$, then non-abelian groups of order $8$ are $D_4 \cong \Z/4\Z \rtimes \Z/2\Z$ and $\mathcal{Q}_8$. Since $\mathcal{Q}_8$ is not a semidirect product of its proper subgroups, Theorem \ref{thm4c} cannot be applied.
\end{example}

\begin{appendices}

\section{Completed \'etale cohomology of number fields} \label{appnA}

In this section, we review the method of Zink \cite{ZINK} and Conrad-Masullo \cite{CM} of generalizing the \'etale cohomology of $X=\Spec(\mathcal{O}_F)$ for totally imaginary number fields $F$, which was studied by Mazur \cite{MAZ}, to arbitrary number fields. And we give an alternative viewpoint of the arithmetic Chern-Simons theory by using this generalized \'etale cohomology. We closely follow the exposition of \cite{CM}.

\subsection{Completed small \'etale sites}

Let $\eta :\Spec(F) \rightarrow X$ be a generic point of $X$ and denote its image in $X$ also by $\eta$. For a scheme $Y$ and a group $G$, denote the category of abelian \'etale sheaves on $Y$ by $\mathbf{Ab}_Y$ and the category of $G$-modules by $\mathbf{Mod}_G$. For $\mathcal{F} \in \mathbf{Ab}_X$, $\eta^*(\mathcal{F}) \in \mathbf{Ab}_{\Spec(F)}$ and the functor $\mathbf{Ab}_{\Spec(F)} \rightarrow \mathbf{Mod}_{G_F}$ ($\mathcal{G} \mapsto \displaystyle\mathop{\lim_{\longrightarrow}}_{F \subseteq L \subseteq \overline{F}, \,  [L:F] < \infty} \mathcal{G}(\Spec (L))$) is an equivalence of categories (\cite[Proposition 5.7.8]{LF}) so $\eta^*(\mathcal{F})$ corresponds to a $G_F$-module. (Denote by $\mathcal{F}_{\eta}$.)

\noindent Let $X_{\infty} = \left \{ v_1, \cdots, v_r \right \}$ be the set of all real places of $F$ and $\overline{X} := X \cup X_{\infty}$. We endow $\overline{X}$ with the topology whose closed sets are finite subsets of $\overline{X} \setminus \left \{ \eta \right \}$. Denote an open set of $\overline{X}$ by $\widetilde{U}$ and define $\widetilde{U}_{\infty} := \widetilde{U} \cap X_{\infty}$ and $U := \widetilde{U} \cap X$. For an open subset $U \subset X$, define $\overline{U} := U \cup X_{\infty}$. 
For each real place $v$ of $F$, let $\overline{v}$ be a fixed extension of $v$ to $\overline{F}$ and denote the decomposition group and the inertia group of $\overline{v}$ by $D_v$ and $I_v$, respectively. \\
Since $v$ is a real place, $I_v=D_v=\Gal(\overline{F_v}/F_v) \cong \Z/2 \Z$. 
For $\mathcal{F} \in \mathbf{Ab}_U$, $I_v$ acts on $\mathcal{F}_{\eta} \in \mathbf{Mod}_{G_F}$ via a natural homomorphism $I_v \rightarrow G_F$. 
For $x \in X \setminus \left \{ \eta \right \}$, its decomposition group and inertia group are given by $D_x := \Gal(\overline{F_x}/F_x)$ and $I_x := \Gal(\overline{F_x}/F_x^{un})$, where $F_x^{un}$ is the maximal unramified extension of $F_x$ in $\overline{F_x}$.

\begin{definition}
An \textbf{abelian sheaf} $\widetilde{ \mathcal{F} }$ on a non-empty open subset $\widetilde{U} \subset \overline{X}$ is a triple 
$$ \widetilde{ \mathcal{F} } := (\big\{  \widetilde{ \mathcal{F} }_{\widetilde{v}} \big\} , \mathcal{F}, \big\{ \varphi_{\widetilde{v}} \big\}) $$
where $\widetilde{v}$ varies through $\widetilde{U}_{\infty}$, $\widetilde{ \mathcal{F} }_{\widetilde{v}}$ are abelian groups, $\mathcal{F}$ is an abelian \'etale sheaf on $U$ and $\varphi_{\widetilde{v}} : \widetilde{ \mathcal{F} }_{\widetilde{v}} \rightarrow \mathcal{F}_{\eta}^{I_{\widetilde{v}}}$ are group homomorphisms. We call $\widetilde{ \mathcal{F} }_{\widetilde{v}}$ the \textbf{stalk} of $\widetilde{ \mathcal{F} }$ at $\widetilde{v}$, and we also denote the stalk $\mathcal{F}_x$ at any $x \in U$ by $\widetilde{ \mathcal{F} }_x$. $\widetilde{ \mathcal{F} }$ is called \textbf{constructible} if $\mathcal{F}$ is constructible and $\widetilde{ \mathcal{F} }_{\widetilde{v}}$ is finite for each $\widetilde{v} \in \widetilde{U}_{\infty}$. 
\end{definition}

\begin{definition}
Let $\widetilde{ \mathcal{F} }=(\big\{ \widetilde{ \mathcal{F} }_{\widetilde{v}} \big\}, \mathcal{F}, \big\{ \varphi_{\widetilde{v}} \big\})$ and $\widetilde{ \mathcal{G} }=(\big\{ \widetilde{ \mathcal{G} }_{\widetilde{v}} \big\}, \mathcal{G}, \big\{ \psi_{\widetilde{v}} \big\})$ be abelian sheaves on $\widetilde{U}$. A \textbf{morphism} $\widetilde{f} : \widetilde{ \mathcal{F} } \rightarrow \widetilde{ \mathcal{G} }$ of abelian sheaves on $\widetilde{U}$ is a triple $\widetilde{f} := (( \widetilde{ f }_{\widetilde{v}} ), f)$ such that $f : \mathcal{F} \rightarrow \mathcal{G}$ is a morphism of sheaves and 
for each $\widetilde{v} \in \widetilde{U}_{\infty}$, 
$\widetilde{ f }_{\widetilde{v}} : \widetilde{ \mathcal{F} }_{\widetilde{v}} \rightarrow \widetilde{ \mathcal{G} }_{\widetilde{v}}$ is a group homomorphism and $f_{\eta} \circ \varphi_{\widetilde{v}} = \psi_{\widetilde{v}} \circ \widetilde{f}_{\widetilde{v}}$. The \textbf{kernel} and \textbf{cokernel} of $\widetilde{f}$ is defined by $\ker(\widetilde{f}) := (\big\{ \ker(\widetilde{ f }_{\widetilde{v}} )\big\}, \ker(f), \big\{ \varphi'_{\widetilde{v}}  \big\})$ and $\coker(\widetilde{f}) := (\big\{ \coker(\widetilde{ f }_{\widetilde{v}} )\big\}, \coker(f), \big\{ \psi'_{\widetilde{v}}  \big\})$ for canonical group homomorphisms $\varphi'_{\widetilde{v}}$ and $\psi'_{\widetilde{v}}$. 
\end{definition}

\noindent Denote the category of abelian sheaves on $\widetilde{U}$ by $\mathfrak{Ab}_{\widetilde{U}}$. To develop a cohomology theory on $\widetilde{U}$, we need to construct an \'etale site on which the category of abelian sheaves is $\mathfrak{Ab}_{\widetilde{U}}$. For any scheme $Y$, define $Y(\mathbb{R}) := \Hom(\Spec(\mathbb{R}), Y)$. 
A morphism of schemes $f : Y \rightarrow Y'$ induces a map $f^{\mathbb{R}} : Y(\mathbb{R}) \rightarrow Y'(\mathbb{R})$ and $X(\mathbb{R})$ can be identified with $X_{\infty}$. 
Let $\widetilde{\mathbf{Sch}}_X$ be the category whose objects are pairs $\widetilde{Y} = (Y, \widetilde{Y}_{\infty})$ for an \'etale $X$-scheme $Y$ and a subset $\widetilde{Y}_{\infty} \subset Y(\mathbb{R})$, and morphisms $\widetilde{g} : \widetilde{Y} \rightarrow \widetilde{Y'}$ are morphisms $g : Y \rightarrow Y'$ of $X$-schemes such that $g^{\mathbb{R}}(\widetilde{Y}_{\infty}) \subset \widetilde{Y'}_{\infty}$.

\noindent Now let $\mathbf{Et}_{\widetilde{U}}$ be the category whose objects are the morphisms $\widetilde{Y} \rightarrow \widetilde{U}$ in $\widetilde{\mathbf{Sch}}_X$ and morphisms are defined over $\widetilde{U}$ in the evident manner. 
A \textbf{covering} in $\mathbf{Et}_{\widetilde{U}}$ is a family of morphisms $\big\{ \widetilde{f_i} : \widetilde{Y_i} \rightarrow \widetilde{Y}  \big\}$ such that $\big\{ f_i : Y_i \rightarrow Y  \big\}$ is a covering in $\mathbf{Et}_{U}$ (so $\bigcup_{i \in I}f_i(Y_i)=Y$) and $\bigcup_{i \in I}f_i^{\mathbb{R}}(\widetilde{Y_i}_{\infty})=\widetilde{Y}_{\infty}$. 
These coverings define the \'etale topology on $\widetilde{U}$, so they define the small \'etale site $\widetilde{U}_{\acute{e}t}$. Denote the category of abelian sheaves on $\widetilde{U}_{\acute{e}t}$ by $\mathbf{Ab}_{\widetilde{U}}$. 

\begin{proposition} \label{propA3}
(\cite[Theorem 2.4.3]{CM}) The categories $\mathfrak{Ab}_{\widetilde{U}}$ and $\mathbf{Ab}_{\widetilde{U}}$ are equivalent. 
\end{proposition}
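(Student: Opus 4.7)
The plan is to construct mutually quasi-inverse functors $\Phi \colon \mathbf{Ab}_{\widetilde{U}} \to \mathfrak{Ab}_{\widetilde{U}}$ and $\Psi \colon \mathfrak{Ab}_{\widetilde{U}} \to \mathbf{Ab}_{\widetilde{U}}$. First I would define $\Phi$: given $\widetilde{\mathcal{F}} \in \mathbf{Ab}_{\widetilde{U}}$, its restriction along the inclusion of the full subcategory of objects of the form $(Y,\varnothing)$ is an \'etale sheaf $\mathcal{F}$ on $U$ (via the standard identification \cite[Proposition 5.7.8]{LF}), and its stalk at each $\widetilde{v} \in \widetilde{U}_{\infty}$ is the filtered colimit
$$\widetilde{\mathcal{F}}_{\widetilde{v}} := \mathop{\lim_{\longrightarrow}}_{(Y,\widetilde{Y}_{\infty},\widetilde{y})} \widetilde{\mathcal{F}}(Y,\widetilde{Y}_{\infty})$$
over pointed \'etale neighborhoods, i.e., triples in which $\widetilde{y} \in \widetilde{Y}_{\infty}$ is a distinguished lift of $\widetilde{v}$. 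The transition homomorphism $\varphi_{\widetilde{v}}$ is produced by composing with the forgetful map $(Y,\widetilde{Y}_{\infty}) \to (Y,\varnothing)$ and passing to the germ at the generic point of the component of $Y$ through $\widetilde{y}$; the image lands in $\mathcal{F}_{\eta}^{I_{\widetilde{v}}}$ because any geometric point $\Spec(\overline{F}) \to Y$ lifting $\widetilde{y}$ is unique up to the action of $I_{\widetilde{v}} = D_{\widetilde{v}} \cong \Z/2\Z$.

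Next I would define $\Psi$ on a triple $(\{\widetilde{\mathcal{F}}_{\widetilde{v}}\},\mathcal{F},\{\varphi_{\widetilde{v}}\})$ by the fiber-product formula
$$\Psi(\widetilde{\mathcal{F}})(Y,\widetilde{Y}_{\infty}) := \mathcal{F}(Y) \times_{\prod_{\widetilde{y} \in \widetilde{Y}_{\infty}} \mathcal{F}_{\eta}^{I_{\widetilde{y}}}} \prod_{\widetilde{y} \in \widetilde{Y}_{\infty}} \widetilde{\mathcal{F}}_{\widetilde{v}(\widetilde{y})}$$
where $\widetilde{v}(\widetilde{y})$ denotes the real place of $F$ below $\widetilde{y}$, the left map is germ-taking at the generic point (landing in $I_{\widetilde{y}}$-invariants because a real point extends to a conjugate pair of geometric points), and the right map is given by $\varphi_{\widetilde{v}(\widetilde{y})}$. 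To check that this is a sheaf I would verify descent for a covering $\{\widetilde{f}_i \colon (Y_i,\widetilde{Y}_{i,\infty}) \to (Y,\widetilde{Y}_{\infty})\}$: descent of the \'etale factor $\mathcal{F}(Y)$ is standard, and for the archimedean factor the covering hypothesis $\bigcup_i f_i^{\mathbb{R}}(\widetilde{Y}_{i,\infty}) = \widetilde{Y}_{\infty}$ reduces the matching-family equalizer to a set-theoretic statement about preimages, which collapses because all real preimages of a fixed $\widetilde{y}$ sit above the same $\widetilde{v}(\widetilde{y})$ and contribute the same factor $\widetilde{\mathcal{F}}_{\widetilde{v}(\widetilde{y})}$.

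It then remains to verify the adjunction units are isomorphisms. For $\Phi\Psi \cong \mathrm{id}$, restricting to $(Y,\varnothing)$ recovers $\mathcal{F}$ tautologically, and the stalk computation reduces to
$$\Phi\Psi(\widetilde{\mathcal{F}})_{\widetilde{v}} = \mathop{\lim_{\longrightarrow}}_{(Y,\widetilde{Y}_{\infty},\widetilde{y})} \bigl(\mathcal{F}(Y) \times_{\mathcal{F}_{\eta}^{I_{\widetilde{v}}}} \widetilde{\mathcal{F}}_{\widetilde{v}}\bigr) \cong \widetilde{\mathcal{F}}_{\widetilde{v}},$$
which holds because cofinally we may restrict to pointed neighborhoods in which $\widetilde{y}$ is the unique real preimage of $\widetilde{v}$, and such neighborhoods realize the henselization at $\widetilde{v}$, making $\mathop{\lim_{\longrightarrow}} \mathcal{F}(Y) = \mathcal{F}_{\eta}^{I_{\widetilde{v}}}$ so that the fiber product collapses onto $\widetilde{\mathcal{F}}_{\widetilde{v}}$. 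Conversely, $\Psi\Phi \cong \mathrm{id}$ follows by applying the sheaf axiom for $\widetilde{\mathcal{G}} \in \mathbf{Ab}_{\widetilde{U}}$ to the covering of $(Y,\widetilde{Y}_{\infty})$ by $(Y,\varnothing)$ together with the pointed pairs $(Y,\{\widetilde{y}\})$ for $\widetilde{y} \in \widetilde{Y}_{\infty}$, which forces $\widetilde{\mathcal{G}}(Y,\widetilde{Y}_{\infty})$ to equal the fiber-product formula defining $\Psi$. The main obstacle is the archimedean descent step in paragraph two, because it is the one genuinely new ingredient beyond standard small-\'etale descent: one must argue carefully that gluing along discrete sets of real points is automatically compatible with the $I_{\widetilde{v}}$-invariance constraint imposed by the maps $\varphi_{\widetilde{v}}$, and this is the precise point where the real-place data interacts nontrivially with the Galois action on the generic stalk.
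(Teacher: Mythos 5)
The paper offers no proof of this proposition — it simply cites \cite[Theorem 2.4.3]{CM} — and your argument (restrict to the subsite of objects $(Y,\varnothing)$ to recover the \'etale sheaf on $U$, take stalks at real points via pointed archimedean neighborhoods whose inverse limit realizes $\Spec(F_v^{alg})$ so that $\varinjlim \mathcal{F}(Y)=\mathcal{F}_{\eta}^{I_{\widetilde{v}}}$, and use the fiber-product formula for the quasi-inverse) is exactly the construction of that reference and is consistent with the section formula the paper records immediately after the proposition. Two small points to tighten: the morphism in the site goes $(Y,\varnothing)\to(Y,\widetilde{Y}_{\infty})$ (not the other way), which is what induces the restriction map you want on sections; and in the final step $\Psi\Phi\cong\mathrm{id}$ the covering argument only reduces you to identifying $\widetilde{\mathcal{G}}(Y,\{\widetilde{y}\})$ with $\mathcal{G}(Y)\times_{\mathcal{G}_{\eta}^{I_{\widetilde{y}}}}\widetilde{\mathcal{G}}_{\widetilde{v}(\widetilde{y})}$, which still requires the same colimit comparison you used for $\Phi\Psi$ (two sections with equal germ agree after shrinking the pointed neighborhood, and any stalk element spreads out), not the sheaf axiom alone.
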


\noindent For an abelian group $A$, let $A_{\widetilde{U}}$ be the constant sheaf on $\widetilde{U}_{\acute{e}t}$ defined by $A$. 
From the proof of the above proposition in \cite{CM}, $A_{\widetilde{U}}$ corresponds to $(\big\{ A \big\}_{\widetilde{v}}, A_U, \big\{ id_A \big\}_{\widetilde{v}}) \in \mathfrak{Ab}_{\widetilde{U}}$. 
Denote this element also by $A_{\widetilde{U}}$. 
We may omit the subscript $\widetilde{U}$ of $A_{\widetilde{U}}$ when the context is clear. 
For $\widetilde{ \mathcal{F} } \in \mathbf{Ab}_{\widetilde{U}}$, 
$$\widetilde{ \mathcal{F} }(\widetilde{U}) \cong \Hom_{\mathbf{Ab}}(\Z, \widetilde{ \mathcal{F} }(\widetilde{U})) \cong \Hom_{\mathbf{Ab}_{\widetilde{U}}}(\Z_{\widetilde{U}}, \widetilde{ \mathcal{F} }) \cong \mathcal{F}(U) \times_{\mathcal{F}_{\eta}} \widetilde{ \mathcal{F} }_{\widetilde{v_1}} \times_{\mathcal{F}_{\eta}} \cdots \times_{\mathcal{F}_{\eta}} \widetilde{ \mathcal{F} }_{\widetilde{v_s}}$$ 
where $\widetilde{U}_{\infty}=\left \{ \widetilde{v_1}, \cdots, \widetilde{v_s} \right \}$, and the fiber product is taken with respect to the natural map $\mathcal{F}(U) \rightarrow \mathcal{F}_{\eta}$ and the specialization maps $\widetilde{\mathcal{F}}_{\widetilde{v_k}} \rightarrow \mathcal{F}_{\eta}^{I_{\widetilde{v_k}}} \xhookrightarrow{} \mathcal{F}_{\eta}$. 
The right side gives the definition of the global section functor on $\mathfrak{Ab}_{\widetilde{U}}$, namely, $\Gamma(\widetilde{U}, \cdot) : \mathfrak{Ab}_{\widetilde{U}} \rightarrow \mathbf{Ab}$ ($\widetilde{ \mathcal{F} } \mapsto \mathcal{F}(U) \times_{\mathcal{F}_{\eta}} \widetilde{ \mathcal{F} }_{\widetilde{v_1}} \times_{\mathcal{F}_{\eta}} \cdots \times_{\mathcal{F}_{\eta}} \widetilde{ \mathcal{F} }_{\widetilde{v_s}})$. 
From now on we identify two categories $\mathfrak{Ab}_{\widetilde{U}}$ and $\mathbf{Ab}_{\widetilde{U}}$, and use them without distinction.

\subsection{Cohomology theory}

Since $\Gamma(\widetilde{U}, \cdot)$ is left-exact, its right derived functor $H^p(\widetilde{U}, \widetilde{\mathcal{F}}) := R^p \Gamma(\widetilde{U}, \widetilde{\mathcal{F}})$ is well-defined for each $p \geq 0$. 
Let $\widetilde{V} \subset \widetilde{U}$ be non-empty open subsets of $\overline{X}$ and $\widetilde{S} = \widetilde{U} \setminus \widetilde{V}$. 
Then $\widetilde{S}=S_f \cup \widetilde{S}_{\infty}$ for $S_f=U \setminus V$ and $\widetilde{S}_{\infty}=\widetilde{U}_{\infty} \setminus \widetilde{V}_{\infty}$. 
Define $\Gamma_{\widetilde{S}}(\widetilde{U}, \widetilde{\mathcal{F}}) := \ker(\widetilde{\mathcal{F}}(\widetilde{U}) \rightarrow \ker(\widetilde{\mathcal{F}}(\widetilde{V}))$ and $H^p_{\widetilde{S}}(\widetilde{U}, \widetilde{\mathcal{F}}) := R^p \Gamma_{\widetilde{S}}(\widetilde{U}, \widetilde{\mathcal{F}})$. 
Denote $\Gamma_{\widetilde{S}}(\widetilde{U}, \widetilde{\mathcal{F}})$ by $\Gamma_{x}(\widetilde{U}, \widetilde{\mathcal{F}})$ and $H^p_{\widetilde{S}}(\widetilde{U}, \widetilde{\mathcal{F}})$ by $H^p_{x}(\widetilde{U}, \widetilde{\mathcal{F}})$ when $\widetilde{S} = \left \{ x \right \}$. 
Then for $\widetilde{\mathcal{F}} \in \mathbf{Ab}_{\widetilde{U}}$, the following local cohomology sequence is exact (\cite[p. 21]{CM}).
$$ \cdots \rightarrow H^{p-1}(\widetilde{U}, \widetilde{\mathcal{F}}) \rightarrow  H^{p-1}(\widetilde{V}, \widetilde{\mathcal{F}} \mid_{\widetilde{V}}) \rightarrow \bigoplus_{x \in \widetilde{S}} H_x^p(\widetilde{U}, \widetilde{\mathcal{F}}) \rightarrow  H^p(\widetilde{U}, \widetilde{\mathcal{F}}) \rightarrow  H^p(\widetilde{V}, \widetilde{\mathcal{F}} \mid_{\widetilde{V}}) \rightarrow  \cdots $$

\begin{lemma} \label{lemA4}
(\cite[Lemma 3.2.3]{CM}) For $\widetilde{ \mathcal{F} } = (\big\{  \widetilde{ \mathcal{F} }_{\widetilde{v}} \big\} , \mathcal{F}, \big\{ \varphi_{\widetilde{v}} \big\}) \in \mathbf{Ab}_{\widetilde{U}}$ and $\widetilde{v} \in \widetilde{U}_{\infty}$, 
$$ H^p_{\widetilde{v}}(\widetilde{U}, \widetilde{ \mathcal{F} }) = 
\left\{\begin{matrix}
\ker (\varphi_{\widetilde{v}})  & (p=0)   \\ 
\coker (\varphi_{\widetilde{v}}) & (p=1) \\ 
H^{p-1}(I_{\widetilde{v}}, \mathcal{F}_{\eta}) & (p \geq 2)
\end{matrix}\right. .$$
\end{lemma}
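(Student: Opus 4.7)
The plan is to resolve $\widetilde{\mathcal{F}}$ injectively in $\mathbf{Ab}_{\widetilde{U}}$ and analyze $\Gamma_{\widetilde{v}}$ together with the associated complex of $\widetilde{v}$-stalks. First I would dispose of $p=0$ by a direct calculation: using the fiber-product description of $\widetilde{\mathcal{F}}(\widetilde{U})$ recalled just after Proposition \ref{propA3}, an element of $\ker\bigl(\widetilde{\mathcal{F}}(\widetilde{U}) \to \widetilde{\mathcal{F}}(\widetilde{V})\bigr)$ must have vanishing $\mathcal{F}(U)$-component and vanishing $\widetilde{w}$-components for $\widetilde{w}\neq \widetilde{v}$, and the fiber-product compatibility at $\widetilde{v}$ then forces $\varphi_{\widetilde{v}}(t_{\widetilde{v}})=0$. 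Hence $H^0_{\widetilde{v}}(\widetilde{U},\widetilde{\mathcal{F}})=\ker(\varphi_{\widetilde{v}})$.

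Next, fix an injective resolution $0 \to \widetilde{\mathcal{F}} \to \widetilde{\mathcal{I}}^\bullet$ in $\mathbf{Ab}_{\widetilde{U}}$. Applying the $p=0$ identification termwise gives $\Gamma_{\widetilde{v}}(\widetilde{U},\widetilde{\mathcal{I}}^n) = \ker(\varphi_{\widetilde{v}}^n)$, so $H^p_{\widetilde{v}}(\widetilde{U},\widetilde{\mathcal{F}})$ is the cohomology of the complex $\ker(\varphi_{\widetilde{v}}^\bullet)$. The engine of the argument is then the short exact sequence of complexes of abelian groups
$$ 0 \longrightarrow \ker(\varphi_{\widetilde{v}}^\bullet) \longrightarrow \widetilde{\mathcal{I}}^\bullet_{\widetilde{v}} \xrightarrow{\varphi_{\widetilde{v}}^\bullet} (\mathcal{I}^\bullet_\eta)^{I_{\widetilde{v}}} \longrightarrow 0, $$
resting on three structural facts about injective objects in $\mathbf{Ab}_{\widetilde{U}}$: (i) each $\varphi_{\widetilde{v}}^n$ surjects onto $(\mathcal{I}^n_\eta)^{I_{\widetilde{v}}}$, which is what makes the sequence exact on the right; (ii) the stalk complex $\widetilde{\mathcal{I}}^\bullet_{\widetilde{v}}$ is a resolution of $\widetilde{\mathcal{F}}_{\widetilde{v}}$, which follows from the exactness of the stalk functor at $\widetilde{v}$; and (iii) $\mathcal{I}^\bullet_\eta$ is an injective resolution of $\mathcal{F}_\eta$ as a $G_F$-module (because $\eta^*$ preserves injectives, its left adjoint $\eta_!$ being exact), and therefore also as an $I_{\widetilde{v}}$-module via exactness of $\mathrm{Ind}_{I_{\widetilde{v}}}^{G_F}$ and Frobenius reciprocity; this yields $H^q\bigl((\mathcal{I}^\bullet_\eta)^{I_{\widetilde{v}}}\bigr) = H^q(I_{\widetilde{v}},\mathcal{F}_\eta)$.

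Threading (ii) and (iii) into the long exact sequence of the displayed short exact sequence, the only nonvanishing term of $H^p(\widetilde{\mathcal{I}}^\bullet_{\widetilde{v}})$ sits in degree zero, so the sequence degenerates to $0 \to H^0_{\widetilde{v}} \to \widetilde{\mathcal{F}}_{\widetilde{v}} \xrightarrow{\varphi_{\widetilde{v}}} \mathcal{F}_\eta^{I_{\widetilde{v}}} \to H^1_{\widetilde{v}} \to 0$ in low degrees, recovering $\ker\varphi_{\widetilde{v}}$ and $\coker\varphi_{\widetilde{v}}$ as claimed, and to isomorphisms $H^{p-1}(I_{\widetilde{v}},\mathcal{F}_\eta) \xrightarrow{\sim} H^p_{\widetilde{v}}(\widetilde{U},\widetilde{\mathcal{F}})$ for $p\geq 2$, matching the stated formula.

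The main obstacle is (i), namely verifying that $\varphi_{\widetilde{v}}$ is surjective for every injective $\widetilde{\mathcal{I}}$. I expect to establish this by constructing enough injectives of a convenient shape, for example by embedding any $\widetilde{\mathcal{I}}^n$ into a sheaf of the form $(\{(\mathcal{J}_\eta)^{I_{\widetilde{w}}}\},\mathcal{J},\{\mathrm{id}\})$ for a suitably chosen injective $\mathcal{J}$ on $U$ and splitting the inclusion by injectivity, thereby forcing the cokernel of $\varphi_{\widetilde{v}}$ to vanish on injective objects. With this structural fact in hand, the remainder of the argument is formal.
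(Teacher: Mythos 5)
The paper itself offers no proof of this lemma; it is quoted verbatim from Conrad--Masullo, so there is nothing internal to compare against. Judged on its own, your strategy is the standard one and is structurally sound: the $p=0$ computation from the fiber-product description of sections is correct (removing $\widetilde{v}$ does not change the finite part $U$, so the $\mathcal{F}(U)$-component and the other archimedean components must vanish, leaving exactly $\ker\varphi_{\widetilde{v}}$), and the short exact sequence of complexes together with facts (ii) and (iii) does produce the stated answer. Two points need attention. First, in (iii) you tacitly use that the underlying \'etale sheaf $\mathcal{I}^n$ of an injective object of $\mathbf{Ab}_{\widetilde{U}}$ is injective in $\mathbf{Ab}_U$; this is true because the forgetful functor has an exact left adjoint (extension by zero at the archimedean points, $\mathcal{G}\mapsto(\{0\},\mathcal{G},\{0\})$), but it should be said.

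Second, and more seriously, the construction you propose for the key step (i) does not work as stated: a morphism from $\widetilde{\mathcal{I}}$ to a sheaf of the form $(\{(\mathcal{J}_\eta)^{I_{\widetilde{w}}}\},\mathcal{J},\{\mathrm{id}\})$ is determined by its $U$-part, and its archimedean components are forced to be $g_\eta\circ\varphi_{\widetilde{w}}$; such a map is a monomorphism only if every $\varphi_{\widetilde{w}}$ is injective. This fails for genuine injective objects: the skyscraper $(\{\Q/\Z\}_{\widetilde{w}},0,\{0\})$ is injective in $\mathbf{Ab}_{\widetilde{U}}$ (morphisms into it compute $\Hom(\widetilde{\mathcal{G}}_{\widetilde{w}},\Q/\Z)$, an exact functor) and its structure map has nonzero kernel. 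The fix is to enlarge the target: embed each stalk $\widetilde{\mathcal{I}}^n_{\widetilde{w}}$ into a divisible group $A_{\widetilde{w}}$ and map $\widetilde{\mathcal{I}}^n$ into the direct sum of the injective skyscrapers $(\{A_{\widetilde{w}}\},0,\{0\})$ with $(\{(\mathcal{J}_\eta)^{I_{\widetilde{w}}}\},\mathcal{J},\{\mathrm{id}\})$, the latter being injective because it is the value on $\mathcal{J}$ of the right adjoint of the (exact) forgetful functor. This sum is injective, the map $t\mapsto(\iota(t),\varphi_{\widetilde{w}}(t))$ on stalks is now injective, the inclusion splits, and additivity of $\coker$ then forces $\coker\varphi_{\widetilde{v}}=0$ on $\widetilde{\mathcal{I}}^n$ exactly as you argue. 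With this amendment your proof is complete.
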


\noindent Let $U \subset X$ be a non-empty open subscheme. For each $v \in X_{\infty}=\overline{U}_{\infty}$, denote the algebraic closure of $F$ in its completion $F_v \cong \mathbb{R}$ by $F_v^{alg}$. Its positive part, which corresponds to $\mathbb{R}_{>0}$ by the isomorphism $F_v \simeq \mathbb{R}$ is denoted by $F_v^{alg, +}$. Denote the inclusion of groups $F_v^{alg, +}  \hookrightarrow  F_v^{alg, \times}$ by $\iota_v$. From the formula
$$\displaystyle \mathbf{G}_{m, U, \eta}= \mathop{\lim_{\longrightarrow}}_{F \subseteq L \subseteq \overline{F}, \,  [L:F] < \infty} \mathbf{G}_{m, U}(\Spec(L)) = \mathop{\lim_{\longrightarrow}}_{F \subseteq L \subseteq \overline{F}, \,  [L:F] < \infty} L^{\times} = \overline{F}^{\times},$$
$\mathbf{G}_{m, U, \eta}^{I_v} = F_v^{alg, \times}$ and the following definition is well-defined. For the motivation of the definition below, see Example \ref{exA8}.

\begin{definition}
The sheaf $\mathbf{G}_{m, \overline{U}} \in \mathfrak{Ab}_{\overline{U}}=\mathbf{Ab}_{\overline{U}}$ is defined by the triple
$(\big\{ F_v^{alg, +} \big\}_{v \in X_{\infty}}, \mathbf{G}_{m, U}, \big\{ \iota_v \big\}_{v \in X_{\infty}})$. 
\end{definition}

\noindent Define the sheaf $\mu_{n, \overline{U}} \in \mathbf{Ab}_{\overline{U}}$ by $\mu_{n, \overline{U}} :=\ker(\mathbf{G}_{m, \overline{U}} \overset{n}{\rightarrow} \mathbf{G}_{m, \overline{U}})$. Since there is an injection $F_v^{alg, +} \rightarrow \mathbb{R}_{>0}$, $\ker(F_v^{alg, +} \overset{n}{\rightarrow} F_v^{alg, +})=0$ and $\mu_{n, \overline{U}} = ( \big\{ 0 \big\}_{v \in X_{\infty}}, \mu_{n, U}, \big\{ 0 \rightarrow \mu_n(F_v^{alg, +}) \big\}_{v \in X_{\infty}})$. We may omit the subscript $\overline{U}$ of $\mathbf{G}_{m, \overline{U}}$ and $\mu_{n, \overline{U}}$ when the context is clear. 
Even though the cohomology of $\mathbf{G}_{m, \overline{U}}$ for an arbitrary non-empty open subset $U$ of $X$ is known, we only state the result for the case $U=X$ because this is the only case that we use later. Let $\mathcal{O}_F^{\times, +}$ be the group of totally positive units of $F$, and $Cl^+(F)$ be the narrow class group of $F$.

\begin{proposition} \label{propA6}
(\cite[Proposition 2.5.9]{ZINK}) $$H^0(\overline{X}, \mathbf{G}_{m, \overline{X}})=\mathcal{O}_F^{\times, +}, \, H^1(\overline{X}, \mathbf{G}_{m, \overline{X}})=Cl^+(F), \, H^2(\overline{X}, \mathbf{G}_{m, \overline{X}})=0, H^3(\overline{X}, \mathbf{G}_{m, \overline{X}})=\Q/\Z.$$
\end{proposition}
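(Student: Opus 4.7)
My plan is to compute $H^0$ directly from the global-sections description given after Proposition A.3, and to handle the higher cohomology via the local cohomology long exact sequence attached to the open-closed decomposition $\overline{X} = X \sqcup X_\infty$, comparing the result with the compact-support cohomology of Proposition \ref{prop21} and Remark \ref{rmk22}.

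For $p = 0$, the global-sections formula expresses $H^0(\overline{X}, \mathbf{G}_{m, \overline{X}})$ as the iterated fiber product of $\mathcal{O}_F^\times$ with each $F_v^{alg,+}$ ($v \in X_\infty$) over the common generic stalk $\overline{F}^\times$. This cuts out the subgroup of $\mathcal{O}_F^\times$ whose image in $\overline{F}^\times$ lies in $F_v^{alg,+}$ for every real $v$, which is exactly the group $\mathcal{O}_F^{\times, +}$ of totally positive units.

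For $p \geq 1$ I apply the local cohomology long exact sequence with $\widetilde{V} = X$ and $\widetilde{S} = X_\infty$:
$$\cdots \to H^{p-1}(\overline{X}, \mathbf{G}_m) \to H^{p-1}(X, \mathbf{G}_m) \to \bigoplus_{v \in X_\infty} H^p_v(\overline{X}, \mathbf{G}_m) \to H^p(\overline{X}, \mathbf{G}_m) \to H^p(X, \mathbf{G}_m) \to \cdots.$$
The archimedean local groups are computed from Lemma \ref{lemA4}: since $\iota_v$ is injective with cokernel of order two, one has $H^0_v = 0$ and $H^1_v \cong \Z/2\Z$, while for $p \geq 2$ we get $H^p_v = H^{p-1}(I_v, \overline{F}^\times)$. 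As $I_v \cong \Z/2\Z$ and $\overline{F}$ is the algebraic closure of the real closed field $F_v^{alg}$, Hilbert 90 and the standard norm-quotient computation, combined with the $2$-periodicity of Tate cohomology for cyclic groups, yield $H^p_v(\overline{X}, \mathbf{G}_m) = \Z/2\Z$ for $p$ odd and $0$ for $p$ even.

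To finish, I would identify the localization sequence above with the defining long exact sequence of $H^p_c(X, \mathbf{G}_m)$ from Subsection \ref{Sub21}: for each real $v$, the local term $H^p_v(\overline{X}, \mathbf{G}_m)$ should match the archimedean contribution appearing in the mapping-fiber construction of $C_c$, and the connecting maps coincide because they are induced by the same natural restrictions at the archimedean primes. The five lemma would then yield $H^p(\overline{X}, \mathbf{G}_{m,\overline{X}}) \cong H^p_c(X, \mathbf{G}_{m,X})$ for all $p \geq 1$, and Proposition \ref{prop21} together with Remark \ref{rmk22} would supply the three remaining values $Cl^+(F)$, $0$, and $\Q/\Z$. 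The main obstacle I anticipate is this naturality step — rigorously matching the boundary maps from the open-closed decomposition of $\overline{X}$ with those of the mapping-fiber construction used to define $C_c$. A more hands-on alternative, avoiding the comparison, would be to substitute the explicit values of $H^p(X, \mathbf{G}_m)$ (which themselves follow from Proposition \ref{prop21} via the compact-support sequence) directly into the localization sequence and analyze the resulting extensions by hand; in particular, in degree three the short exact sequence $0 \to \Z/2\Z \to H^3(\overline{X}, \mathbf{G}_m) \to H^3(X, \mathbf{G}_m) \to 0$ must be the canonical non-split divisible extension with total group $\Q/\Z$.
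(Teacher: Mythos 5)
The paper offers no proof of this proposition at all—it is quoted verbatim from Zink \cite[Proposition 2.5.9]{ZINK}—so there is nothing internal to compare your route against; what matters is whether your sketch actually closes. Your computation of $H^0$ from the fiber-product description of global sections is correct and complete, and your evaluation of the local terms $H^p_{v}(\overline{X},\mathbf{G}_{m})$ via Lemma \ref{lemA4} (namely $0$ for $p$ even, $\Z/2\Z$ for $p$ odd, using Hilbert 90 and the norm computation for the real closed field $F_v^{alg}$) is also right. But for $p\geq 1$ both of your proposed finishes have a gap. The five-lemma route requires exactly the compatibility you flag as "the main obstacle": an identification of the localization sequence for $X\subset\overline{X}$ with Milne's compact-support sequence, including the boundary maps. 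This is not a formality — note also that the mapping-fiber complex as written in Subsection \ref{Sub21} uses \emph{ordinary} cochains at archimedean places, whereas Remark \ref{rmk22} (Milne's Remark 2.2.8(a)) is a statement about the version with \emph{Tate} cohomology at real places; with ordinary cochains the archimedean term feeding into $H^1_c$ is $H^0(\pi_v,\mathbb{C}^\times)=\mathbb{R}^\times$, not $\Z/2\Z$, and the termwise matching you need already fails at $p=1$. So the comparison must be set up with Milne's genuine definition, and then actually proved.

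The "hands-on alternative" does not repair this, because the localization sequence determines $H^p(\overline{X},\mathbf{G}_m)$ only up to an extension problem that the end groups do not resolve. In degree $3$ one gets $0\to\Z/2\Z\to H^3(\overline{X},\mathbf{G}_m)\to\Q/\Z\to 0$ (granting $H^3(X,\mathbf{G}_m)\cong\Q/\Z$, which itself needs the identification of the map $H^2(X,\mathbf{G}_m)\to\bigoplus_v\operatorname{Br}(F_v)$ with evaluation of Brauer classes at real places); since $\Ext^1(\Q/\Z,\Z/2\Z)\cong\Z/2\Z$, both $\Q/\Z$ and $\Q/\Z\oplus\Z/2\Z$ occur as abstract extensions, and your assertion that it "must be the canonical non-split divisible extension" is precisely the point at issue — it does not follow from the sequence. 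The same problem occurs in degree $1$: knowing that $H^1(\overline{X},\mathbf{G}_m)$ is an extension of $Cl(F)$ by $(\Z/2\Z)^{r_1}/\operatorname{sgn}(\mathcal{O}_F^\times)$ does not identify it with $Cl^+(F)$ as a group, let alone canonically. The missing input is global class field theory: Zink's argument runs the localization sequence over the generic point and all closed points of $\overline{X}$ and feeds in the Hasse reciprocity sequence $0\to\operatorname{Br}(F)\to\bigoplus_v\operatorname{Br}(F_v)\to\Q/\Z\to 0$ (with the sum over \emph{all} places, real ones contributing $\tfrac12\Z/\Z$), which is what pins down $H^3(\overline{X},\mathbf{G}_m)\cong\Q/\Z$ and kills $H^2$. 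Either carry out the naturality verification for your comparison, or replace the extension analysis by this reciprocity argument; as written, degrees $1$ through $3$ are not proved.
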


\noindent The cohomology of $\mu_{n, \overline{X}}$ and $\Z/n\Z_{\overline{X}}$ will be given in the next subsection. To compute these, we should use Artin-Verdier duality for an arbitrary number field.

\subsection{Artin-Verdier duality}

Let $\mathcal{F} \in \mathbf{Ab}_{U}$ be a non-empty open subset $U$ of $X$. For each $v \in X_{\infty}$, $\mathcal{F}_{\eta}$ is an $I_v$-module and the norm map $N_v$ on $\mathcal{F}_{\eta}$ induces a map $N_v : H_0(I_v, \mathcal{F}_{\eta}) \rightarrow H^0(I_v, \mathcal{F}_{\eta})$. 

\begin{definition}
The \textbf{modified sheaf} of $\mathcal{F} \in \mathbf{Ab}_{U}$ is defined by 
$$ \widehat{\mathcal{F}} := (  \big\{  H_0(I_v, \mathcal{F}_{\eta}) \big\}_{v \in X_{\infty}} , \mathcal{F}, \big\{ H_0(I_v, \mathcal{F}_{\eta}) \xrightarrow{N_v} H^0(I_v, \mathcal{F}_{\eta}) \big\}_{v \in X_{\infty}}  ) \in \mathbf{Ab}_{\overline{U}}. $$
\end{definition}

\begin{example} \label{exA8}
(1) Since $\displaystyle \mathbf{G}_{m, U, \eta}=\overline{F}^{\times}$ and the image of $N_v : \mathbf{G}_{m, U, \eta} \rightarrow \mathbf{G}_{m, U, \eta}$ is $F_v^{alg, +}$, 
$$H_0(I_v, \mathbf{G}_{m, U, \eta}) \overset{N_v}{\underset{\simeq}{\longrightarrow}} F_v^{alg, +}$$
and $N_v : H_0(I_v, \mathbf{G}_{m, U, \eta}) \rightarrow H^0(I_v, \mathbf{G}_{m, U, \eta})$ is identified with the inclusion map $F_v^{alg, +} \xhookrightarrow{} F_v^{alg, \times}$. Thus $\widehat{\mathbf{G}_{m, U}}=\mathbf{G}_{m, \overline{U}}$.

\noindent (2) Let $A$ be an abelian group. It is easy to show that $ \widehat{A_U} = ( \big\{ A \big\} , A_U, \big\{ A\overset{2}{\rightarrow} A \big\}) \in \mathbf{Ab}_{\overline{U}}$, which is not equal to $A_{\overline{U}}$ in general. However, there is a natural morphism $\widetilde{f} =( \big\{ A\overset{2}{\rightarrow} A \big\} , id_{A_U}) : \widehat{A_U} \rightarrow  A_{\overline{U}}$ and it induces an isomorphism $H^p(\overline{U}, \widehat{A_U}) \rightarrow  H^p(\overline{U}, A_{\overline{U}})$ for each $p \geq 1$ by the following lemma. 
\end{example}

\begin{lemma} \label{lemA9}
Let $\widetilde{ \mathcal{F} }=(\big\{ \widetilde{ \mathcal{F} }_{\widetilde{v}} \big\}, \mathcal{F}, \big\{ \varphi_{\widetilde{v}} \big\})$ and $\widetilde{ \mathcal{G} }=(\big\{ \widetilde{ \mathcal{G} }_{\widetilde{v}} \big\}, \mathcal{G}, \big\{ \psi_{\widetilde{v}} \big\})$ be abelian sheaves on $\overline{U}$ and $\widetilde{f} : \widetilde{\mathcal{F}} \rightarrow \widetilde{\mathcal{G}}$ be a morphism in $\mathbf{Ab}_{\overline{U}}$ whose kernel and cokernel are supported at real points. (Equivalently, $f : \mathcal{F} \rightarrow \mathcal{G}$ is an isomorphism in $\mathbf{Ab}_{U}$.) Then for each $p \geq 1$, the map $H^p(\overline{U}, \widetilde{\mathcal{F}}) \rightarrow  H^p(\overline{U}, \widetilde{\mathcal{G}})$ induced by $\widetilde{f}$ is an isomorphism. 
\end{lemma}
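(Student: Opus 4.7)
The plan is to factor $\widetilde{f}$ through its image and reduce the claim to a vanishing statement for the cohomology of sheaves supported on the archimedean locus, which can be computed directly from Lemma \ref{lemA4}.

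First, let $\widetilde{\mathcal{K}} = \ker(\widetilde{f})$, $\widetilde{\mathcal{I}} = \im(\widetilde{f})$, and $\widetilde{\mathcal{C}} = \coker(\widetilde{f})$, all taken in the abelian category $\mathbf{Ab}_{\overline{U}}$. Because kernels and cokernels in $\mathfrak{Ab}_{\overline{U}}$ are computed componentwise in the triple description (as given by Definition A.2), the hypothesis that $f:\mathcal{F} \to \mathcal{G}$ is an isomorphism on $U$ implies $\widetilde{\mathcal{K}}|_U = \widetilde{\mathcal{C}}|_U = 0$; in particular, $\mathcal{K}_\eta = \mathcal{C}_\eta = 0$ and both sheaves are supported on the finite closed subset $X_\infty = \overline{U}\setminus U$. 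Factoring $\widetilde{f}$ through its image gives two short exact sequences
$$0 \to \widetilde{\mathcal{K}} \to \widetilde{\mathcal{F}} \to \widetilde{\mathcal{I}} \to 0 \quad\text{and}\quad 0 \to \widetilde{\mathcal{I}} \to \widetilde{\mathcal{G}} \to \widetilde{\mathcal{C}} \to 0$$
in $\mathbf{Ab}_{\overline{U}}$.

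The technical heart of the argument is then the following vanishing: for any $\widetilde{\mathcal{H}} \in \mathbf{Ab}_{\overline{U}}$ with $\widetilde{\mathcal{H}}|_U = 0$, one has $H^p(\overline{U}, \widetilde{\mathcal{H}}) = 0$ for every $p \geq 1$. I would prove this by applying the local cohomology sequence to the open immersion $U \hookrightarrow \overline{U}$ with closed complement $\widetilde{S} = X_\infty$: the terms $H^q(U, \widetilde{\mathcal{H}}|_U)$ are zero, so the sequence collapses to an isomorphism
$$H^p(\overline{U}, \widetilde{\mathcal{H}}) \cong \bigoplus_{v \in X_\infty} H^p_v(\overline{U}, \widetilde{\mathcal{H}}).$$
By Lemma \ref{lemA4}, each summand on the right vanishes for $p \geq 1$: the specialization target $\mathcal{H}_\eta^{I_v} = 0$ forces $\coker(\varphi_v) = 0$ in degree one, while for $p \geq 2$ one has $H^{p-1}(I_v, \mathcal{H}_\eta) = H^{p-1}(I_v, 0) = 0$.

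Finally, I conclude via the long exact sequences. For $p \geq 1$, the first short exact sequence yields
$$H^p(\overline{U}, \widetilde{\mathcal{K}}) \to H^p(\overline{U}, \widetilde{\mathcal{F}}) \to H^p(\overline{U}, \widetilde{\mathcal{I}}) \to H^{p+1}(\overline{U}, \widetilde{\mathcal{K}}),$$
whose outer terms are zero by the previous step, hence $H^p(\overline{U}, \widetilde{\mathcal{F}}) \xrightarrow{\sim} H^p(\overline{U}, \widetilde{\mathcal{I}})$; the second sequence analogously gives $H^p(\overline{U}, \widetilde{\mathcal{I}}) \xrightarrow{\sim} H^p(\overline{U}, \widetilde{\mathcal{G}})$, and composing produces the desired isomorphism $H^p(\overline{U}, \widetilde{\mathcal{F}}) \xrightarrow{\sim} H^p(\overline{U}, \widetilde{\mathcal{G}})$. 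The one place that requires care, and which I expect to be the main (if modest) obstacle, is matching the abstract factorization of $\widetilde{f}$ in $\mathbf{Ab}_{\overline{U}}$ against the concrete stalk-wise description to confirm that both $\widetilde{\mathcal{K}}$ and $\widetilde{\mathcal{C}}$ really are supported on $X_\infty$; this is essentially built into the definitions recalled earlier in the appendix.
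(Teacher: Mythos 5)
Your reduction to sheaves supported on $X_\infty$, and the vanishing $H^p(\overline{U},\widetilde{\mathcal{H}})=0$ for $p\ge 1$ when $\widetilde{\mathcal{H}}\mid_U=0$ (via the local cohomology sequence and Lemma \ref{lemA4}), are both correct, and they do give $H^p(\overline{U},\widetilde{\mathcal{F}})\xrightarrow{\sim}H^p(\overline{U},\widetilde{\mathcal{I}})$ for all $p\ge 1$ and $H^p(\overline{U},\widetilde{\mathcal{I}})\xrightarrow{\sim}H^p(\overline{U},\widetilde{\mathcal{G}})$ for all $p\ge 2$. The gap is hidden in the word ``analogously'' at $p=1$. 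The relevant portion of the second long exact sequence is
$$H^0(\overline{U},\widetilde{\mathcal{G}})\rightarrow H^0(\overline{U},\widetilde{\mathcal{C}})\xrightarrow{\ \delta\ }H^1(\overline{U},\widetilde{\mathcal{I}})\rightarrow H^1(\overline{U},\widetilde{\mathcal{G}})\rightarrow H^1(\overline{U},\widetilde{\mathcal{C}})=0,$$
and the term you must kill on the left is an $H^0$, to which your vanishing statement does not apply: $H^0(\overline{U},\widetilde{\mathcal{C}})=\prod_{v\in X_\infty}\coker(\widetilde{f}_v)$, which is nonzero whenever some $\widetilde{f}_v$ is not surjective. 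Injectivity of $H^1(\overline{U},\widetilde{\mathcal{I}})\to H^1(\overline{U},\widetilde{\mathcal{G}})$ is equivalent to surjectivity of $H^0(\overline{U},\widetilde{\mathcal{G}})\to H^0(\overline{U},\widetilde{\mathcal{C}})$, and this fails in general: a global section of $\widetilde{\mathcal{G}}$ has all its archimedean components lying over a single element of $\mathcal{G}(U)$, whereas $\prod_v\coker(\widetilde{f}_v)$ carries no such coherence constraint.

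Moreover this gap cannot be closed, because the statement itself fails at $p=1$. Take $F=\Q(\sqrt{3})$ (so $Cl(F)=0$, $Cl^+(F)\cong\Z/2\Z$, and $X_\infty$ has two elements), $U=X$, and the morphism $\widehat{\Z/2\Z_X}\to\Z/2\Z_{\overline{X}}$ of Example \ref{exA8}(2), whose kernel and cokernel are supported at the real points. For $\Z/2\Z_{\overline{X}}$ the specialization maps are identities, so $H^1_v=0$ and $H^1(\overline{X},\Z/2\Z_{\overline{X}})$ injects into the subgroup of $H^1(X,\Z/2\Z)=\Hom(Cl^+(F),\Z/2\Z)\cong\Z/2\Z$ of classes unramified at both real places; the nontrivial class is the narrow Hilbert class field $F(i)/F$, which ramifies at both real places, so $H^1(\overline{X},\Z/2\Z_{\overline{X}})=0$. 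For $\widehat{\Z/2\Z_X}$ the specialization maps are multiplication by $2$, hence zero, so $H^1_v=\Z/2\Z$ at each real place, and exactness of $H^0(X,\Z/2\Z)\to\bigoplus_v H^1_v\to H^1(\overline{X},\widehat{\Z/2\Z_X})$ forces $H^1(\overline{X},\widehat{\Z/2\Z_X})$ to contain a quotient of $(\Z/2\Z)^2$ by a group of order at most $2$, hence to be nonzero (indeed it is $H^1_c(X,\Z/2\Z)\cong\Z/2\Z$). So the two groups are not isomorphic. Your argument is complete and correct for $p\ge 2$, which is all the paper actually uses (only $p=3$ enters the definition of $\inv$ and $j^3$), but as written both your proof and the stated range $p\ge 1$ overreach at $p=1$.
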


\begin{definition}
Let $\mathcal{F} \in \mathbf{Ab}_{U}$, $j : U \rightarrow X$ be an open immersion, $X_{\infty} = \left \{ v_1, \cdots, v_r \right \}$, $p \in \Z$ and $H^p_T$ denote the $p$-th Tate cohomology. \\
(1) The $p$-th \textbf{modified \'etale cohomology groups} of $\mathcal{F}$ are 
$$\widehat{H}^p(U, \mathcal{F}) :=
\left\{\begin{matrix}
H^p(\overline{U}, \widehat{\mathcal{F}})  & (p \geq 0)\\ 
\bigoplus ^r _{k=1} H^{p-1}_T (I_{v_k}, \mathcal{F}_{\eta})  & (p<0)
\end{matrix}\right. .$$
(2) The $p$-th \textbf{compactly supported cohomology groups} of $\mathcal{F}$ are $H^p_c(U, \mathcal{F}) := \widehat{H}^p(X, j_! \mathcal{F})$.
\end{definition}

\begin{example} \label{exA11}
(\cite[Example 5.4.2]{CM}) A natural morphism $\widehat{j_! \mathbf{G}_{m, U}} \rightarrow \mathbf{G}_{m, \overline{X}}$ in $\mathbf{Ab}_{\overline{X}}$ induces an isomorphism $H^3_c(U, \mathbf{G}_{m, U})=H^3(\overline{X}, \widehat{j_! \mathbf{G}_{m, U}}) \xrightarrow{\simeq} H^3(\overline{X}, \mathbf{G}_{m, \overline{X}}) \cong \Q/\Z$. 
\end{example}

\begin{theorem} \label{thmA12}
(\cite[Theorem 5.4.4]{CM}; Artin-Verdier duality) Let $\mathcal{F}$ be a constructible abelian \'etale sheaf on a dense open subset $U$ of $X$. The Yoneda pairing 
\begin{equation*}
H^p_c(U,\mathcal{F}) \times Ext_U^{3-p}(\mathcal{F}, \mathbf{G}_{m,U}) \rightarrow  H^3_c(U, \mathbf{G}_{m,U}) \cong \Q/\Z
\end{equation*}
is a perfect pairing of finite abelian groups for all $p \in \Z$. ($Ext^i_U$ is defined to be $0$ for $i<0$, so $H^p_c(U, \mathcal{F})=0$ if $p>3$.)
\end{theorem}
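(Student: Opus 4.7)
The plan is to deduce the duality from the trace isomorphism of Example \ref{exA11} together with a dévissage argument that mirrors Milne's proof in the totally imaginary case, with the modified sheaf construction $\widehat{\mathcal{F}}$ carrying the real-place data. Using the local cohomology long exact sequence on $\overline{U}$, one sees that $H^p_c(U, -)$ is a cohomological $\delta$-functor on constructible sheaves, while $\Ext^{3-p}_U(-, \mathbf{G}_{m,U})$ is one in the other variable. Cup product against the trace $H^3_c(U, \mathbf{G}_{m,U}) \xrightarrow{\sim} \Q/\Z$ of Example \ref{exA11} therefore assembles to a natural transformation of $\delta$-functors
$$
\Theta^p_{\mathcal{F}} : \Ext^{3-p}_U(\mathcal{F}, \mathbf{G}_{m,U}) \longrightarrow H^p_c(U, \mathcal{F})^D,
$$
and it suffices to prove $\Theta^p_{\mathcal{F}}$ is an isomorphism for all $p \in \Z$ and all constructible $\mathcal{F}$.

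To execute the dévissage, I would first shrink $U$: since $\mathcal{F}$ is constructible, there is a dense open $j : V \hookrightarrow U$ with closed complement $i : Z \hookrightarrow U$ such that $j^*\mathcal{F}$ is locally constant. The exact sequence $0 \to j_! j^* \mathcal{F} \to \mathcal{F} \to i_* i^* \mathcal{F} \to 0$ together with a five-lemma argument reduces the claim to two cases: (a) $\mathcal{F} = j_! \mathcal{G}$ with $\mathcal{G}$ locally constant constructible on a small $V$ where $n = |\mathcal{G}_{\bar\eta}|$ is invertible, and (b) skyscraper sheaves $i_*\mathcal{H}$ on a finite closed subscheme $Z$ of $U$. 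Case (b) is handled by local Tate duality at finite primes (via the identification $H^p_{x}(\overline{U}, \widehat{\mathcal{F}})$ in terms of $H^*(\pi_x, \mathcal{F}_{\bar{x}})$). For case (a), choose a finite étale cover $\pi : V' \to V$ trivializing $\mathcal{G}$, write $\mathcal{G}$ as a subsheaf or quotient of $\pi_* \pi^* \mathcal{G}$, and invoke the projection formula plus the equality $\Ext^*_V(\pi_* \Z/n\Z, \mathbf{G}_m) = \Ext^*_{V'}(\Z/n\Z, \pi^!\mathbf{G}_m)$ to reduce to a constant sheaf $\Z/n\Z_{V'}$. The base case $\Z/n\Z_{V'}$ is then settled by the Kummer sequence and the Cartier-dual identification $\underline{\Ext}^*(\Z/n\Z, \mathbf{G}_m) \cong \mu_n[-\text{shifts}]$, combined with the $\mathbf{G}_m$-cohomology calculation of Proposition \ref{propA6}.

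The main obstacle, and the point where this proof departs from the totally imaginary treatment, is encoding the archimedean contributions correctly. At a real place $v$ the stalk of $\widehat{\mathcal{F}}$ is $H_0(I_v, \mathcal{F}_\eta)$ glued to $\mathcal{F}_\eta^{I_v}$ via the norm $N_v$, and by Lemma \ref{lemA4} the local cohomology $H^p_v(\overline{U}, \widehat{\mathcal{F}})$ is expressed in terms of Tate cohomology $\widehat{H}^{p-1}(I_v, \mathcal{F}_\eta)$ for $p \geq 2$. The heart of the proof is to check that the local archimedean pairings
$$
\widehat{H}^{p-1}(I_v, \mathcal{F}_\eta) \times \widehat{H}^{2-p}(I_v, \mathcal{F}_\eta^{\vee}) \longrightarrow \widehat{H}^1(I_v, \overline{F_v}^\times) \xrightarrow{\inv_v} \tfrac{1}{2}\Z/\Z
$$
are perfect (standard Tate duality for $I_v = \Z/2\Z$) and splice compatibly with the Poitou-Tate nine-term sequence to yield the global pairing. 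The modified sheaf $\widehat{\mathcal{F}}$ is engineered precisely so that the factor $N_v$ enters both sides of the global pairing with matching signs, and verifying this compatibility --- equivalently, that the comparison $\widehat{j_! \mathbf{G}_{m,U}} \to \mathbf{G}_{m, \overline{X}}$ of Example \ref{exA11} intertwines the Yoneda cup product with the local trace maps --- is the delicate bookkeeping on which the whole theorem rests.
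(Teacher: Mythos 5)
The paper does not prove this statement: Theorem \ref{thmA12} is quoted verbatim from Conrad--Masullo \cite{CM} (Theorem 5.4.4 there), so there is no internal proof to compare yours against. Judged against the actual proof in the cited sources (\cite{CM}, \cite{ZINK}, and Milne's totally imaginary case in \cite{ADT}), your outline follows the same strategy --- build the map $\Theta^p$ from the trace isomorphism of Example \ref{exA11}, d\'evissage via $0 \to j_!j^*\mathcal{F} \to \mathcal{F} \to i_*i^*\mathcal{F} \to 0$ to skyscrapers and to $j_!$ of locally constant sheaves, reduce the locally constant case to $\Z/n\Z$ by a finite \'etale cover, settle the base case by Kummer theory and the $\mathbf{G}_m$-computation, and handle the real places by Tate duality for $I_v \cong \Z/2\Z$ --- so at the level of architecture it is faithful.

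Two imprecisions are worth flagging. First, the skyscraper case (b) at a closed point $x \in U$ is not ``local Tate duality at finite primes'': one has $H^p_c(U, i_*\mathcal{H}) \cong H^p(x, \mathcal{H})$ and $\Ext^{3-p}_U(i_*\mathcal{H}, \mathbf{G}_m) \cong \Ext^{2-p}_{x}(\mathcal{H}, \Z)$, and the relevant duality is that for Galois cohomology of the finite residue field (i.e., of $\widehat{\Z}$), not of the local field $F_x$. Second, your sketch does not explicitly address the range that is genuinely new in the presence of real places, namely $p < 0$ and the corresponding $\Ext^{3-p}_U$ with $3-p > 3$: there $H^p_c(U,\mathcal{F})$ is by definition the direct sum of Tate cohomology groups $\bigoplus_v H^{p-1}_T(I_v, \mathcal{F}_\eta)$ and the high-degree Ext groups are nonzero, $2$-torsion, and periodic, so the perfectness in unbounded degrees rests entirely on the archimedean Tate duality and its compatibility with the Yoneda product --- exactly the ``delicate bookkeeping'' you identify at the end, which is indeed where the real content lies and which a complete proof would have to carry out rather than assert.
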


\noindent By Theorem \ref{thm23} and Theorem \ref{thmA12}, two different definitions of compactly supported \'etale cohomology in Section \ref{Sec2} and Appendix \ref{appnA} are naturally identified. For the isomorphism 
$$
\inv : H^3(\overline{X}, \Z/n\Z) \xleftarrow{\simeq} H^3_c(X, \Z/n\Z) \xrightarrow{\simeq} \mu_n(F)^D
$$
and the map 
$$
j^3 : H^3(\pi_{un}, \Z/n\Z) \xrightarrow{j^3_{un}} H^3_c(X, \Z/n\Z) \xrightarrow{\simeq} H^3(\overline{X}, \Z/n\Z),
$$
the arithmetic Chern-Simons action can be expressed as $CS_c([\rho]) = \inv(j^3(\rho^*(c)))$. 
\end{appendices}

\section*{Acknowledgments}
Jungin Lee was supported by a KIAS Individual Grant (SP079601) via the Center for Mathematical Challenges at Korea Institute for Advanced Study.
Jeehoon Park was supported by Samsung Science \& Technology Foundation SSTF-BA1502, 
the National Research Foundation of Korea (NRF-2021R1A2C1006696) and the National Research Foundation of Korea (NRF) grant funded by the Korea government (MSIT) (No.2020R1A5A1016126). 
 The second author thanks Hwajong Yoo for providing useful comments on the draft.


\vspace{3mm}

\footnotesize{
\textsc{Jungin Lee: Center for Mathematical Challenges, Korea Institute for Advanced Study, Seoul 02455, Republic of Korea} 

\textit{E-mail address}: \changeurlcolor{black}\href{mailto:jilee.math@gmail.com}{jilee.math@gmail.com} 

\vspace{3mm}

\textsc{Jeehoon Park:  QSMS, Seoul National University, 1 Gwanak-ro, Gwanak-gu, Seoul 08826, Republic of Korea }

\textit{E-mail address}: \changeurlcolor{black}\href{mailto:jpark.math@gmail.com}{jpark.math@gmail.com}
}

\end{document}